\title[Residues on and dualizing sheaves of arithmetic surfaces]{An explicit approach to residues on and dualizing sheaves of arithmetic surfaces} 
\author{Matthew Morrow}
\address{University of Chicago,\newline Dept. of Mathematics,\newline 5734 S. University Avenue,\newline Chicago, IL, 60637,\newline USA} 
\email{mmorrow@math.uchicago.edu\newline \url{http://math.uchicago.edu/~mmorrow}}   
\thanks{Much of the work in this paper was done while I was the recipient of the Cecil King Travel prize, through the London Mathematical Society; I would like to thank the Cecil King Foundation for their generosity.\newline\indent This paper has been published by the New York Journal of Mathematics, vol. 16, 2010, pp. 575 -- 627, and is available at \url{http://nyjm.albany.edu/j/2010/16-25.html}.\newline\indent The sequel to this paper is \cite{Morrow2010}. \newline\indent The earlier version of the paper, entitled ``An explicit approach to residues on and {\em canonical} sheaves of arithmetic surfaces'', is now redundant.} 
\keywords{Residues; Reciprocity laws; Arithmetic surfaces; Grothendieck duality}
\subjclass{14H25 (Primary), 14B15 14F10 (Secondary)}
\newtheorem{theorem}{Theorem}[section]
\newtheorem{lemma}[theorem]{Lemma}
\newtheorem{corollary}[theorem]{Corollary}
\newtheorem{proposition}[theorem]{Proposition}
\theoremstyle{definition}
\newtheorem{definition}[theorem]{Definition}
\newtheorem{remark}[theorem]{Remark}
\newtheorem{example}[theorem]{Example}
\renewcommand{\hat}{\widehat}
\renewcommand{\tilde}{\widetilde}
\newcommand{\bb}{\mathbb}
\newcommand{\into}{\hookrightarrow}
\newcommand{\onto}{\to\!\!\!\!\!\to}
\newcommand{\To}{\longrightarrow}
\newcommand{\sub}[1]{{\mbox{\scriptsize #1}}}
\newcommand{\res}{\overline}
\newcommand{\roi}{\mathcal{O}}
\newcommand{\al}{\alpha}
\newcommand{\ep}{\varepsilon}
\newcommand{\mult}[1]{#1^{\!\times}}
\renewcommand{\frak}{\mathfrak}
\newcommand{\cal}{\mathcal}
\newcommand{\dblecurly}[1]{\{\!\{ #1 \}\!\}}
\newcommand{\dd}[2]{\frac{\partial #1}{\partial #2}}
\newcommand{\omegaa}{\mbox{\begin{large}$\omega$\end{large}}}
\newcommand{\isoto}{\stackrel{\simeq}{\to}}
\DeclareMathOperator{\Frac}{Frac}
\DeclareMathOperator{\Gal}{Gal}
\DeclareMathOperator{\Spec}{Spec}
\DeclareMathOperator{\RES}{res}
\DeclareMathOperator{\RRES}{Res}
\DeclareMathOperator{\Hom}{Hom}
\DeclareMathOperator{\Tr}{Tr}
\DeclareMathOperator{\Tors}{Tors}
\begin{document} 

\begin{abstract}  
We develop a theory of residues for arithmetic surfaces, establish the reciprocity law around a point, and use the residue maps to explicitly construct the dualizing sheaf of the surface. These are generalisations of known results for surfaces over a perfect field. In an appendix, explicit local ramification theory is used to recover the fact that in the case of a local complete intersection the dualizing and canonical sheaves coincide.
\end{abstract} 
\maketitle
\tableofcontents


\section{Introduction}
This paper studies arithmetic surfaces using two-dimensional local fields associated to the scheme, and thus further develops the ad\`elic approach to higher dimensional algebraic and arithmetic geometry. We study residues of differential forms and give an explicit construction of the relative dualizing sheaf. While considerable work on these topics has been done for varieties over perfect fields by Lipman, Lomadze, Parshin, Osipov, Yekutieli, et al., the arithmetic case has been largely ignored. After summarising the contents of the paper, we discuss its relation to this earlier work and provide references.

In section \ref{section_local_relative_residues} we consider a two-dimensional local field $F$ of characteristic zero and a fixed local field $K\le F$. We introduce a relative residue map \[\RRES_F:\Omega_{F/K}^\sub{cts}\to K,\] where $\Omega_{F/K}^\sub{cts}$ is a suitable space of `continuous' relative differential forms. In the case $F\cong K((t))$, this is the usual residue map; but if $F$ is of mixed characteristic, then our residue map is new (though essentially contained in Fesenko's ad\`elic analysis and Osipvov's study of surfaces: see the discussion below). Functoriality of the residue map is established with respect to a finite extension $F'/F$, i.e. \[\RRES_F\Tr_{F'/F}=\RRES_{F'}.\]

In section \ref{section_reciprocity_for_rings} we prove the reciprocity law for two-dimensional local rings, justifying our definition of the relative residue map for mixed characteristic fields. For example, suppose $A$ is a characteristic zero, two-dimensional, normal, complete local ring with finite residue field, and fix the ring of integers of a local field $\roi_K\le A$. To each height one prime $y\lhd A$ one associates the two-dimensional local field $\Frac\hat{A_y}$ and thus obtains a residue map $\RRES_y:\Omega_{\Frac A/K}^1\to K$. We prove \[\sum_y\RRES_y\omega=0\] for all $\omega\in\Omega_{\Frac A/K}^1$. The next section restates these results in the geometric language.

Section \ref{section_explicit_dualizing_sheaf_of_arith_surface} is independent of the main results of sections \ref{section_reciprocity_for_rings} and \ref{section_reciprocity_for_surfaces}, using the local residue maps for a different purpose, namely to explicitly construct the relative dualizing sheaf of an arithmetic surface. See subsection \ref{subsection_explicit_grothendieck_duality} below for a reminder on dualizing sheaves. Let $\roi_K$ be a Dedekind domain of characteristic zero with finite residue fields; its field of fractions is $K$. Let $\pi:X\to S=\Spec\roi_K$ be an arithmetic surface (more precisely, $X$ is normal and $\pi$ is flat and projective, with the generic fibre being a smooth curve). To each closed point $x\in X$ and integral curve $y\subset X$ containing $x$, our local residue maps define $\RRES_{x,y}:\Omega_{K(X)/K}^1\to K_{\pi(x)}\;(=\pi(x)\mbox{-adic completion of }K)$, and we prove the following:

\begin{theorem}
The dualizing sheaf $\omegaa_\pi$ of $\pi:X\to S$ is explicitly given by, for open $U\subseteq X$, \begin{align*}\omegaa_{\pi}(U)=\{\omega\in\Omega_{K(X)/K}^1:&\RRES_{x,y}(f\omega)\in\hat{\roi}_{K,\pi(x)}\mbox{ for}\\ &\mbox{all } x\in y\subset U\mbox{ and }f\in\roi_{X,y}\}\end{align*} where $x$ runs over all closed points of $X$ inside $U$ and $y$ runs over all curves containing $x$.
\end{theorem}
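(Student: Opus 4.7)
The strategy is to reduce the global statement to a local one at each closed point $x \in X$, pass to the completed local ring, and identify the completed stalk with the relative dualizing module via the residue-based duality from section \ref{section_local_relative_residues}. Since the defining condition of the right-hand side is purely local at pairs $(x,y)$, no reciprocity law enters, which is consistent with the fact that this section is independent of sections \ref{section_reciprocity_for_rings} and \ref{section_reciprocity_for_surfaces}.

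\medskip

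Denote by $\omega'$ the subsheaf of the constant sheaf $\Omega^1_{K(X)/K}$ defined by the displayed formula; the submodule property is immediate from $\roi_{X,y}$-linearity of $\RRES_{x,y}$ in its argument. After verifying quasi-coherence by a standard local argument, it suffices to check stalkwise, and in fact after completion at each closed point $x$, that $\omega'$ agrees with $\omegaa_\pi$.

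\medskip

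Fix such an $x$ with image $s = \pi(x) \in S$, and set $A = \hat{\roi_{X,x}}$, $B = \hat{\roi_{K,s}}$. The completed stalk of $\omegaa_\pi$ is the relative dualizing module $\omegaa_{A/B}$ of the flat local extension $B \to A$; since $A$ is normal of dimension two and hence Cohen-Macaulay, $\omegaa_{A/B}$ is a reflexive $A$-module. So is the completed version of $\omega'$, being cut out by conditions at each height-one prime inside the generically free module $\Omega^1_{\Frac A / K}$. It therefore suffices to check equality after localising at each height-one prime $y \lhd A$. There, $\hat{A_y}$ is a complete discrete valuation ring whose fraction field is a two-dimensional local field $F$, and the problem reduces to the local statement: $\omega \in \Omega_{F/K}^\sub{cts}$ satisfies $\RRES_F(f\omega) \in B$ for all $f \in \hat{A_y}$ if and only if $\omega \in \omegaa_{\hat{A_y}/B}$.

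\medskip

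The main obstacle is this last local identification, which amounts to recognising the residue pairing $(f,\omega) \mapsto \RRES_F(f\omega)$ as the trace pairing coming from Grothendieck duality for the finite extension $B \hookrightarrow \hat{A_y}$. In the equal characteristic case $F \cong K((t))$ this is classical; the mixed-characteristic case follows by reduction to the equal-characteristic case via the trace functoriality $\RRES_F \Tr_{F'/F} = \RRES_{F'}$ established in section \ref{section_local_relative_residues}, combined with the known behaviour of dualizing modules under finite extensions.
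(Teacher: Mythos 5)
The proposal has a genuine gap, and it sits at the heart of the argument. You propose to complete at a closed point $x$, reduce by reflexivity to each height-one prime $y\lhd A=\hat{\roi_{X,x}}$, and then identify the residue pairing with ``the trace pairing coming from Grothendieck duality for the finite extension $B\hookrightarrow\hat{A_y}$,'' where $B=\hat{\roi}_{K,\pi(x)}$. But this extension is \emph{not} finite: $B$ is a complete discrete valuation ring with finite residue field, whereas $\hat{A_y}$ is a complete discrete valuation ring whose residue field $\res{F_y}$ is a local field, hence infinite over $\res{B}$. There is therefore no finite-morphism duality and no trace form $\Tr_{\Frac\hat{A_y}/\Frac B}$ available; indeed $\Frac\hat{A_y}$ is a two-dimensional local field and $\Frac B$ is a one-dimensional one, so the ``trace'' would have to be a relative residue, which is exactly what you are trying to prove something about. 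The same problem already appears one level up: $B\to A$ is flat and local but has two-dimensional fibre, so ``the relative dualizing module $\omegaa_{A/B}$'' is not the codifferent of a finite extension and needs a separate construction before one can compare it to the stalk of $\omegaa_\pi$.

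The paper circumvents precisely this by supplying the missing finiteness: lemma \ref{lemma_normalization_result} produces, locally on $S$, a \emph{finite} $S$-morphism $\rho:\pi^{-1}(U)\to\bb{P}_U^1$. One then writes $\omegaa_\pi\cong\omegaa_\rho\otimes\rho^\ast\Omega_{\bb{P}^1_R/R}$ (lemma \ref{lemma_explicit_formula_using_conductor}), so that the dualizing sheaf becomes a codifferent $\frak{C}(A/R[T])\,dT$ of a genuine finite extension $R[T]\hookrightarrow A$; the residue-theoretic identity $W_{A/\roi_K}=\frak{C}(A/R[T])\,dT$ (theorem \ref{main_theorem_affine_case}) is then proved by pushing forward via $\Tr_{F/M}$ to the smooth case $B\cong\roi_K[T]$ and doing a direct calculation there, including an explicit treatment of the mixed-characteristic prime $\pi B$ via the $K\dblecurly{t}$ model. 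Your sketch contains neither the finite morphism nor a substitute for it, and ``reduction to equal characteristic via trace functoriality'' is not enough: the mixed-characteristic primes contribute a genuinely different local model and are handled in the paper by a separate explicit argument, not by a formal reduction. To repair the proposal you would need either to introduce a finite projection as the paper does, or to set up Grothendieck duality for the non-finite morphism $\Spec A\to\Spec B$ (e.g.\ via $f^!$ and local cohomology) and prove a local duality statement for two-dimensional local fields relative to a one-dimensional base — a substantially larger undertaking than the sketch acknowledges.
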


Appendix \ref{section_finite_morphisms_etc} establishes a local ramification result, generalising a classical formula for the different of an extension of local fields. Let $B$ be a Noetherian, normal ring of characteristic zero, and \[A=B[T_1,\dots,T_m]/\langle f_1,\dots,f_m\rangle\] a normal, complete intersection over $B$ which is a finitely generated $B$-module. Letting $J\in A$ be the determinant of the Jacobian matrix, we prove that \[\{x\in F:\Tr_{F/M}(xA)\subseteq B\}=J^{-1}A.\] (see theorem \ref{theorem_different_and_jacobian_in_general} for the more precise statement). In other words, the canonical and dualizing sheaves of $A/B$ are the same. The proof reduces to the case when $A$, $B$ are complete discrete valuation rings with an inseparable residue field extension; for more on the ramification theory of complete discrete valuation fields with imperfect residue field, see \cite{Abbes2002} \cite{Abbes2003} \cite{Xiao2007} \cite{Xiao2008a} \cite{Xiao2008b} \cite{Zhukov2000} \cite{Zhukov2003}. From this result one can easily recover the fact that if the above arithmetic surface $X\to S$ is a local complete intersection, then its canonical and dualizing sheaves coincide.

As much for author's benefit as that of the reader, let us now say a few words about the relation of this work to previous results of others:

\subsection{An introduction to the higher ad\`elic method}
A two-dimensional local field is a compete discrete valuation field whose residue field is a local field (e.g. $\bb{Q}_p((t))$); for an introduction to such fields, see \cite{IHLF}. If $A$ is a two-dimensional domain, finitely generated over $\bb{Z}$, with field of fractions $F$ and $0\lhd\frak{p}\lhd\frak{m}\lhd A$ is a chain of primes in $A$, then consider the following sequence of localisations and completions:
\[\begin{array}{ccccccccccc}
A&\!\leadsto\!&A_{\frak{m}}&\!\leadsto\!&\hat{A_{\frak{m}}}&\!\leadsto\!&\left(\hat{A_{\frak{m}}}\right)_{\frak{p}'}&\!\leadsto\!&\hat{\left(\hat{A_{\frak{m}}}\right)_{\frak{p}'}}&\!\leadsto\!&\mbox{Frac}\left(\hat{\left(\hat{A_{\frak{m}}}\right)_{\frak{p}'}}\right)\\
&&&&&&&&\parallel&&\parallel\\
&&&&&&&&A_{\frak{m},\frak{p}}&&F_{\frak{m},\frak{p}}
\end{array}\]
which we now explain in greater detail. It follows from the excellence of $A$ that $\frak{p}':=\frak{p}\hat{A_{\frak{m}}}$ is a radical ideal of $\hat{A_{\frak{m}}}$; we may localise and complete at $\frak{p}'$ and again use excellence to deduce that $0$ is a radical ideal in the resulting ring i.e. $A_{\frak{m},\frak{p}}$ is reduced. The total field of fractions $F_{\frak{m},\frak{p}}$ is therefore isomorphic to a finite direct sum of fields, and each is a two-dimensional local field.

Geometrically, if $X$ is a two-dimensional, integral scheme of finite type over $\Spec\bb{Z}$ with function field $F$, then to each closed point $x\in X$ and integral curve $y\subset X$ which contains $x$, one obtains a finite direct sum of two-dimensional local fields $F_{x,y}$. Two-dimensional ad\`elic theory aims to study $X$ via the family $(F_{x,y})_{x,y}$, in the same way as one studies a curve or number field via its completions. Analogous constructions exist in higher dimensions. Useful references are \cite{Hubl1996} \cite[\S1]{Parshin1983}.

\subsection{The classical case of a curve over a perfect field}
This paper is based closely on similar classical results for curves and it will be useful to give a detailed account of that theory.

\subsubsection*{Smooth curves}
Firstly, let $C$ be a smooth, connected, projective curve over a perfect field $k$ (of finite characteristic, to avoid complications with differential forms). We follow the discussion in \cite[III.7.14]{Hartshorne1977}. For each closed point $x\in C$ one defines the residue map $\RRES_x:\Omega_{K(C)/k}^1\to k$, and one then proves the reciprocity law \[\sum_{x\in C_0}\RRES_x(\omega)=0\] for all $\omega\in\Omega_{K(C)/k}^1$. Consider $\Omega_{K(C)/k}^1$ as a constant sheaf on $C$; then \[0\to\Omega_{C/k}^1\to\Omega_{K(C)/k}^1\to\Omega_{K(C)/k}^1/\Omega_{C/k}^1\to 0\] is a flasque resolution of $\Omega_{C/k}^1$, and the corresponding long exact sequence of \v{C}ech cohomology is \begin{equation}0\to\Omega_{C/k}^1(C)\to\Omega_{K(C)/k}^1\to\bigoplus_{x\in C_0}\frac{\Omega_{K(C)/k}^1}{\Omega_{\roi_{C,x}/k}^1}\to H^1(C,\Omega_{C/k}^1)\to 0.\label{1}\end{equation} Now, the map $\sum_x\RRES_x:\bigoplus_{x\in C_0}\Omega_{K(C)/k}^1/\Omega_{\roi_{C,x}/k}^1\to k$ vanishes on the image of $\Omega_{K(C)/k}^1$ (by the reciprocity law), and so induces \[\mbox{tr}_{C/k}:H^1(C,\Omega_{C/k}^1)\to k,\] which is the trace map of $C/k$ with respect to the dualizing sheaf $\Omega_{C/k}^1$.

Moreover, duality of $C$ may be interpreted (and proved) ad\`elically as follows; see \cite[II.\S8]{Serre1988}. For each $x\in C_0$, let $K(C)_x$ be the completion of $K(C)$ at the discrete valuation $\nu_x$ associated to $x$, and let \[\bb{A}_C=\{(f_x)\in\prod_{x\in C_0} K(C)_x:\,\nu_x(f_x)\ge0\mbox{ for all but finitely many }x\}\] be the ad\`elic space of $C$. Also, let \[\bb{A}(\Omega_{C/k}^1)=\{(\omega_x)\in\prod_{x\in C_0}\Omega_{K(C)_x/k}^1:\,\nu_x(\omega_x)\ge0\mbox{ for all but finitely many }x\}\] be the differential ad\`elic space of $C$. Then, under the pairing \[\bb{A}_C\times\bb{A}(\Omega_{C/k}^1)\to k,\quad((f_x),(\omega_x))\mapsto\sum_{x\in C_0}\RRES_x(f_x\omega_x),\] the orthogonal complement of $\bb{A}(\Omega_{C/k}^1(D))$ is \[\bb{A}(\Omega_{C/k}^1(D))^{\perp}=\bb{A}_C(D).\] Here $D$ is a divisor on $C$, and $\bb{A}_C(D)$ (resp. $\bb{A}(\Omega_{C/k}^1(D))$) is the subgroup of $\bb{A}_C$ (resp. $\bb{A}(\Omega_{C/k}^1$) for which $\nu_x(f_x)\ge -\nu_x(D)$ (resp. $\nu_x(\omega_x)\ge\nu_x(D)$) for all $x$. Moreover, the global elements, embedded diagonally, are self-dual: \[K(C)^{\perp}=\Omega_{K(C)/k}^1.\] The exact sequence (\ref{1}) generalises to the twisted sheaf $\Omega_{C/k}^1(D)$, and thereby provides an isomorphism \[\bb{A}(\Omega_{C/k}^1)/(\Omega_{K(C)/k}^1+\bb{A}(\Omega_{C/k}^1(D)))\cong H^1(C,\Omega_{C/k}^1(D));\] combining this with the aforementioned ad\`elic dualities yields the non-degenerate pairing \[\cal{L}(D)\times H^1(C,\Omega_{C/k}^1(D))\to k,\] where \[\cal{L}(D):=K(C)\cap\bb{A}_C(D)=\{f\in K(C):\nu_x(f)\ge -\nu_x(D)\mbox{ for all }x\in C_0\}.\] This is exactly duality of $C/k$.

\subsubsection*{Singular curves}
Secondly, suppose that $C$ is allowed to have singularities; we now follow \cite[IV.\S3]{Serre1988}. One may still define a residue map at each closed point $x$; in fact, if $\pi:\tilde{C}\to C$ is the normalisation of $C$, then \[\RRES_x=\sum_{x'\in\pi^{-1}(x)}\RRES_{x'}.\] The sheaf of {\em regular differentials} $\Omega_{C/k}'$ is defined, for open $U\subseteq X$, by \begin{align*}\Omega_{C/k}'(U)=\{\omega\in\Omega_{K(C)/k}^1:&\RRES_x(f\omega)=0\mbox{ for all}\\ &\mbox{closed points } x\in U\mbox{ and all }f\in\roi_{C,x}\}.\end{align*} If $U$ contains no singular points of $C$, then $\Omega_{C/k}'|_U=\Omega_{U/k}^1$. By establishing a Riemann-Roch type result, it follows that $\Omega_{C/k}'$ is the dualizing sheaf of $C/k$. Analogously to the smooth case, one explicitly constructs the trace map \[\mbox{tr}_{C/k}:H^1(C,\Omega_{C/k}')\to k,\] and, as in \cite{Green1988}, uses it and ad\`elic spaces to prove duality. See \cite{Stohr1993} for more on the theory of regular differentials on curves.

\subsection{The case of a surface over a perfect field}\label{subsection_surface_over_perfect_field}
There is also a theory of residues on algebraic surfaces, developed by A. Parshin \cite{Parshin1983} \cite{Parshin2000}, the founder of the higher dimensional ad\`elic approach to algebraic geometry. Let $X$ be a connected, smooth, projective surface over a perfect field $k$. To each closed point $x\in X$ and curve $y\subset X$ containing $x$, he defined a two-dimensional residue map \[\RRES_{x,y}:\Omega_{K(X)/k}^2\to k\] and proved the reciprocity laws both around a point \[\sum_{\substack{y\subset X\\y\ni x}}\RRES_{x,y}\omega=0\] (for fixed $x\in X_0$ and $\omega\in\Omega_{K(X)/k}^2$) and along a curve \[\sum_{\substack{x\in X_0\\x\in y}}\RRES_{x,y}\omega=0\] (for fixed $y\subset X$ and $\omega\in\Omega_{K(X)/k}^2$). By interpreting the \v{C}ech cohomology of $X$ ad\`elically and proceeding analogously to the case of a curve, these residue maps may be used to explicitly construct the trace map \[\mbox{tr}_{X/k}:H^2(X,\Omega_{X/k}^2)\to k\] and, using two-dimensional ad\`elic spaces, prove duality.

D. Osipov \cite{Osipov2000} considers the algebraic analogue of our setting, with a smooth, projective surface $X$ over a perfect field $k$ and a projective morphism $f:X\to S$ to a smooth curve. To each closed point $x\in X$ and curve $y\subset X$ containing $x$, he constructs a `direct image map' \[f_{\ast}^{x,y}:\Omega_{K(X)/k}^2\to\Omega_{K(S)_s/k}^1,\] where $s=f(x)$ and $K(S)_s$ is the $s$-adic completion of $K(S)$. He establishes the reciprocity law around a point, analogous to our theorem \ref{theorem_reciprocity_around_a_point}, and the reciprocity law along a fibre, our remark \ref{remark_reciprocity_along_a_vertical_curve}. He uses the $(f_{\ast}^{x,y})_{x,y}$ to construct $f_{\ast}:H^2(X,\Omega_{X/k}^2)\to H^1(S,\Omega_{S/k}^1)$, which he proves is the trace map.

Osipov then considers multiplicative theory. Let $K_2(X)$ denote the sheafification of $X\supseteq U\mapsto K_2(\roi_X(U))$; then $H^2(X,K_2(X))\cong\mbox{CH}_2(X)$ by \cite{Bloch1974a}. Osipov defines, for each $x\in y\subset X$, homomorphisms \[f_{\ast}(\,,\,)_{x,y}:K_2(K(X))\to \mult{K(S)}_s,\] and establishes the reciprocity laws around a point and along a fibre. At least when $\mbox{char }k=0$, these are then used to construct a map \[\mbox{CH}^2(X)=H^2(X,K_2(X))\to H^1(C,\mult{\roi}_C)=\mbox{Pic}(C),\] which is proved to be the usual push-forward of cycles \cite[\S1]{Fulton1998}.

\subsection{Higher dimensions}
The theory of residues for surfaces was extended to higher dimensional varieties by V.~G.~Lomadze \cite{Lomadze1981}. Let $X$ be a $d$-dimensional, integral variety over a field $k$. A {\em chain}, or {\em flag} $\xi$ of length $n\ge 0$ is a sequence $\langle x_0,\dots,x_n\rangle$ of irreducible, closed subvarieties of $X$ such that $x_{i-1}\subseteq x_i$ for $i=1,\dots,n$; the chain is {\em complete} if and only if $\dim x_i=i$ for all $i$. For example, if $X$ is a surface, then a complete chain has the form $\langle x\in y\subset X\rangle$ where $y$ is a curve containing a closed point $x$. To each complete flag $\xi$ Lomadze associates a residue map $\RRES_{\xi}:\Omega_{K(X)/k}^d\to k$; he proves the reciprocity law \[\sum_{x_i}\RRES_{\xi}\omega=0\] for $\omega\in\Omega_{K(X)/k}^d$. Here we have fixed a flag $x_0\subset\dots\subset x_{i-1}\subset x_{i+1}\subset\dots\subset x_n$ (with $\dim x_i=i$ for each $i$) and vary the sum over all $i$-dimensional integral subvarieties $x_i$ sitting between $x_{i-1}$ and $x_{i+1}$ (if $i=0$ then we must assume $X$ is projective). 

Lomadze also develops a higher dimensional relative theory, analogous to Osipov's study of a surface over a curve.

\subsection{Explicit Grothendieck duality}\label{subsection_explicit_grothendieck_duality}
Given a proper morphism $\pi:X\to \Spec A$ of fibre dimension $\le r$ between Noetherian schemes, with the base affine (the only case we will encounter), Grothendieck duality \cite{Grothendieck1960} assures us of the existence of a quasi-coherent sheaf $\omegaa_\pi$, called the ($r$-)dualizing sheaf, together with a homomorphism $\mbox{tr}_\pi:H^r(X,\omegaa_\pi)\to A$ with the following property: for any quasi-coherent sheaf $\cal{F}$ on $X$, the natural pairing \[\Hom_{\roi_X}(\cal{F},\omegaa_\pi)\times H^r(X,\cal{F})\To H^r(X,\omegaa_\pi)\stackrel{\mbox{tr}_\pi}{\To} A\] induces an isomorphism of $A$-modules \[\Hom_{\roi_X}(\cal{F},\omegaa_\pi)\isoto \Hom_A(H^r(X,\cal{F}),A).\] This is an elementary form of Grothendieck duality, which is typically now understood as a statement about derived categories thanks to \cite{Hartshorne1966}, while \cite{Altman1970} and \cite{Kleiman1980} provide expositions closer in spirit to the statement we have given.

It is an interesting problem whether Grothendieck duality can be made more explicit. The guiding example is that of a curve over a field which we discussed above, where the trace map $\mbox{tr}_\pi$ may be constructed via residues. The duality theorem is even equivalent to Poisson summation on the ring of ad\`eles of the curve; the simplest exposition of duality is probably that of \cite{Moreno1991}. Using the Parshin-Lomadze theory of residues, A. Yekutieli \cite{Yekutieli1992} has explicitly constructed the Grothendieck residue complex of an arbitrary reduced scheme of finite type over a field. There is additional work on explicit duality by R.~H\"ubl and E.~Kunz \cite{Hubl1990} \cite{Hubl1990a}, and R.~H\"ubl and P.~Sastry \cite{Hubl1993}. The recent book by E.~Kunz \cite{Kunz2008} gives a complete exposition of duality for projective algebraic varieties using residue maps on local cohomology groups.

There are very close analogies between certain constructions and results of this paper and those of \cite{Yekutieli1992}; indeed, in the introduction to \cite{Yekutieli1992}, Yekutieli raises the problem of extending his results to schemes over a discrete valuation ring or $\Spec\bb{Z}$, and our results provide exactly that. To make this clearer, we now provide a summary of the relevant results of \cite{Yekutieli1992}. Let $X$ be an integral variety of dimension $d$ over a perfect field $k$. To each complete chain $\xi$ on $X$, Lomadze's theory of residues provides a natural residue map $\RRES_{\xi}:\Omega_{K(X)/k}^d\to k$. Given a codimension one irreducible, closed subvariety $y\subset X$, i.e. a prime divisor, Yekutieli defines, in \cite[Definition 4.2.3]{Yekutieli1992}, a form $\omega\in\Omega_{K(X)/k}^d$ to be {\em holomorphic along $y$} if and only if \[\RRES_{\xi}(f\omega)=0\] for all $f\in\roi_{X,y}$ and all complete chains $\xi$ of the form $\xi=\langle\dots,y,X\rangle$.

Having constructed the dualizing complex $\cal{K}_X^{\bullet}$, Yekutieli introduces $\tilde{\omegaa}_X:=H^{-n}\cal{K}_X^{\bullet}$, which is naturally contained inside the constant sheaf $\Omega_{K(X)/k}^d$. He proves \cite[Theorem 4.4.16]{Yekutieli1992} the analogue of our main theorem \ref{main_theorem}, namely that for open $U\subseteq X$, \begin{align*}\tilde{\omegaa}_X(U)=\{\omega\in\Omega_{K(X)/k}^d:&\,\omega\mbox{ is holomorphic along $y$,}\\ &\mbox{for all codimension one $y$ which meet U}\}.\end{align*} Furthermore, the idea of proof is identical, following our lemmas in section \ref{section_explicit_dualizing_sheaf_of_arith_surface}: he reduces the problem to the smooth case using functoriality of $\tilde{\omegaa}_X$ with respect to finite morphisms and trace maps, and then in the smooth case proves that $\tilde{\omegaa}_X=\Omega_{X/k}^d$. From this he concludes that $\tilde{\omega}_X$ is the sheaf of regular differential forms in the sense of E.~Kunz \cite{Kunz1988}.

Assuming that $X$ is generically smooth, then the sheaf of regular differentials is the dualizing sheaf, and the Grothendieck trace map may be constructed using residues, the reciprocity law, and ad\`elic spaces, in a similar way to the case of a curve or surface discussed above. See \cite{Yekutieli1992} and others of Yekutieli's papers, e.g. \cite{Hubl1996} \cite{Yekutieli1995}.

\subsection{Ad\`elic analysis}\label{subsection_adelic_analysis}
This work has many connections to I.~Fesenko's programme of two-dimensional ad\`elic analysis \cite{Fesenko2003} \cite{Fesenko2006} \cite{Fesenko2008} \cite{Fesenko2008a} \cite {Morrow2008} \cite{Morrow2008a} \cite{Morrow2008b}, and is part of the author's attempt to understand the connection between ad\`elic analysis and more familiar methods in algebraic geometry.

Two-dimensional ad\`elic analysis aims to generalise the current rich theories of topology, measure, and harmonic analysis which exist for local fields, by which mathematicians study curves and number fields, to dimension two. In particular, Fesenko generalises the Tate-Iwasawa \cite{Iwasawa1992} \cite{Tate1967} method of studying the zeta function of a global field to dimension two, giving a new approach to the study of the $L$-function of an elliptic curve over a global field. The author hopes that the reader is satisfied to hear only the most immediate relations between this fascinating subject and the current paper.

Let $E$ be an elliptic curve over a number field $K$, with function field $F=K(E)$, and let $\cal{E}$ be a regular, projective model of $E$ over the ring of integers $\roi_K$. Then $\cal{E}$ satisfies the assumptions which we impose on our arithmetic surfaces in this paper. Let $\psi=\otimes_{s\in S_0}\psi_s:\bb{A}_K\to S^1$ be an additive character on the ad\`ele group of $K$, and let $\omega\in\Omega_{F/K}^1$ be a fixed, non-zero differential form. For $x\in y\subset\cal{E}$ a point contained in a curve as usual, with $x$ sitting over $s\in S$, introduce an additive character \[\psi_{x,y}:F_{x,y}\to S^1,\quad a\mapsto \psi_s(\RRES_{x,y}(a\omega)),\] where $\RRES_{x,y}$ is the relative residue map which we will construct in section \ref{section_reciprocity_for_surfaces}. If $x$ is a fixed point, then our reciprocity law will imply \[\sum_{\substack{y\subset X\\y\ni x}}\psi_{x,y}(a)=0\] for any $a\in F$. 

Moreover, suppose that $\psi$ is trivial on global elements and that $y$ is a fixed horizontal curve; then Fesenko also proves \cite[\S27 Proposition]{Fesenko2008a} \[\sum_{\substack{x\in X_0\\x\in y\cup\{\sub{arch}\}}}\psi_{x,y}(a)=0.\] We are deliberately vague here. Let us just say that we must adjoin archimedean points to $S$ and $y$, consider two-dimensional archimedean local fields such as $\bb{R}((t))$, and define suitable additive characters at these places; once these have been suitably introduced, this reciprocity law follows from ad\`elic reciprocity for the number field $k(y)$.

\subsection{Future work}\label{subsection_arithmetic_surfaces_future_work}
The author is thinking about several topics related to this paper which may interest the reader. Let $\pi:X\to \roi_K$ be an arithmetic surface.

\subsubsection*{Grothendieck duality}
In the existing work on explicit Grothendieck duality on algebraic varieties using residues, there are three key steps. Firstly one must define suitable local residue maps, either on spaces of differential forms or on local cohomology groups. Secondly, the local residue maps are used to define the dualizing sheaf, and finally the local residue maps must be patched together in some fashion to define Grothendieck's trace map on the cohomology of the dualizing sheaf.

In this paper the first two steps are carried out for arithmetic surfaces. It should be possible to use our residue maps $(\RRES_{x,y})_{x,y}$ to construct the trace map \[\mbox{tr}_\pi:H^1(X,\omegaa_\pi)\to\roi_K,\] and give an explicit ad\`elic proof of Grothendieck duality, similar to the existing work for algebraic varieties described above. This should follow relatively easily from the contents of this paper, and the author hopes to publish it at a later time\footnote{See \cite{Morrow2010}.}.

\subsubsection*{Horizontal reciprocity}
If $y$ is horizontal then reciprocity law along $y$ does not make sense naively (in contrast with a vertical curve: see remark \ref{remark_reciprocity_along_a_vertical_curve}), since the residues $\RRES_{x,y}\omega$ belong to different fields as $x$ varies across $y$. Of course, this is the familiar problem that $\Spec\roi_K$ is not a relative curve. As explained in the discussion of Fesenko's work above, this is fixed by taking into account the archimedean data. Such results seem to live outside the realm of algebraic geometry, and need to be better understood.

\subsubsection*{Two-dimensional Poisson summation}
Perhaps it is possible to find a global duality result on $X$ which incorporates not only Grothendieck duality of $X$ relative to $S$, but also the arithmetic duality on the base i.e. Poisson summation. Such a duality would necessarily incorporate archimedean data and perhaps be most easily expressed ad\`elically. Perhaps it already exists, in the form of Fesenko's two-dimensional theta formula \cite[\S3.6]{Fesenko2008a}.

\subsubsection*{Multiplicative theory}
We have focused on additive theory, but as we mentioned while discussing Osipov's work, there are natural multiplicative analogues. In fact, the `multiplicative residue map' for mixed characteristic two-dimensional local fields has been defined by K.~Kato \cite{Kato1983}. Fesenko's work includes an ad\`elic interpretation of the conductors of the special fibres of $\cal{E}$, but only under the assumption that the reduced part of each fibre is semi-stable \cite[\S40, Remark 2]{Fesenko2008a}; similar results surely hold in greater generality and are related to `conductor $=$ discriminant' formulae \cite{Kato2004} \cite{Liu2000} \cite{Saito1988}.

Moreover, Fesenko's two-dimensional theta formula \cite[3.6]{Fesenko2008a} is an ad\`elic duality which takes into account the interplay between the additive and multiplicative structures. It is important to understand better its geometric interpretation, at least in the case of an algebraic surface.

Perhaps it is also possible to study vanishing cycles \cite{Saito1987} using similar techniques.

\subsection{Notation}
If $A$ is a (always commutative) ring, then we write $\frak{p}\lhd A$ to denote that $\frak{p}$ is an ideal of $A$; this notation seems to be common to those educated in Oxford, and less familiar to others. We write $\frak{p}\lhd\!^1A$ to indicate that the height of $\frak{p}$ is $1$. If $\frak{p}$ is prime, then $k(\frak{p})=\Frac A/\frak{p}$ is the residue field at $\frak{p}$. If $A$ is a local ring, then the maximal ideal is $\frak{m}_A$.

If $F$ is a complete discrete valuation field, then its ring of integers is $\roi_F$, with maximal ideal $\frak{p}_F$. The residue field $k(\frak{p}_F)$ will be denoted $\res{F}$; this notation seems to be common among those affected by the Russian school of arithmetic geometry. Discrete valuations are denoted $\nu$, usually with an appropriate subscript to avoid confusion.

If $A$ is a $B$-algebra, the the space of relative Kahler differentials is $\Omega_{A/B}=\Omega_{A/B}^1$, and the universal differential is denoted $d:A\to\Omega_{A/B}$. For $a\in A$, we often write $da$ in place of $d(a)$.

Injective maps are often denoted by $\into$, and surjective maps by $\onto$.

\subsection{Acknowledgement}
I am grateful to my supervisor I. Fesenko for suggesting that I think about arithmetic ad\`elic duality. He and L. Xiao gave me some invaluable help on appendix \ref{section_finite_morphisms_etc}, the contents of which were essential in an earlier version of the paper. A key idea in the proof of lemma \ref{lemma_normalization_result}, namely the use of the prime avoidance lemma, was kindly shown to me by David E Speyer via the online forum {\tt mathoverflow.net}, and any subsequent errors in the proof are due entirely to myself. Finally, I would like to thank the anonymous referee for reading the paper with great diligence. 

Parts of this research were done while visiting the IH\'ES (November 2008) and Harvard University (Spring 2009). These extended trips would not have been possible without funding provided through the London Mathematical Society in the form of the Cecil King Travel Scholarship, and I would like to thank both the Cecil King Memorial Foundation for its generosity and the institutes for their hospitality.

\section{Local relative residues}\label{section_local_relative_residues}
Here we develop a theory of residues of differential forms on two-dimensional local fields. Recall that a two-dimensional local field is a complete discrete valuation field $F$ whose residue field $\res{F}$ is a (non-archimedean, in this paper) local field. We will be interested in such fields $F$ of characteristic zero; when the local field $\res{F}$ also has characteristic zero then we say that $F$ has {\em equal characteristic zero}; when $\res{F}$ has finite characteristic, then $F$ is said to be of {\em mixed characteristic}.

\subsection{Continuous differential forms}
We begin by explaining how to construct suitable spaces of `continuous' differential forms.

For any Noetherian, local ring $A$ and $A$-module $N$, we will denote by $N^\sub{sep}$ the maximal Hausdorff (=separated) quotient for the $\frak{m}_A$-adic topology, i.e. \[N^\sub{sep}=N\left/\bigcap_{n=1}^{\infty}\frak{m}_A^nN\right..\]

\begin{remark}\label{remark_separated_quotients_under_extensions}
Suppose that $A/B$ is a finite extension of Noetherian, local domains. Then $\frak{m}_A\cap B=\frak{m}_B$. Also, the fibre $A\otimes_B k(\frak{m}_B)$ is a finite dimensional $k(\frak{m}_B)$-vector space, and is therefore Artinian; hence $\frak{m}_BA$ contains $\frak{m}_A^n$ for $n\gg 0$. So for any $B$-module $N$, \[N^\sub{sep}\otimes_B A=(N\otimes_B A)^\sub{sep}.\]
\end{remark}

\begin{lemma}\label{lemma_differential_forms_of_tower}
Let $A/B$ be a finite extension of Noetherian, local domains, which are $R$-algebras, where $R$ is a Noetherian domain. Assume that $\Omega_{B/R}^\sub{sep}$ is a free $B$-module, and that $\Frac A/\Frac B$ is a separable extension. Then there is an exact sequence \[0\to\Omega_{B/R}^\sub{sep}\otimes_B A\to\Omega_{A/R}^\sub{sep}\to\Omega_{A/B}\to 0\] of $A$-modules.
\end{lemma}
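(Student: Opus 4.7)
The plan is to apply the separated-quotient functor $(-)^{\sub{sep}}$ to the standard first fundamental exact sequence of Kähler differentials and then verify the resulting sequence is exact on the left.

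Starting point: the right-exact sequence
\[\Omega_{B/R}\otimes_B A\to \Omega_{A/R}\to \Omega_{A/B}\to 0.\]
Applying $(-)^{\sub{sep}}$, which is right exact (it is a quotient functor), I identify both outer terms. For the left term I invoke remark \ref{remark_separated_quotients_under_extensions} with $N=\Omega_{B/R}$ to get $(\Omega_{B/R}\otimes_B A)^{\sub{sep}}\cong \Omega_{B/R}^{\sub{sep}}\otimes_B A$. For the right term, since $A/B$ is finite the module $\Omega_{A/B}$ is finitely generated over the Noetherian local ring $A$, so Krull's intersection theorem gives $\bigcap_n\frak{m}_A^n\Omega_{A/B}=0$, i.e.\ $\Omega_{A/B}^{\sub{sep}}=\Omega_{A/B}$. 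This produces the right-exact sequence of the lemma, and it only remains to establish injectivity of the leftmost map.

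For injectivity, I would use freeness together with separability of $\Frac A/\Frac B$. Since $\Omega_{B/R}^{\sub{sep}}$ is free over $B$, the module $M:=\Omega_{B/R}^{\sub{sep}}\otimes_B A$ is free, hence torsion-free, over $A$; choose a $B$-basis $\{db_i\}_{i\in I}$ of $\Omega_{B/R}^{\sub{sep}}$. Setting $K=\Frac A$ and $L=\Frac B$, separability of $K/L$ yields a short exact sequence
\[0\to \Omega_{L/R}\otimes_L K\to \Omega_{K/R}\to \Omega_{K/L}\to 0,\]
and the images of $\{db_i\}$ in $\Omega_{L/R}$ are $L$-linearly independent (because they are a $B$-basis of $\Omega_{B/R}^{\sub{sep}}$ and $B$ is a domain, so any $L$-relation would clear denominators to a $B$-relation). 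Hence $\{db_i\}$ are $K$-linearly independent in $\Omega_{K/R}$.

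The main obstacle is bridging $\Omega_{A/R}^{\sub{sep}}$ and $\Omega_{K/R}$, since a priori $\bigcap_n\frak{m}_A^n\Omega_{A/R}$ need not be $A$-torsion, so there is no obvious map $\Omega_{A/R}^{\sub{sep}}\to \Omega_{K/R}$. My plan is to factor through an intermediate module on which both sides are controlled: take $\omega=\sum_i a_i\,db_i\in M$ whose image in $\Omega_{A/R}^{\sub{sep}}$ vanishes, and lift to an element of $\Omega_{B/R}\otimes_B A$ whose image in $\Omega_{A/R}$ lies in $\bigcap_n\frak{m}_A^n\Omega_{A/R}$. Using that $\frak{m}_A^n\subset \frak{m}_B A$ for $n\gg 0$ (remark \ref{remark_separated_quotients_under_extensions}) and freeness of $M$, I rewrite the vanishing condition modulo $\frak{m}_B^n M$ for all $n$; this forces each coefficient $a_i$ into $\bigcap_n\frak{m}_A^n A=0$ by Krull's intersection theorem applied to $A$. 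The technical verification here — that passing to the $\mathfrak m_B$-adic filtration faithfully transports the relation to the coefficients, using the freeness of $\Omega_{B/R}^{\sub{sep}}$ over $B$ and the linear-independence information coming from separability at the generic point — is where I expect the real work to lie.
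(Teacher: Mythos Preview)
Your treatment of right-exactness is fine and matches the paper: apply $(-)^{\sub{sep}}$, use remark \ref{remark_separated_quotients_under_extensions} for the left term, and Krull's intersection theorem for the right term.

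The gap is in your injectivity argument. You correctly identify the obstacle: there is no evident map $\Omega_{A/R}^{\sub{sep}}\to\Omega_{\Frac A/R}$, so the $K$-linear independence of the $db_i$ in $\Omega_{\Frac A/R}$ does not directly help. Your proposed workaround --- lifting to $\Omega_{A/R}$, using $\frak{m}_A^n\subset\frak{m}_B A$, and then ``reading off'' the coefficients $a_i$ modulo $\frak{m}_B^n$ --- does not work as stated: knowing that the image of $\sum a_i\,db_i$ lies in $\frak{m}_A^n\Omega_{A/R}$ gives no control over the individual $a_i$, because $\Omega_{A/R}$ need not be finitely generated and there is no projection back to the free module $M$. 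The ``technical verification'' you defer is precisely the missing idea.

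The paper resolves this by constructing, for each basis element $\omega$ of $\Omega_{B/R}^{\sub{sep}}$, an explicit coefficient-extraction map that \emph{does} factor through $\Omega_{A/R}^{\sub{sep}}$. Namely, the coordinate functional $D_\omega:\Omega_{B/R}^{\sub{sep}}\to B$ extends, via $\Omega_{\Frac B/\Frac R}\otimes \Frac A\cong\Omega_{\Frac A/\Frac R}$ (separability), to an $R$-derivation $D:A\to\Frac A$. The key point is that the image $N:=D(A)\cdot A\subset\Frac A$ is a \emph{finitely generated} $A$-module (since $A$ is module-finite over $B$), hence separated; so the induced map $\Omega_{A/R}\to N$ kills $\bigcap_n\frak{m}_A^n\Omega_{A/R}$ and descends to $\Omega_{A/R}^{\sub{sep}}$. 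It sends $j(\omega)$ to $1$ and the other $j(db_i)$ to $0$, proving $j$ injective. This construction of a finitely generated target $N$ inside $\Frac A$ is the device that bridges the gap you noticed.
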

\begin{proof}
The standard exact sequence of differential forms is \[\Omega_{B/R}\otimes_B A\to\Omega_{A/R}\to\Omega_{A/B}\to 0.\] Since $A$ is a finite $B$-module, the space of differentials $\Omega_{A/B}$ is a finitely generated, torsion $A$-module. Apply $^\sub{sep}$ to the sequence to obtain, using remark \ref{remark_separated_quotients_under_extensions}, \[\Omega_{B/R}^\sub{sep}\otimes_B A\stackrel{j}{\to}\Omega_{A/R}^\sub{sep}\to\Omega_{A/B}\to 0,\] which is exact. It remains to prove that $j$ is injective.

Let $F$, $M$, $K$ be the fields of fractions of $A$, $B$, $R$ respectively, and let $\omega\in \Omega_{B/R}^\sub{sep}$ be an element of some chosen $B$-basis for this free module. Let $D_{\omega}:\Omega_{B/R}^\sub{sep}\to B$ send $\omega$ to $1$ and vanish on all other elements of the chosen basis. This homomorphism extends first to an $M$-linear map $D_M:\Omega_{M/K}\to M$, and then to an $F$-linear map $D_F:\Omega_{F/K}\to F$; this follows from the identifications $\Omega_{B/R}\otimes_BM \cong\Omega_{M/K}$ and $\Omega_{M/K}\otimes_M F\cong\Omega_{F/K}$. Finally, it induces an $R$-linear derivation $D:A\to F$ by $D(a)=D_F(d(a))$, where $d:F\to\Omega_{F/K}$ is the universal derivation.

Let $N\subseteq F$ be the $A$-module spanned by $D(a)$, for $a\in A$. This is a finitely generated $A$-module, for if $a_1,\dots,a_n$ generate $A$ as a $B$-module, then $N$ is contained in the $A$-module spanned by $a_1,\dots,a_n,D(a_1),\dots,D(a_n)$. Thus the non-zero homomorphism $\tilde{D}:\Omega_{A/R}\to N$ induced by $D$ factors through $\Omega_{A/R}^\sub{sep}$ (by Nakayama's lemma). Furthermore, $\tilde{D}$ sends $j(\omega)\in\Omega_{A/R}^\sub{sep}$ to $1$ and vanishes on the images under $j$ of the other basis elements. It follows that $j$ is injective.
\end{proof}

\begin{remark}
Whether $\Omega_{B/R}^\sub{sep}$ is free is closely related to whether $B$ is a formally smooth algebra over $R$; see \cite[Th\'eor\`eme 20.5.7]{EGA_IV_I}. M.~Kurihara uses such relations more systematically in his study of complete discrete valuation fields of mixed characteristic \cite{Kurihara1987}.
\end{remark}

\begin{remark}\label{remark_completion_of_fin_gen_algebra}
Suppose that $R$ is a Noetherian ring and $A$ is a finitely generated $R$-algebra. Let $\frak{p}\lhd A$ be a prime ideal. Then $\Omega_{A_{\frak{p}}/R}=\Omega_{A/R} \otimes_AA_{\frak{p}}$ is a finitely generated $A_{\frak{p}}$-module, and the natural map $\Omega_{A_{\frak{p}}/R}\otimes_{A_{\frak{p}}}\hat{A_{\frak{p}}}\to\Omega_{\hat{A_{\frak{p}}}/R}$ gives rise to an isomorphism \[\Omega_{A_{\frak{p}}/R}\otimes_{A_{\frak{p}}}\hat{A_{\frak{p}}}\cong\lim_{\substack{\longleftarrow\\n}}\Omega_{\hat{A_{\frak{p}}}/R}/\frak{p}^n\Omega_{\hat{A_{\frak{p}}}/R}=\hat{\Omega_{\hat{A_{\frak{p}}}/R}}\] (see e.g. \cite[exercise 6.1.13]{Liu2002}).

Therefore $\Omega_{\hat{A_{\frak{p}}}/R}^\sub{sep}$ is a finitely generated $\hat{A_{\frak{p}}}$-module (since it embeds into $\hat{\Omega_{\hat{A_{\frak{p}}}/R}}$), and it is therefore complete; so the embedding $\smash{\Omega_{\hat{A_{\frak{p}}}/R}^\sub{sep}\into\hat{\Omega_{\hat{A_{\frak{p}}}/R}}}$ is actually an isomorphism. Thus we have a natural isomorphism \[\Omega_{A/R}\otimes_A\hat{A_{\frak{p}}}\cong\Omega_{\hat{A_{\frak{p}}}/R}^\sub{sep}.\]

We will occasionally give explicit proofs of results which could otherwise be deduced from this remark.
\end{remark}

\begin{definition}
Let $F$ be a complete discrete valuation field, and let $K$ be a subfield of $F$ such that $\Frac(K\cap\roi_F)=K$. The space of {\em continuous relative differentials} is \[\Omega_{F/K}^\sub{cts}:=\Omega_{\roi_F/K\cap\roi_F}^\sub{sep}\otimes_{\roi_F}F.\] It is vector space over $F$ and there is a natural surjection $\Omega_{F/K}\onto\Omega_{F/K}^\sub{cts}$.
\end{definition}

\begin{remark}
Suppose that $F$, $K$ are as in the previous definition, and that $F'$ is a finite, separable extension of $F$. Using remark \ref{remark_separated_quotients_under_extensions}, one shows $\Omega_{F'/K}^\sub{cts}=\Omega_{F/K}^\sub{cts}\otimes_FF'$, and therefore there is a well-defined trace map $\Tr_{F'/F}:\Omega_{F'/K}^\sub{cts}\to\Omega_{F/K}^\sub{cts}.$
\end{remark}

\subsection{Equal characteristic}\label{subsection_equal_characteristic}
We begin with residues in the equal characteristic case; this material is well-known (see e.g. \cite{Serre1988}) so we are brief. Let $F$ be a two-dimensional local field of equal characteristic zero. We assume that an embedding of a local field $K$ (necessarily of characteristic zero) into $F$ is given; such an embedding will be natural in our applications. The valuation $\nu_F|_K$ must be trivial, for else it would be a multiple of $\nu_K$ (a complete discrete valuation field has a unique normalised discrete valuation) which would imply $\res{K}\into\res{F}$, contradicting our hypothesis on the characteristic of $\res{F}$; so $K\subseteq\roi_F$ and $K\into\res{F}$, making $\res{F}$ into a finite extension of $K$.

\begin{lemma}\label{lemma_unique_coefficient_field}
$F$ has a unique coefficient field which contains $K$.
\end{lemma}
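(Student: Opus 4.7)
The plan is to exhibit the unique coefficient field explicitly via Hensel's lemma and the primitive element theorem, since both $F$ and $\overline{F}$ have characteristic zero.

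First, I would recall what is at stake. A coefficient field of $F$ containing $K$ means a subfield $k\subseteq\roi_F$ such that $K\subseteq k$ and the composition $k\into\roi_F\onto\res{F}$ is an isomorphism. Because $K$ and $\res{F}$ both have characteristic zero and $\res{F}/K$ is a finite extension (by the preceding discussion), this extension is automatically separable. I would then invoke the primitive element theorem to write $\res{F}=K(\al)$ for some $\al\in\res{F}$, and let $p(T)\in K[T]$ be its minimal polynomial, which is separable.

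For existence, I would apply Hensel's lemma to the complete discrete valuation ring $\roi_F$: since $\al$ is a simple root of the reduction of $p$ in $\res{F}$, there is a (unique) lift $\tilde{\al}\in\roi_F$ of $\al$ with $p(\tilde{\al})=0$. Setting $k:=K(\tilde{\al})\subseteq\roi_F$, the evaluation map $K[T]/(p(T))\to k$ is an isomorphism, and the residue map $k\to\res{F}$ sends $\tilde{\al}$ to $\al$, hence is surjective; since both sides have the same finite degree over $K$, it is an isomorphism. Thus $k$ is a coefficient field containing $K$.

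For uniqueness, suppose $k'\subseteq\roi_F$ is another coefficient field containing $K$. Then $k'$ must contain an element $\beta$ reducing to $\al$, and since the residue map $k'\isoto\res{F}$ is a ring isomorphism and $p(\al)=0$, necessarily $p(\beta)=0$ in $k'\subseteq\roi_F$. But Hensel's lemma guarantees that $\tilde{\al}$ is the \emph{only} root of $p$ in $\roi_F$ with residue $\al$, so $\beta=\tilde{\al}$. Therefore $k'\supseteq K(\tilde{\al})=k$, and since $[k':K]=[\res{F}:K]=[k:K]$, we conclude $k'=k$.

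The only mild subtlety, and thus the main point to handle carefully, is verifying that $\res{F}/K$ is separable so that the minimal polynomial $p$ is separable and Hensel's lemma applies in its strong form; this is automatic here from the equal characteristic zero hypothesis, but it is the one place where the characteristic assumption of the section is essential.
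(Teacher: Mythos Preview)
Your proof is correct and follows essentially the same approach as the paper: both use the separability of $\res{F}/K$ and Hensel's lemma (via a primitive element) to produce the coefficient field. The only minor difference is in the uniqueness step: the paper observes that any finite subextension $K'/K$ of $F$ lies in $\roi_F$ and injects into $\res{F}$ under the residue map, yielding the degree bound $|K':K|\le|\res{F}:K|$ (so there can be at most one subextension of maximal degree), whereas you appeal directly to the uniqueness of the Hensel lift.
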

\begin{proof}
Set $n=|\res{F}:K|$. Suppose first that $K'/K$ is any finite subextension of $F/K$. Then $K'\subseteq\roi_F$ and so the residue map restricts to a $K$-linear injection $K'\into\res{F}$, proving that $|K':K|\le n$. This establishes that $K$ has at most one extension of degree $n$ inside $F$ (for if there were two extensions then we could take their composite), and that if such an extension exists then it is the desired coefficient field (for then the residue map $K'\into\res{F}$ must be an isomorphism).

Since $K$ is perfect, apply Hensel's lemma to lift to $\roi_F$ a generator for $\res{F}/K$; the subextension of $F/K$ generated by this element has degree $n$, completing the proof.
\end{proof}

This unique coefficient field will be denoted $k_F$; it depends on the image of the embedding $K\into\roi_F$, though the notation does not reflect that. $k_F$ is a finite extension of $K$; moreover, it is simply the algebraic closure of $K$ inside $F$. When the local field $K\subseteq F$ has been fixed, we will refer to $k_F$ as {\em the} coefficient field of $F$ ({\em with respect to $K$}, if we want to be more precise). Standard structure theory implies that choosing a uniformiser $t\in F$ induces a $k_F$-isomorphism $F\cong k_F((t))$.

\begin{lemma}\label{lemma_differential_forms_of_K[[t]]}
$\Omega_{\roi_F/\roi_K}^\sub{sep}$ is a free $\roi_F$-module of rank $1$, with basis $dt$, where $t$ is any uniformiser of $F$. Hence $\Omega_{F/K}^\sub{cts}$ is a one-dimensional vector space over $F$ with basis $dt$.
\end{lemma}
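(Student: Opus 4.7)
My plan is to fix the $\roi_K$-algebra identification $\roi_F = k_F[[t]]$ provided by Lemma \ref{lemma_unique_coefficient_field}, show that $da = 0$ in $\Omega_{\roi_F/\roi_K}$ for every $a \in k_F$, and then use an explicit retraction to $\roi_F$ to conclude that $\Omega_{\roi_F/\roi_K}^\sub{sep}$ is free of rank one on $dt$. The statement about $\Omega_{F/K}^\sub{cts}$ then follows by tensoring with $F$.

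The key input is the vanishing of $dk_F$ in $\Omega_{\roi_F/\roi_K}$. Since $k_F/K$ is a finite separable extension of characteristic zero local fields, $\Omega_{\roi_{k_F}/\roi_K}$ is torsion, annihilated by the nonzero different $\frak{d}_{k_F/K}$. Pick a nonzero $c \in \frak{d}_{k_F/K}$; then $c\,da = 0$ in $\Omega_{\roi_{k_F}/\roi_K}$, and hence in $\Omega_{\roi_F/\roi_K}$, for every $a \in \roi_{k_F}$. But $c \in k_F^\times \subseteq \roi_F^\times$ --- any nonzero element of the subfield $k_F \subseteq F$ has valuation zero --- so $c$ is a unit in $\roi_F$ and we conclude $da = 0$. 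Extension to $a \in k_F$ is a routine clearing of $\pi_{k_F}^n$-denominators, with $\pi_{k_F}$ again a unit in $\roi_F$ by the same argument.

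Granted this, the formal $t$-derivative $\partial/\partial t : k_F[[t]] \to k_F[[t]]$ is an $\roi_K$-linear derivation (it vanishes on $k_F \supseteq \roi_K$), and so defines an $\roi_F$-linear map $\phi : \Omega_{\roi_F/\roi_K} \to \roi_F$ with $\phi(dt) = 1$. Since $\roi_F$ is $\frak{m}_F$-adically separated, $\phi$ factors through $\Omega_{\roi_F/\roi_K}^\sub{sep}$, making $\roi_F \cdot dt$ a free rank-one submodule. To show this submodule exhausts $\Omega_{\roi_F/\roi_K}^\sub{sep}$, take $x = \sum_i a_i t^i \in \roi_F$ and $x_N = \sum_{i \le N} a_i t^i$: the vanishing of the $da_i$ yields $d x_N = \bigl(\sum_{i=1}^N i a_i t^{i-1}\bigr) dt$, while $x - x_N \in t^{N+1}\roi_F$ together with the Leibniz rule gives $d(x - x_N) \in \frak{m}_F^N \Omega_{\roi_F/\roi_K}$. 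Hence $dx \equiv \phi(x)\,dt \pmod{\frak{m}_F^N \Omega_{\roi_F/\roi_K}^\sub{sep}}$ for every $N$.

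The main obstacle is then the upgrade of these congruences to an equality, which requires knowing that $\Omega_{\roi_F/\roi_K}^\sub{sep}$ is actually Hausdorff. This is a general feature of the separated quotient: for any module $M$ over a local ring $(A, \frak{m}_A)$ with $T = \bigcap_k \frak{m}_A^k M$, any $y \in \bigcap_n (\frak{m}_A^n M + T)$ can be written $y = m_n + t_n$ with $m_n \in \frak{m}_A^n M$ and $t_n \in T \subseteq \frak{m}_A^n M$, forcing $y \in \frak{m}_A^n M$ for all $n$ and hence $y \in T$; so $\bigcap_n \frak{m}_A^n(M/T) = 0$. Applied to $\Omega_{\roi_F/\roi_K}^\sub{sep}$, this gives $dx = \phi(x)\,dt$ for every $x$, and hence $\Omega_{\roi_F/\roi_K}^\sub{sep} = \roi_F \cdot dt$, completing the proof.
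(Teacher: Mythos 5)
Your proof is correct and follows the same basic outline as the paper: identify $\roi_F\cong k_F[[t]]$, show $dk_F=0$ in $\Omega_{\roi_F/\roi_K}$, use $\partial/\partial t$ as a retraction to get freeness, and use the power-series expansion of $f$ to show $dt$ generates the separated quotient. Two of your steps differ from the paper's in a way worth noting. First, for the vanishing of $dk_F$, the paper observes directly that any $\roi_K$-derivation of $\roi_F$ extends by the quotient rule to vanish on $K\subseteq\roi_F$ and hence, by separability of $k_F/K$, on all of $k_F$; you instead invoke the different $\frak{d}_{k_F/K}$ annihilating $\Omega_{\roi_{k_F}/\roi_K}$ and note it becomes a unit in $\roi_F$. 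Both are valid; the paper's version is the slightly cleaner structural statement that $\Omega_{\roi_F/\roi_K}=\Omega_{\roi_F/k_F}$, while yours proves the same thing by exhibiting an explicit unit annihilator. Second, your final step is more roundabout than necessary: the paper's computation shows $df-\frac{df}{dt}\,dt\in\bigcap_n t^n\Omega_{\roi_F/\roi_K}$ already \emph{inside} $\Omega_{\roi_F/\roi_K}$, so the equality $df=\frac{df}{dt}\,dt$ in $\Omega^\sub{sep}$ is immediate from the definition of the separated quotient (and $\frak m_F=t\roi_F$); you pass to $\Omega^\sub{sep}$ first, obtain the congruences there, and then prove the separate lemma that $\Omega^\sub{sep}$ is $\frak m_F$-adically Hausdorff. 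That lemma is true and your proof of it is fine, but it is not needed if you run the computation one module upstairs.
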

\begin{proof}
Any derivation on $\roi_F$ which vanishes on $\roi_K$ also vanishes on $K$, and it even vanishes on $k_F$ since $k_F/K$ is a finite, separable extension. Hence $\Omega_{\roi_F/\roi_K}=\Omega_{\roi_F/K}=\Omega_{\roi_F/k_F}$.

Fix a uniformiser $t\in F$, to induce a $k_F$-isomorphism $\roi_F\cong k_F[[t]]$. This allows us to define a $k_F$-linear derivation $\frac{d}{dt}:\roi_F\to\roi_F,\,\sum_ia_it^i\mapsto\sum_iia_it^{i-1}$. For any $f\in\roi_F$ and $n\ge 0$, we may write $f=\sum_{i=0}^na_it^i+gt^{n+1}$, with $a_0,\dots,a_n\in k_F$ and $g\in\roi_F$; let $d:\roi_F\to\Omega_{\roi_F/\roi_K}$ be the universal derivation and apply $d$ to obtain \[df=\sum_{i=0}^na_iit^{i-1}\,dt+g(n+1)t^ndt+t^{n+1}dg.\] It follows that $df-\frac{df}{dt}dt\in\bigcap_{n=1}^{\infty}t^n\Omega_{\roi_F/k_F}$. Taking the separated quotient shows that $dt$ generates $\Omega_{\roi_F/k_F}^\sub{sep}$; the existence of the derivation $\frac{d}{dt}$ implies that $dt$ is not torsion.
\end{proof}

The {\em residue map of $F$, relative to $K$} is defined by \[\RES_F:\Omega_{F/K}^\sub{cts}\to k_F,\quad \omega=fdt\mapsto\mbox{coeft}_{t^{-1}}(f),\] where the notation means that we take the coefficient of $t^{-1}$ in the expansion of $f$. Implicit in the definition is the choice of a $k_F$-isomorphism $F\cong k_F((t))$.

It is well-known that the residue map does not depend on the choice of uniformiser $t$. Since the proof is straightforward in residue characteristic zero, we recall it. Any other uniformiser $T$ has the form $T=\sum_{i=1}^\infty a_it^i$ with $a_i\in k_F$ and $a_1\neq 0$; for $j\in\bb{Z}\setminus{\{-1\}}$, we have \[\mbox{coeft}_{t^{-1}}\left(T^j\frac{dT}{dt}\right)=\mbox{coeft}_{t^{-1}}\left(\frac{1}{j+1}\frac{dT^{j+1}}{dt}\right)=0.\] When $j=-1$, we instead calculate as follows: \[\mbox{coeft}_{t^{-1}}\left(T^{-1}\frac{dT}{dt}\right)=\mbox{coeft}_{t^{-1}}((a_1^{-1}t^{-1}-a_1^{-2}a_2+\dots)(a_1+2a_2t+\dots))=1.\] Finally, since the residue is continuous with respect to the discrete valuation topology on $\Omega_{F/K}^\sub{cts}=Fdt$ and the discrete topology on $k_F$, we have \[\mbox{coeft}_{t^{-1}}\left(\sum_{j\gg-\infty}b_jT^j\frac{dT}{dt}\right)=b_{-1},\] and it follows that the residue map may also be defined with respect to the isomorphism $F\cong k_F((T))$.

Now we recall functoriality of the residue map. Note that if $F'$ is a finite extension of $F$, then there is a corresponding finite extension $k_{F'}/k_F$ of the coefficient fields.

\begin{proposition}\label{proposition_functoriality_in_equal_characteristic_case}
Let $F'$ be a finite extension of $F$. Then the following diagram commutes:
\[\begin{CD}
\Omega_{F'/K}^\sub{cts} &@>\RES_{F'}>>&k_{F'}\\
@V\Tr_{F'/F}VV && @VV\Tr_{k_{F'}/k_F}V\\
\Omega_{F/K}^\sub{cts} &@>\RES_F>>&k_F
\end{CD}\]
\end{proposition}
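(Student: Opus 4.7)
The plan is to reduce the proposition to two easier subcases by inserting the intermediate field $F'' := F \cdot k_{F'}$, the compositum taken inside $F'$. Then $F''/F$ has coefficient field $k_{F''} = k_{F'}$ and a uniformiser of $F$ remains a uniformiser of $F''$ (the ``residually algebraic'' piece), while $F'/F''$ is totally ramified with $k_{F'} = k_{F''}$. By transitivity of the trace on both differentials and coefficient fields, together with the tower identity $\Tr_{k_{F'}/k_F} = \Tr_{k_{F''}/k_F} \circ \Tr_{k_{F'}/k_{F''}}$, it suffices to verify the proposition for $F''/F$ and for $F'/F''$ separately.

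For the residually algebraic piece $F''/F$, fix a common uniformiser $t$ and identify $F \cong k_F((t))$, $F'' \cong k_{F''}((t))$. An arbitrary $\omega \in \Omega_{F''/K}^\sub{cts}$ has the form $\sum_i a_i t^i\, dt$ with $a_i \in k_{F''}$; continuity plus $F$-linearity of $\Tr_{F''/F}$ imply that it acts by applying $\Tr_{k_{F''}/k_F}$ coefficient-wise. Comparing coefficients of $t^{-1}$ on both sides immediately yields the result.

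For the totally ramified piece I must show $\RES_{F''} \circ \Tr_{F'/F''} = \RES_{F'}$ (the coefficient-field trace being the identity in this case). By $F''$-linearity of $\Tr$, $k_{F''}$-linearity of $\RES$, and continuity in the valuation topology, the identity needs only be checked on $\omega = T^j\, dT$ for $j \in \bb{Z}$, where $T$ is a uniformiser of $F'$. The key input is the compatibility $\Tr_{F'/F''} \circ d = d \circ \Tr_{F'/F''}$, which follows (passing to the Galois closure and using transitivity and surjectivity of the separable trace if needed) from $\Tr = \sum_\sigma \sigma$ together with the $K$-linearity of each $\sigma$. When $j \neq -1$, the identity $T^j\, dT = \frac{1}{j+1}\, d(T^{j+1})$ shows that $\Tr_{F'/F''}(T^j\, dT) = \frac{1}{j+1} d(\Tr_{F'/F''} T^{j+1})$ is exact in $\Omega_{F''/K}^\sub{cts}$, hence has vanishing residue (since for $h \in F''$, $\RES_{F''}(dh) = 0$); the right-hand side $\RES_{F'}(T^j\, dT)$ is also zero. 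When $j = -1$, the Galois calculation gives
\[
\Tr_{F'/F''}(dT/T) = \sum_\sigma d(\sigma T)/\sigma T = dN(T)/N(T),
\]
with $N := N_{F'/F''}$; because $F'/F''$ is totally ramified, $N(T)$ is a uniformiser of $F''$, and the uniformiser-independence of the residue map established earlier gives $\RES_{F''}(dN(T)/N(T)) = 1 = \RES_{F'}(dT/T)$.

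The main obstacle I anticipate is the $j = -1$ case of the totally ramified step: it relies on both the log-derivative/norm formula for the trace and the fact that $N_{F'/F''}(T)$ remains a uniformiser, which in turn uses that the residue-field degree is $1$. A secondary subtlety is verifying cleanly that $\Tr$ and $d$ commute as maps landing in $\Omega_{F''/K}^\sub{cts}$ (rather than merely in $\Omega_{F'/K}^\sub{cts}$), and that the reduction via a Galois closure is harmless.
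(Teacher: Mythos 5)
Your proof is correct, and the architecture matches the paper's: both insert the maximal unramified subextension $F'' = F\cdot k_{F'}$ (the paper phrases this as ``it suffices to consider two separate cases''), and both handle the unramified step by picking a common uniformiser and comparing $t^{-1}$-coefficients after applying $\Tr_{k_{F'}/k_F}$ termwise. The genuine divergence is in the totally ramified step: the paper chooses compatible uniformisers with $T^e = t$ and defers to Serre \cite[II.13]{Serre1988}, which amounts to computing $\Tr_{F'/F''}(T^m)$ directly on the power basis $1,T,\dots,T^{e-1}$; you instead pass to a Galois closure and invoke the two classical compatibilities $\Tr\circ d = d\circ\Tr$ and $\Tr_{F'/F''}(dT/T) = dN_{F'/F''}(T)/N_{F'/F''}(T)$, then finish with the facts that $N_{F'/F''}(T)$ is a uniformiser of $F''$ (because $f(F'/F'')=1$) and that $\RES_{F''}$ is uniformiser-independent, which the paper has already proved. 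Your route avoids writing down the explicit relation $T^e=t$ and keeps the whole argument self-contained, at the cost of the Galois-closure bookkeeping; the paper's version is shorter on the page only because it leans on the reference. You correctly flag the one place that needs a sentence of care: the commutation of $\Tr_{F'/F''}$ with $d$ must be asserted as a statement about $\Omega^{\mbox{\scriptsize cts}}$ rather than abstract K\"ahler differentials, which holds because each $K$-embedding of $F'$ into the Galois closure preserves the ring of integers and hence is continuous, so it descends to the separated quotients.
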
	
\begin{proof}
This is another well-known result, whose proof we give since it is easy in the characteristic zero case. It suffices to consider two separate cases: when $F'/F$ is unramified, and when $F'/F$ is totally ramified (as extensions of complete discrete valuation fields).

In the unramified case, $|k_{F'}:k_F|=|F':F|$ and we may choose compatible isomorphisms $F\cong k_F((t))$, $F'\cong k_{F'}((t))$; the result easily follows in this case.

In the totally ramified case, $F'/F$ is only tamely ramified, $k_{F'}=k_F$, and we may choose compatible isomorphisms $F\cong k_F((t))$, $F'\cong k_{F'}((T))$, where $T^e=t$. We may now follow the argument of \cite[II.13]{Serre1988}.
\end{proof}

\subsection{Mixed characteristic}
Now we introduce relative residue maps for two-dimensional local fields of mixed characteristic. We take a local, explicit approach, with possible future applications to higher local class field theory and ramification theory in mind. This residue map is implicitly used by Fesenko \cite[\S3]{Fesenko2003} to define additive characters in his two-dimensional harmonic analysis.

\subsubsection{Two-dimensional local fields of mixed characteristic}
We begin with a review of this class of fields.

\begin{example}\label{example_double_curly_field}
Let $K$ be a complete discrete valuation field. Let $K\dblecurly{t}$ be the following collection of formal series:
\begin{align*}K\dblecurly{t}
	=\left\{\sum_{i=-\infty}^{\infty}a_it^i :\right.&\, a_i\in K\mbox{ for all }i,\,\inf_i\nu_K(a_i)>-\infty,\\
	&\left.\phantom{\sum_{i=-\infty}^{\infty}}\mbox{and }a_i\to 0\mbox{ as } i\to -\infty\right\}.
\end{align*}
Define addition, multiplication, and a discrete valuation by
\begin{align*}
\sum_{i=-\infty}^{\infty}a_it^i + \sum_{j=-\infty}^{\infty}a_jt^j&=\sum_{i=-\infty}^{\infty}(a_i+b_i)t^i\\
\sum_{i=-\infty}^{\infty}a_it^i\cdot\sum_{j=-\infty}^{\infty}a_jt^j&=\sum_{i=-\infty}^{\infty}\left(\sum_{r=-\infty}^{\infty}a_rb_{i-r}\right)t^i\\
\nu\left(\sum_{i=-\infty}^{\infty}a_it^i\right)&=\inf_i\nu_K(a_i)
\end{align*}
Note that there is nothing formal about the sum over $r$ in the definition of multiplication; rather it is a convergent double series in the complete discrete valuation field $K$. These operations are well-defined, make $K\dblecurly{t}$ into a field, and $\nu$ is a discrete valuation under which $K\dblecurly{t}$ is complete. Note that $K\dblecurly{t}$ is an extension of $K$, and that $\nu|_K=\nu_K$, i.e. $e(K\dblecurly{t}/K)=1$.

The ring of integers of $K\dblecurly{t}$ and its maximal ideal are given by
\begin{align*}
\roi_{K\dblecurly{t}}&=\left\{\sum_ia_it^i:a_i\in\roi_K\mbox{ for all }i\mbox{ and }a_i\to\infty\mbox{ as }i\to-\infty\right\},\\
\frak{p}_{K\dblecurly{t}}&=\left\{\sum_ia_it^i:a_i\in\frak{p}_K\mbox{ for all }i\mbox{ and }a_i\to\infty\mbox{ as }i\to-\infty\right\}.
\end{align*}
The surjective homomorphism \[\roi_{K\dblecurly{t}}\to\res{K}((t)),\quad\sum_ia_it^i\mapsto\sum_i\res{a}_it^i\] identifies the residue field of $K\dblecurly{t}$ with $\res{K}((t))$.

The alternative description of $K\dblecurly{t}$ is as follows. It is the completion of $\Frac(\roi_K[[t]])$ with respect to the discrete valuation associated to the height one prime ideal $\pi_K\roi_K[[t]]$.
\end{example}

We will be interested in the previous example when $K$ is a local field of characteristic $0$. In this case, $K\dblecurly{t}$ is a two-dimensional local field of mixed characteristic.

Now suppose $L$ is any two-dimensional local field of mixed characteristic of residue characteristic $p$. Then $L$ contains $\bb{Q}$, and the restriction of $\nu_L$ to $\bb{Q}$ is a valuation which is equivalent to $\nu_p$, since $\nu_L(p)>0$; since $L$ is complete, we may topologically close $\bb{Q}$ to see that $L$ contains a copy of $\bb{Q}_p$. It is not hard to see that this is the unique embedding of $\bb{Q}_p$ into $L$, and that $L/\bb{Q}_p$ is an (infinite) extension of discrete valuation fields. The corresponding extension of residue fields is $\res{L}/\bb{F}_p$, where $\res{L}$ is a local field of characteristic $p$.

The analogue of the coefficient field in the equal characteristic case is the following:

\begin{definition}
The {\em constant} subfield of $L$, denoted $k_L$, is the algebraic closure of $\bb{Q}_p$ inside $L$.
\end{definition}

\begin{lemma}
If $K$ is an arbitrary field then $K$ is relatively algebraically closed in $K((t))$. If $K$ is a complete discrete valuation field then $K$ is relatively algebraically closed in $K\dblecurly{t}$; so if $K$ is a local field of characteristic zero, then the constant subfield of $K\dblecurly{t}$ is $K$.
\end{lemma}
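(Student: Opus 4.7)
The plan is to prove the first assertion directly using the $t$-adic valuation on $K((t))$, then deduce the second from it by elementary valuation theory, with the final claim about $k_{K\dblecurly{t}}$ being an immediate corollary.

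For the first statement, let $\nu$ denote the $t$-adic valuation on $K((t))$; it is trivial on $K$. Let $\alpha \in K((t))$ be algebraic over $K$, with minimal polynomial $P(X) = X^d + c_{d-1} X^{d-1} + \cdots + c_0 \in K[X]$. I would first show $\nu(\alpha) \geq 0$: if instead $\nu(\alpha) = n < 0$, then $\nu(\alpha^d) = dn$ while $\nu(c_i \alpha^i) = in > dn$ for each $i < d$ with $c_i \neq 0$ (since $\nu$ is trivial on $K$), so the ultrametric inequality forces $\nu(P(\alpha)) = dn \neq \infty$, contradicting $P(\alpha) = 0$. Hence $\alpha \in K[[t]]$, and $\beta := \alpha \bmod t$ lies in $K$ and is also a root of $P$ (reduce $P(\alpha) = 0$ modulo $t$). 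Then $\gamma := \alpha - \beta \in tK[[t]]$ remains algebraic over $K$; were $\gamma \neq 0$, its minimal polynomial $Q(X) = X^e + \cdots + q_0 \in K[X]$ would satisfy $q_0 \neq 0$, yet reducing $Q(\gamma) = 0$ modulo $t$ yields $q_0 = 0$, a contradiction. Thus $\alpha = \beta \in K$.

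For the second statement, let $\alpha \in K\dblecurly{t}$ be algebraic over $K$, set $L = K(\alpha) \subseteq K\dblecurly{t}$, and view $L$ as a finite extension of the complete discrete valuation field $K$. The valuation $\nu_K$ admits a unique extension to $L$, so it must coincide with the restriction to $L$ of the discrete valuation $\nu$ of $K\dblecurly{t}$. Since $\nu|_K = \nu_K$ and $\nu(K\dblecurly{t}^{\times}) = \bb{Z}$, the ramification index $e(L/K)$ equals $1$. The residue field $\res L$ embeds into $\res{K\dblecurly{t}} = \res K((t))$ as an algebraic extension of $\res K$; applying the first part to $\res K$ gives $\res L = \res K$, so $f(L/K) = 1$. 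Therefore $[L:K] = ef = 1$, forcing $\alpha \in K$. The corollary for $K$ a local field of characteristic zero is then immediate: $k_{K\dblecurly{t}}$ is algebraic over $\bb{Q}_p$, hence over $K$ (as $K/\bb{Q}_p$ is algebraic), and so lies in $K$, while clearly $K \subseteq k_{K\dblecurly{t}}$.

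I do not anticipate any serious obstacle: part (i) is a direct ultrametric computation, and part (ii) relies on the standard fact that a finite extension of a complete discretely valued field is controlled by its ramification index and residue degree. The only point requiring care is the identification of $\res L / \res K$ as a subextension of $\res K((t))/\res K$, which follows from the compatibility $\nu_L = \nu|_L$ and hence $\roi_L = \roi_{K\dblecurly{t}} \cap L$, $\frak{p}_L = \frak{p}_{K\dblecurly{t}} \cap L$.
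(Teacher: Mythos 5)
Your proof is correct and follows essentially the same route as the paper's: for $K((t))$ the valuation and residue map are used to force an algebraic element down into $K$, and for $K\dblecurly{t}$ one observes $e(M/K)=1$ from the value groups and $f(M/K)=1$ by invoking the first part on $\res{K}((t))$. The only cosmetic difference is in part one: the paper notes that a finite subextension $L\subseteq K[[t]]$ is a field on which the residue map $K[[t]]\to K$ restricts to a $K$-algebra injection, forcing $L=K$, whereas you argue element-by-element (valuation $\geq 0$, then subtract the constant term and use the minimal polynomial); both are valid and of comparable length.
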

\begin{proof}
Suppose that there is an intermediate extension $K((t))\ge L\ge K$ with $L$ finite over $K$. Then each element of $L$ is integral over $K[[t]]$, hence belongs to $K[[t]]$. The residue map $K[[t]]\to K$ is non-zero on $L$, hence restricts to a $K$-algebra injection $L\into K$. This implies $L=K$.

Now suppose $K$ is a complete discrete valuation field and that we have an intermediate extension $K\dblecurly{t}\ge M\ge K$ with $M$ finite over $K$. Then $M$ is a complete discrete valuation field with $e(M/K)=1$, since $e(K\dblecurly{T}/K)=1$. Passing to the residue fields and applying the first part of the proof to $\res{K}((t))$ implies $f(M/K)=1$. Therefore $|M:K|=1$, as required.
\end{proof}

Let $L$ be a two-dimensional local field of mixed characteristic. The algebraic closure of $\bb{F}_p$ inside $\res{L}$ is finite over $\bb{F}_p$ (it is the coefficient subfield of $\res{L}$); so, if $k$ is any finite extension of $\bb{Q}_p$ inside $L$, then $f(\res{k}/\bb{F}_p)$ is bounded above. But also $e(k/\bb{Q}_p)<e(L/\bb{Q}_p)<\infty$ is bounded above. It follows that $k_L$ is a finite extension of $\bb{Q}_p$.

Thus the process of taking constant subfields canonically associates to any two-dimensional local field $L$ of mixed characteristic a finite extension $k_L$ of $\bb{Q}_p$.

\begin{lemma}\label{lemma_constant_extension_of_standard_field}
Suppose $K$ is a complete discrete valuation field and $\Omega/K$ is a field extension with subextensions $F,K'$ such that $K'/K$ is finite and separable, and $F$ is $K$-isomorphic to $K\dblecurly{T}$. Then the composite extension $FK'$ is $K$-isomorphic to $K'\dblecurly{T}$.
\end{lemma}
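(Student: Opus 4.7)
The plan is to produce a single $F$-algebra $F[X]/(f)$ that realises both $FK'$ and $K'\dblecurly{T}$. Fix a primitive element $\alpha\in K'$ for the separable extension $K'/K$, with minimal polynomial $f\in K[X]$ of degree $n=[K':K]$; then $FK'=F[\alpha]$ inside $\Omega$, while the coefficient embedding $F\cong K\dblecurly{T}\into K'\dblecurly{T}$ together with $K'\into K'\dblecurly{T}$ yields $F[\alpha]\subseteq K'\dblecurly{T}$ as well.

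First I would show that $f$ remains irreducible over $F$. Any monic factor $g\in F[X]$ of $f$ has coefficients which are elementary symmetric polynomials in a subset of the roots of $f$ (which are algebraic over $K$), so those coefficients are algebraic over $K$; since they also lie in $F=K\dblecurly{T}$, the preceding lemma (relative algebraic closedness of $K$ in $K\dblecurly{T}$) forces them to lie in $K$. Thus $g\in K[X]$, and irreducibility of $f$ over $K$ makes $g$ trivial. Hence $[F(\alpha):F]=n$ and $F[\alpha]\cong F[X]/(f)$ as $F$-algebras.

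Second, I would verify that the inclusion $F[\alpha]\subseteq K'\dblecurly{T}$ is actually an equality. Expanding a general element $\eta=\sum_i a_iT^i\in K'\dblecurly{T}$ in the $K$-basis $\{1,\alpha,\dots,\alpha^{n-1}\}$ of $K'$ gives $a_i=\sum_{j=0}^{n-1}c_{i,j}\alpha^j$ with $c_{i,j}\in K$, so formally $\eta=\sum_{j=0}^{n-1}\alpha^j\xi_j$ with $\xi_j=\sum_i c_{i,j}T^i$. The key point is that each $\xi_j$ actually belongs to $F=K\dblecurly{T}$, i.e.\ that $v_K(c_{i,j})$ is bounded below (over $i$) and tends to $\infty$ as $i\to-\infty$. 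This follows from the standard fact that on a finite-dimensional vector space over a complete non-Archimedean field all $K$-valuations are equivalent: the sup-norm with respect to the chosen basis and the valuation $v_{K'}|_{K'}$ differ by a bounded amount, so the defining conditions on the $a_i$ translate into the required conditions on the $c_{i,j}$.

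Combining these, both $FK'$ (inside $\Omega$) and $K'\dblecurly{T}$ are identified with $F[X]/(f)$ via $\alpha\mapsto X$, yielding a common $F$-algebra isomorphism $FK'\isoto K'\dblecurly{T}$ which is \emph{a fortiori} a $K$-isomorphism. The main obstacle is the second step: one must convert the valuation-theoretic conditions that define $K'\dblecurly{T}$ (decay and bounded-belowness of the $K'$-valuations of the coefficients) into the corresponding conditions on the $K$-coordinates of each $a_i$, and this is precisely where completeness of $K$ enters, through the equivalence of non-Archimedean norms on a finite-dimensional $K$-vector space.
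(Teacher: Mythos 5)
Your proof is correct, but it takes a genuinely different route from the paper's. The paper passes to the Galois closure $K''$ of $K'$ over $K$: since the previous lemma gives $K''\cap F=K$ and $K''/K$ is Galois, the fields $K''$ and $F$ are linearly disjoint over $K$, so $FK''\cong F\otimes_K K''\cong K''\dblecurly{T}$, and one then descends back down to $K'$ by restriction. You instead stay with $K'$ itself: you fix a primitive element $\alpha$, prove that its minimal polynomial $f$ remains irreducible over $F$ by the symmetric-function trick (the coefficients of any monic factor $g\mid f$ in $F[X]$ are symmetric in algebraic elements, hence algebraic over $K$ and, by relative algebraic closedness of $K$ in $K\dblecurly{T}$, lie in $K$), and then verify directly that $F[\alpha]=K'\dblecurly{T}$ by unpacking coefficients in a $K$-basis and invoking equivalence of norms on the finite-dimensional $K$-space $K'$. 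The two approaches buy different things: the paper's argument is quicker on the page (it outsources the irreducibility to the linear-disjointness theorem for Galois extensions) but leaves ``$F\otimes_K K''\cong K''\dblecurly{T}$'' as ``easily seen,'' which in fact hides essentially the same norm-equivalence surjectivity check you carry out explicitly; your argument avoids introducing the Galois closure altogether, makes the identification completely concrete, and uses only the primitive element theorem for $K'/K$ separable. Both are valid; yours is arguably the more self-contained version, at the cost of a slightly longer explicit computation.
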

\begin{proof}
Let $K''$ be the Galois closure of $K'$ over $K$ (enlarging $\Omega$ if necessary); then the previous lemma implies that $K''\cap F=K$ and therefore the extensions $K'',F$ are linearly disjoint over $K$ (here it is essential that $K''/K$ is Galois). This implies that $FK''$ is $K$-isomorphic to $F\otimes_K K''$, which is easily seen to be $K$-isomorphic to $K''\dblecurly{T}$. The resulting isomorphism $\sigma:FK''\to K''\dblecurly{T}$ restricts to an isomorphism $FK'\to \sigma(K')\dblecurly{T}$, and this final field is isomorphic to $K'\dblecurly{T}$.
\end{proof}

\begin{lemma}\label{lemma_existence_of_standard_subfield}
Suppose $L$ is a two-dimensional local field of mixed characteristic. Then there is a two-dimensional local field $M$ contained inside $L$, such that $L/M$ is a finite extension and
\begin{enumerate}
\item $\res{M}=\res{L}$;
\item $k_M=k_L$;
\item $M$ is $k_M$-isomorphic to $k_M\dblecurly{T}$.
\end{enumerate}
\end{lemma}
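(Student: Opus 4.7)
The plan is to build $M$ as the topological closure in $L$ of the subfield generated by $k_L$ and a carefully chosen lift of a uniformizer of $\res{L}$, and then to identify this closure with $k_L\dblecurly{T}$ via the Gauss-completion description of Example \ref{example_double_curly_field}. The first preliminary step is to check that $\res{k_L}$ equals the algebraic closure $\bb{F}_q$ of $\bb{F}_p$ inside the local field $\res{L}$. One inclusion is automatic from the finiteness of $k_L/\bb{Q}_p$; conversely, Hensel-lifting the minimal polynomial of a generator of $\bb{F}_q/\bb{F}_p$ inside $\roi_L$ produces an element algebraic over $\bb{Q}_p$, hence in $k_L$. So $\res L = \bb{F}_q((u))$ for any uniformizer $u$ of $\res L$.

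Now choose any lift $t \in \roi_L$ of $u$. Since $\roi_{k_L}$ reduces into $\bb{F}_q$, which contains no uniformizer of $\res{L}$, the element $t$ cannot belong to $k_L$, and is therefore transcendental over $k_L$ (which is algebraically closed in $L$ by definition). I would next verify that the restriction of $\nu_L$ to $k_L(t)$ agrees with $e(L/k_L)$ times the Gauss valuation relative to $\nu_{k_L}$: for $f = \sum_i a_i t^i \in \roi_{k_L}[t]$, setting $v = \min_i \nu_{k_L}(a_i)$ and normalising by $\pi_{k_L}^{-v}$ leaves an element of $\roi_L$ whose residue $\sum_i \bar a_i u^i$ is nonzero in $\res{L}$, since $u$ is transcendental over $\bb{F}_q$; hence $\nu_L(f) = e(L/k_L)v$, and this extends to all of $k_L(t)$.

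Take $M$ to be the closure of $k_L(t)$ inside $L$. By the previous paragraph $M$ is naturally the completion of $k_L(t) \cong k_L(T)$ for the Gauss valuation, and Example \ref{example_double_curly_field} identifies this completion with $k_L\dblecurly{T}$; the assignment $T \mapsto t$ therefore yields an isomorphism $k_L\dblecurly{T} \isoto M$, establishing (iii). Property (ii) follows from the preceding lemma applied to $K = k_L$, and (i) is $\res M = \res{k_L}((t \bmod \frak p_M)) = \bb{F}_q((u)) = \res{L}$. Finally $L/M$ is finite since $f(L/M) = 1$ and $e(L/M) \le e(L/k_L) < \infty$. The one delicate point is the identification $M \cong k_L\dblecurly{T}$: it is essentially uniqueness of completion of a discretely valued field, but the two normalisations of the valuation (the restriction of $\nu_L$ on one side, the canonical discrete valuation of $k_L\dblecurly{T}$ on the other) must be tracked with care.
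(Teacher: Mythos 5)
The key step of your argument fails: the closure of $k_L(t)$ in $L$ is \emph{not} isomorphic to $k_L\dblecurly{T}$, and in fact it is too small to satisfy conclusion (i). The closure of $k_L(t)$ in $L$ is the completion of $k_L(t)$ with respect to (the restriction of $\nu_L$, equivalently) the Gauss valuation. The valuation ring of $k_L(t)$ for the Gauss valuation is $\roi_{k_L}[t]_{(\pi_{k_L})}$, whose residue field is the \emph{rational function field} $\bb{F}_q(u)$, and completion does not enlarge the residue field. Thus your $M$ has $\res{M}=\bb{F}_q(u)\subsetneq\bb{F}_q((u))=\res{L}$, so (i) fails, $f(L/M)$ is infinite, and $L/M$ is not a finite extension. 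Note also that $k_L\dblecurly{T}$ has residue field $\res{k}_L((T))$; Example \ref{example_double_curly_field} describes it as the completion of $\Frac(\roi_{k_L}[[T]])$ --- power series, not polynomials --- at the height-one prime $\pi_{k_L}\roi_{k_L}[[T]]$, which is a genuinely bigger field than the Gauss completion of $k_L(T)$. Concretely, already in $L=\bb{Q}_p\dblecurly{t}$ with $k_L=\bb{Q}_p$ and $t$ the tautological lift, any element $\sum_{i\geq 0}a_it^i$ (with Teichm\"uller-type digits $a_i$) whose reduction is transcendental over $\bb{F}_p(t)$ lies outside the closure of $\bb{Q}_p(t)$.

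The missing ingredient is a lifting theorem for complete discrete valuation rings with imperfect residue field: one must lift the whole embedding of residue fields $\bb{F}_p((t))\into\res{L}$, not merely a single transcendental element. This is what the paper's proof does, invoking the structure theorem \cite[Proposition 5.6]{Fesenko2002} to produce an embedding $j:\bb{Q}_p\dblecurly{t}\into L$ of complete discrete valuation fields lifting the chosen unramified embedding $\bb{F}_p((t))\into\res{L}$; one then takes $M=j(\bb{Q}_p\dblecurly{t})\cdot k_L$ and applies Lemma \ref{lemma_constant_extension_of_standard_field}. Your preliminary observations --- that $\res{k}_L$ is the algebraic closure of $\bb{F}_p$ in $\res{L}$, that a lift $t$ of a uniformizer of $\res{L}$ is transcendental over $k_L$, and the Gauss-valuation computation --- are all correct and are in the spirit of the proof, but they do not by themselves produce a big enough subfield; the closure-of-a-simple-extension construction must be replaced by the Cohen/Teichm\"uller-style lifting.
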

\begin{proof}
The residue field of $L$ is a local field of characteristic $p$, and therefore there is an isomorphism $\res{L}\cong\bb{F}_q((t))$; using this we may define an embedding $\bb{F}_p((t))\into\res{L}$, such that $\res{L}/\bb{F}_p((t))$ is an unramified, separable extension. Since $\bb{Q}_p\dblecurly{t}$ is an absolutely unramified discrete valuation field with residue field $\bb{F}_p((t))$, a standard structure theorem of complete discrete valuation fields \cite[Proposition 5.6]{Fesenko2002} implies that there is an embedding of complete discrete valuation fields $j:\bb{Q}_p\dblecurly{t}\into L$ which lifts the chosen embedding of residue fields. Set $F=j(\bb{Q}_p\dblecurly{t})$, and note that $f(L/F)=|\res{L}:\bb{F}_p((t))|=\log_p(q)$ and $e(L/F)=\nu_L(p)<\infty$; so $L/F$ is a finite extension.

Now apply the previous lemma with $K=\bb{Q}_p$ and $K'=k_L$ to obtain $M=FK'\cong k_L\dblecurly{t}$. Moreover, Hensel's lemma implies that $L$, and therefore $k_L$, contains the $q-1$ roots of unity; so $\res{k}_L\res{F}=\bb{F}_q\cdot\bb{F}_p((t))=\res{L}$, and therefore $\res{M}=\res{L}$.
\end{proof}

We will frequently use arguments similar to those of the previous lemma in order to obtain suitable subfields of $L$.

\begin{definition}
A two-dimensional local field $L$ of mixed characteristic is said to be {\em standard} if and only if $e(L/k_L)=1$.
\end{definition}

The purpose of the definition is to provide a `co-ordinate'-free definition of the class of fields we have already considered:

\begin{corollary}\label{corollary_structure_of_standard_fields}
$L$ is standard if and only if there is a $k_L$-isomorphism $L\cong k_L\dblecurly{t}$. If $L$ is standard and $k'$ is a finite extension of $k_L$, then $Lk'$ is also standard, with constant subfield $k'$.
\end{corollary}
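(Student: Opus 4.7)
The plan is to deduce both assertions more-or-less directly from Lemmas~\ref{lemma_constant_extension_of_standard_field} and \ref{lemma_existence_of_standard_subfield} together with Example~\ref{example_double_curly_field}, which has already recorded that $e(K\dblecurly{t}/K)=1$ and that the constant subfield of $K\dblecurly{t}$ is $K$ (when $K$ is a local field of characteristic zero). There is no real obstacle; the main point is to assemble the existing structure results correctly.

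For the $(\Leftarrow)$ direction of the first equivalence, any $k_L$-isomorphism $L\cong k_L\dblecurly{t}$ immediately gives $e(L/k_L)=e(k_L\dblecurly{t}/k_L)=1$ by the computation in Example~\ref{example_double_curly_field}, so $L$ is standard. For the $(\Rightarrow)$ direction, assume $e(L/k_L)=1$. Apply Lemma~\ref{lemma_existence_of_standard_subfield} to produce a two-dimensional local subfield $M\subseteq L$ with $L/M$ finite, $\res M=\res L$, $k_M=k_L$, and a $k_M$-isomorphism $M\cong k_M\dblecurly{T}$. Then $f(L/M)=|\res L:\res M|=1$, and since both $e(M/k_L)=1$ (by construction of $M$) and $e(L/k_L)=1$ (by hypothesis), multiplicativity of ramification indices gives $e(L/M)=1$. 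Hence $|L:M|=e(L/M)f(L/M)=1$, so $L=M\cong_{k_L} k_L\dblecurly{T}$, as required.

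For the second assertion, assume $L$ is standard; by the first part we have a $k_L$-isomorphism $L\cong k_L\dblecurly{T}$. Applying Lemma~\ref{lemma_constant_extension_of_standard_field} with $K=k_L$, $K'=k'$, and $F=L$ (all inside a common field $\Omega$ containing $L$ and $k'$) yields a $k_L$-isomorphism $Lk'\cong k'\dblecurly{T}$. The relative algebraic closedness result used in the proof of Lemma~\ref{lemma_existence_of_standard_subfield} (namely, that the constant subfield of $K\dblecurly{T}$ is $K$ when $K$ is a characteristic-zero local field) identifies the constant subfield of $k'\dblecurly{T}$ as $k'$, so $k_{Lk'}=k'$ under this isomorphism. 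Finally, $e(Lk'/k_{Lk'})=e(k'\dblecurly{T}/k')=1$, so $Lk'$ is standard. The only subtlety to watch is the bookkeeping of which base field the isomorphisms respect, but everything is compatible because Lemma~\ref{lemma_constant_extension_of_standard_field} is stated with exactly the right functoriality.
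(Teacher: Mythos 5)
Your proof is correct and follows essentially the same route as the paper: use $e(k_L\dblecurly{t}/k_L)=1$ for one direction, invoke Lemma~\ref{lemma_existence_of_standard_subfield} to produce $M$ and then compare ramification indices to force $L=M$ for the other, and deduce the second claim from Lemma~\ref{lemma_constant_extension_of_standard_field}. You spell out a few more intermediate steps (in particular the $ef$ bookkeeping and the identification of the constant subfield of $k'\dblecurly{T}$), but the argument is the same.
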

\begin{proof}
Since $e(k_L\dblecurly{t}/k_L)=1$, the field $L$ is standard if it is isomorphic to $k_L\dblecurly{t}$. Conversely, by the previous lemma, there is a standard subfield $M\le L$ with $k_M=k_L$ and $\res{M}=\res{L}$; then $e(M/k_M)=1$ and $e(L/k_L)=1$ (since we assumed $L$ was standard), so that $e(L/M)=1$ and therefore $L=M$.

The second claim follows from lemma \ref{lemma_constant_extension_of_standard_field}.
\end{proof}

\begin{remark}\label{remark_local_parameter_determines_isomorphism}
A {\em first local parameter} of a two-dimensional local field $L$ is an element $t\in \roi_L$ such that $\res{t}$ is a uniformiser for the local field $\res{L}$. For example, $t$ is a first local parameter of $K\dblecurly{t}$. More importantly, if $L$ is standard, then any isomorphism $k_L\dblecurly{t}\isoto L$ is determined by the image of $t$, and conversely, $t$ may be sent to any first local parameter of $L$. This follows from similar arguments to those found in lemma \ref{lemma_existence_of_standard_subfield} above and \ref{lemma_structure_of_differential_forms_standard_fields} below; see e.g. \cite[Proposition 5.6]{Fesenko2002} and \cite{Madunts1995}. We will abuse notation in a standard way, by choosing a first local parameter $t\in L$ and then identifying $L$ with $k_L\dblecurly{t}$.
\end{remark}

\subsubsection{The residue map for standard fields.}
Here we define a residue map for standard two-dimensional fields and investigate its main properties. As in the equal characteristic case, we work in the relative situation, with a fixed standard two-dimensional local field $L$ of mixed characteristic and a chosen (one-dimensional) local field $K\le L$. It follows that $K$ is intermediate between $\bb{Q}_p$ and the constant subfield $k_L$.

We start by studying spaces of differential forms. Note that if we choose a first local parameter $t\in L$ to induce an isomorphism $L\cong k_L\dblecurly{t}$, then there is a well-defined $k_L$-linear derivative $\frac{d}{dt}:L\to L,\,\sum_ia_it^i\mapsto\sum_iia_it^{i-1}$.

\begin{lemma}\label{lemma_structure_of_differential_forms_standard_fields}
Let $t$ be any first local parameter of $L$. Then $\Omega_{\roi_L/\roi_K}^\sub{sep}$ decomposes as a direct sum \[\Omega_{\roi_L/\roi_K}^\sub{sep}=\roi_L dt\oplus\Tors(\Omega_{\roi_L/\roi_K}^\sub{sep})\] with $\roi_L dt$ free, and $\Tors(\Omega_{\roi_L/\roi_K}^\sub{sep})\cong\Omega_{\roi_{k_L}/\roi_K}\otimes_{\roi_{k_L}}\roi_L$. Hence $\Omega_{L/K}^\sub{cts}$ is a one-dimensional vector space over $L$ with basis $dt$.
\end{lemma}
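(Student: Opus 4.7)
The plan is to adapt the argument of lemma \ref{lemma_differential_forms_of_K[[t]]} to the mixed-characteristic setting. Two new complications arise: first, $\roi_{k_L}/\roi_K$ is a genuine finite extension of discrete valuation rings, so $\Omega_{\roi_{k_L}/\roi_K}$ is a nontrivial torsion $\roi_{k_L}$-module which must account for the torsion summand; second, the first local parameter $t$ is a \emph{unit} of $\roi_L$ rather than the uniformizer, so $\pi_{k_L}$-adic approximation of $f\in\roi_L$ by finite Laurent polynomials in $t$ breaks down on the high-$t$ side.

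First I would route everything through an auxiliary Noetherian local ring. Use corollary \ref{corollary_structure_of_standard_fields} to identify $\roi_L$ with $\roi_{k_L}\dblecurly{t}$, and observe that $\roi_L$ is the $\pi_{k_L}$-adic completion of the localization of $B:=\roi_{k_L}[[t]]$ at its height-one prime $\pi_{k_L}B$. Since $B$ is itself the completion of the finitely generated $\roi_K$-algebra $C:=\roi_{k_L}[t]$ at $(\pi_{k_L},t)$, and $C$ is smooth over $\roi_{k_L}$, remark \ref{remark_completion_of_fin_gen_algebra} combined with the split fundamental sequence for $\roi_K\to\roi_{k_L}\to C$ yields
\[\Omega^\sub{sep}_{B/\roi_K}\;=\;\Omega_{\roi_{k_L}/\roi_K}\otimes_{\roi_{k_L}}B\;\oplus\;B\,dt.\]
Tensoring with $\roi_L$ produces the candidate
\[N\;:=\;\Omega_{\roi_{k_L}/\roi_K}\otimes_{\roi_{k_L}}\roi_L\;\oplus\;\roi_L\,dt,\]
which is $\pi_{k_L}$-adically separated because the first summand is killed by the different of $\roi_{k_L}/\roi_K$ and the second is free.

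The next step is to set up mutually inverse $\roi_L$-linear maps between $N$ and $\Omega^\sub{sep}_{\roi_L/\roi_K}$. The obvious $\psi\colon N\to\Omega^\sub{sep}_{\roi_L/\roi_K}$ sends $dt$ to $dt$ and $(da)\otimes f$ to $f\,da$. For the other direction, the identification $\roi_L\cong\roi_{k_L}\dblecurly{t}$ provides an $\roi_{k_L}$-linear formal derivative $\frac{d}{dt}\colon\roi_L\to\roi_L$, and I would define an $\roi_K$-linear derivation
\[\eta\colon\roi_L\to N,\qquad f=\sum_i a_i t^i\;\longmapsto\;\sum_i(da_i)\otimes t^i\;+\;\frac{df}{dt}\,dt.\]
The infinite sum on the right makes sense in the first summand because some $\pi_{k_L}^m$ annihilates $\Omega_{\roi_{k_L}/\roi_K}$: expanding each $da_i$ against a fixed finite generating set recasts the sum as one representing a genuine element of $\roi_L/\pi_{k_L}^m\roi_L$. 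A routine Leibniz calculation shows $\eta$ is a derivation and so induces $\tilde\eta\colon\Omega^\sub{sep}_{\roi_L/\roi_K}\to N$; the composite $\tilde\eta\circ\psi=\mathrm{id}_N$ follows immediately from $\eta(a)=da\otimes 1$ for $a\in\roi_{k_L}$ and $\eta(t)=dt$.

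The main obstacle is the other composite $\psi\circ\tilde\eta=\mathrm{id}$, which is equivalent to the expansion
\[df\;=\;\sum_i t^i\,da_i\;+\;\frac{df}{dt}\,dt\qquad\text{in }\Omega^\sub{sep}_{\roi_L/\roi_K}\]
for all $f=\sum a_i t^i\in\roi_L$. The identity is trivial for finite Laurent polynomials, and it holds for $f\in B$ directly from the explicit description of $\Omega^\sub{sep}_{B/\roi_K}$ found above; it extends to $f\in B[t^{-1}]$ by the Leibniz rule together with invertibility of $t$ in $\roi_L$. To promote it to all of $\roi_L$ I would use $\pi_{k_L}$-adic continuity in $f$ of both sides (the universal derivation $d$ is $\pi_{k_L}$-adically continuous up to a single-power loss coming from Leibniz on $\pi_{k_L}$, which is harmless since $\Omega^\sub{sep}_{\roi_L/\roi_K}$ is $\pi_{k_L}$-adically separated), combined with the crucial density of $B[t^{-1}]$ in $\roi_L$: for $f=\sum a_i t^i$ with $a_i\to 0$, choosing $N(n)$ so that $a_i\in\pi_{k_L}^n\roi_{k_L}$ for $i<-N(n)$ exhibits $\sum_{i\ge -N(n)}a_i t^i\in B[t^{-1}]$ as a $\pi_{k_L}$-adic approximation to $f$. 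One cannot truncate the high-$t$ tail, precisely because $t$ is a unit in $\roi_L$, which is exactly why the power-series ring $B$ is indispensable. Once the direct sum decomposition is in hand, the torsion summand is automatically $\Tors(\Omega^\sub{sep}_{\roi_L/\roi_K})$ ($\pi_{k_L}^m$-torsion versus free), and the final statement about $\Omega^\sub{cts}_{L/K}$ drops out by inverting $\pi_{k_L}$, which kills the torsion part and leaves $L\,dt$.
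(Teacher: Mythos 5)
Your construction of $\eta$ and the easy half $\tilde\eta\circ\psi=\mathrm{id}$ are fine, and the density of $B[t^{-1}]$ in $\roi_L$ is correct. The gap is in the claim that the identity $df=\sum_i t^i\,da_i+\frac{df}{dt}\,dt$ holds in $\Omega^\sub{sep}_{\roi_L/\roi_K}$ for $f\in B=\roi_{k_L}[[t]]$ ``directly from the explicit description of $\Omega^\sub{sep}_{B/\roi_K}$.'' That description lives in $\Omega^\sub{sep}_{B/\roi_K}=\Omega_{B/\roi_K}/\bigcap_n\frak{m}_B^n\Omega_{B/\roi_K}$, and to transfer it to $\Omega^\sub{sep}_{\roi_L/\roi_K}$ you would need the composite $\Omega_{B/\roi_K}\to\Omega_{\roi_L/\roi_K}\onto\Omega^\sub{sep}_{\roi_L/\roi_K}$ to kill $\bigcap_n\frak{m}_B^n\Omega_{B/\roi_K}$. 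But $B\to\roi_L$ is \emph{not} a local homomorphism: $\frak{m}_B=(\pi_{k_L},t)$ and $t$ becomes a unit in $\roi_L$, so $\frak{m}_B\roi_L=\roi_L$ and the $\frak{m}_B$-adic topology gives you no control over $\Omega^\sub{sep}_{\roi_L/\roi_K}$, which is separated with respect to the $\pi_{k_L}$-adic topology only. Equivalently: the $t$-adic partial sums $\sum_{i<n}a_it^i$ do \emph{not} converge to $f$ $\pi_{k_L}$-adically (this is the high-$t$ tail problem you correctly flagged, but it bites already inside $B$, not only for general $f\in\roi_L$), so there is no continuity argument that promotes the trivial polynomial case to $f\in B$. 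In fact, proving that the kernel of $\Omega_{B/\roi_K}\onto\Omega^\sub{sep}_{B/\roi_K}$ dies in $\Omega^\sub{sep}_{\roi_L/\roi_K}$ is essentially equivalent to the lemma's assertion for $f\in B$, so the argument is circular at exactly this point.

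This is the precise difficulty the paper's proof is designed to overcome, and it uses a device your proposal never introduces: the lifting $H\colon\res{L}\to\mult{\roi}_L\cup\{0\}$ with the Teichm\"uller-type compatibility $H(\sum a_i^p\res{t}^i)=\sum H(a_i)^pt^i$. Iterating this property exhibits $H(a)$ as a \emph{finite} sum $\sum_{i=0}^{p^n-1}H(a_i)^{p^n}t^i$ whose non-$dt$ contribution to $dH(a)$ is divisible by $p^n$; the infinite $t$-tail is tamed by the hidden $p^n$-th-power structure rather than by $\pi_{k_L}$-adic truncation in $t$. The paper then reduces general $f\in\roi_L$ to $H$-lifts by $\pi_{k_L}$-adic expansion in $\pi$, and finally handles $K<k_L$ via the tower sequence from lemma \ref{lemma_differential_forms_of_tower}. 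Your framework---routing through $B$, setting up $N$ and $\eta$, and finishing by density on $B[t^{-1}]$---could likely be salvaged by inserting the $H$-lift computation to prove the base-case identity for $f\in B$, but as written the proof of $\psi\circ\tilde\eta=\mathrm{id}$ has a genuine hole.
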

\begin{proof}
First suppose that $K=k_L$ is the constant subfield of $L$. Then we claim that for any $f\in\roi_L$, one has $df=\frac{df}{dt}dt$ in $\Omega_{\roi_L/\roi_K}^\sub{sep}$.

Standard theory of complete discrete valuation fields (see e.g. \cite{Madunts1995}) implies that there exists a map $H:\res{L}\to\mult{\roi}_L\cup\{0\}$ with the following properties:
\begin{enumerate}
\item $H$ is a lifting, i.e. $\res{H}(a)=a$ for all $a\in\res{L}$;
\item $H(\res{t})=t$;
\item for any $a_0,\dots,a_{p-1}\in \res{L}$, one has $H(\sum_{i=0}^{p-1}a_i^p\res{t}^i)=\sum_{i=0}^{p-1}H(a_i)^pt^i$.
\end{enumerate}
The final condition replaces the Teichmuller identity $H(a^p)=a^p$ which ones sees in the perfect residue field case. We will first prove our claim for elements of the form $f=H(a)$, for $a\in\res{L}$. Indeed, for any $n>0$, we expand $a$ using the $p$-basis $\res{t}$ to write \[a=\sum_{i=0}^{p^n-1}a_i^{p^n}\res{t}^i\] for some $a_0,\dots,a_{p^{n-1}}\in\res{L}$. Lifting, and using the Teichmuller property (iii) of $H$ $n$-times, obtains \[f=\sum_{i=0}^{p^n-1}H(a_i)^{p^n}t^i.\] Now apply the universal derivative to reveal that \[df=\sum_{i=0}^{p^n-1}H(a_i)^{p^n}it^{i-1}dt+p^nH(a_i)^{p^n-1}t^id(H(a_i)).\] We may apply $\frac{d}{dt}$ in a similar way, and it follows that $df-\frac{df}{dt}dt\in p^n\Omega_{\roi_L/\roi_K}$. Letting $n\to\infty$ gives us $df=\frac{df}{dt}dt$ in $\Omega_{\roi_L/\roi_K}^\sub{sep}$.

Now suppose that $f\in\roi_L$ is not necessarily in the image of $H$. For any $n$, we may expand $f$ as a sum \[f=\sum_{i=0}^nf_i\pi^i+g\pi^{n+1}\] where $\pi$ is a uniformiser of $K$ (also a uniformiser of $L$), $f_0,\dots,f_n$ belong to the image of $H$, and $g\in\roi_L$. Applying the universal derivative obtains \[df=\sum_{i=0}^n\frac{df_i}{dt}\pi^idt+\pi^{n+1}dg,\] and computing $\frac{df}{dt}$ gives something similar. We again let $n\to\infty$ to deduce that $df=\frac{df}{dt}dt$ in $\Omega_{\roi_L/\roi_K}^\sub{sep}$. This completes the proof of our claim.

This proves that $dt$ generates $\Omega_{\roi_L/\roi_K}^\sub{sep}$, so we must now prove that it is not torsion. But the derivative $\frac{d}{dt}$ induces an $\roi_L$-linear map $\Omega_{\roi_L/\roi_K}\to \roi_L$ which descends to the maximal separated quotient and send $dt$ to $1$; this is enough. This completes the proof in the case $k_L=K$.

Now consider the general case $k_L\ge K$. Using the isomorphism $L\cong k_L\dblecurly{t}$, we set $M=K\dblecurly{t}$. The inclusions $\roi_K\le\roi_M\le\roi_L$, lemma \ref{lemma_differential_forms_of_tower}, and the first case of this proof applied to $K=k_M$, give an exact sequence of differential forms \begin{equation} 0\to\Omega_{\roi_M/\roi_K}^\sub{sep}\otimes_{\roi_M}\roi_L\to\Omega_{\roi_L/\roi_K}^\sub{sep}\to\Omega_{\roi_L/\roi_M}\to 0.\label{2}\end{equation} Furthermore, the isomorphism $L\cong M\otimes_Kk_L$ restricts to an isomorphism $\roi_L\cong\roi_M\otimes_{\roi_K}\roi_{k_L}$, and base change for differential forms gives $\Omega_{\roi_L/\roi_M}\cong\Omega_{\roi_{k_L}/\roi_K}\otimes_{\roi_{k_L}}\roi_L$; this isomorphism is given by the composition \[\Omega_{\roi_{k_L}/\roi_K}\otimes_{\roi_{k_L}}\roi_L\to\Omega_{\roi_L/\roi_K}\to\Omega_{\roi_L/\roi_M}.\] But this factors through $\Omega_{\roi_L/\roi_K}^\sub{sep}$, which splits (\ref{2}) and completes the proof.
\end{proof}

We may now define the {\em relative residue map for $L/K$} similarly to the equal characteristic case: \[\RES_L:\Omega_{L/K}^\sub{cts}\to k_L,\quad\omega=fdt\mapsto-\mbox{coeft}_{t^{-1}}(f)\] where the notation means that we expand $f$ in $k_L\dblecurly{t}$ and take the coefficient of $t^{-1}$. Implicit in the definition is the choice of an isomorphism $L\cong k_L\dblecurly{t}$ fixing $k_L$. The twist by $-1$ is necessary for the future reciprocity laws.

\begin{proposition}\label{proposition_residue_map_well_defined}
$\RES_L$ is well-defined, i.e. it does not depend on the chosen isomorphism $L\cong k_L\dblecurly{t}$.
\end{proposition}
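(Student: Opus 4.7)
The plan is to adapt the equal-characteristic argument given after Lemma \ref{lemma_differential_forms_of_K[[t]]}, taking care with the convergence issues that arise in the mixed-characteristic setting. By Remark \ref{remark_local_parameter_determines_isomorphism}, any $k_L$-isomorphism $L\cong k_L\dblecurly{t}$ is determined by the image of $t$, which may be any first local parameter of $L$; so it suffices to show that $\RES_L(\omega)$ is the same when computed in terms of two different first local parameters $t,T\in L$. Since $\Omega_{L/K}^\sub{cts}$ is spanned over $L$ by $dt$ (Lemma \ref{lemma_structure_of_differential_forms_standard_fields}) and $dT=(dT/dt)\,dt$, this reduces to the identity
\[
\mbox{coeft}_{t^{-1}}(g\cdot dT/dt)=\mbox{coeft}_{T^{-1}}(g)\quad\text{for all }g\in L,
\]
where the two coefficients are extracted from the $t$- and $T$-expansions respectively.

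Both sides are $k_L$-linear and continuous on $L$ in the natural formal-series topology in which the $T$-expansion $g=\sum_j b_j T^j$ converges, so it suffices to verify the identity on the monomials $g=T^j$, i.e.\ to show $\mbox{coeft}_{t^{-1}}(T^j\,dT/dt)=\delta_{j,-1}$ for all $j\in\bb{Z}$. For $j\neq -1$ one uses $T^j\,dT/dt=\frac{1}{j+1}d(T^{j+1})/dt$ together with the elementary observation that $\mbox{coeft}_{t^{-1}}(dh/dt)=0$ for any $h=\sum_i c_i t^i\in L$, since the $t^{-1}$-term of $dh/dt=\sum_i ic_i t^{i-1}$ comes from $i=0$ and is zero.

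The substantive case is $j=-1$. Writing $T=\sum_i a_i t^i$, the hypothesis that $T$ is a first local parameter gives $a_1\in\mult{\roi_{k_L}}$ and $a_i\in\frak{p}_{k_L}$ for $i\leq 0$. Factor $T=a_1 t\cdot u$ with $u:=T/(a_1 t)\in\mult{\roi_L}$, and observe $\res{u}\in 1+t\res{k_L}[[t]]\subset\mult{(\res{k_L}[[t]])}$ inside $\res{L}=\res{k_L}((t))$. The multiplicativity of the logarithmic derivative yields $T^{-1}\,dT/dt=t^{-1}+u^{-1}\,du/dt$, so the task reduces to showing $\mbox{coeft}_{t^{-1}}(u^{-1}\,du/dt)=0$. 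Lifting the coefficients of $\res{u}$ arbitrarily to $\roi_{k_L}$ produces $u_1\in\mult{(\roi_{k_L}[[t]])}\subseteq\mult{\roi_L}$ with $\res{u_1}=\res{u}$; set $w:=u\,u_1^{-1}\in 1+\pi_{k_L}\roi_L$. Since $u=u_1 w$,
\[
u^{-1}\,du/dt=u_1^{-1}\,du_1/dt+w^{-1}\,dw/dt.
\]
The first summand lies in $\roi_{k_L}[[t]]$ (both factors do) and hence has no $t^{-1}$-coefficient. For the second, the $p$-adic series $\log w=\sum_{n\geq 1}(-1)^{n+1}(w-1)^n/n$ converges in $\roi_L$ (using $\nu_L((w-1)^n/n)\geq n-\nu_L(n)\to\infty$), and differentiating yields $d(\log w)/dt=w^{-1}\,dw/dt$, whose $t^{-1}$-coefficient vanishes by the elementary observation of the previous paragraph.

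The main obstacle is precisely this case $j=-1$: one must decompose $u$ into a ``geometric'' factor in $\mult{(\roi_{k_L}[[t]])}$ and an ``arithmetic'' factor in $1+\pi_{k_L}\roi_L$ and handle them by completely different tools (the structure of the power series ring $\roi_{k_L}[[t]]$ for $u_1$, the $p$-adic logarithm for $w$). Compared with the equal-characteristic case, the reduction to monomials also requires greater care because the $T$-expansion of a general $g\in L$ need not converge in the $\nu_L$-topology; carrying this out rigorously either invokes a finer (Parshin-type) topology on $L$ in which the expansion does converge, or splits $g=g_++g_-$ into its positive- and negative-$T$-degree parts (with $g_-$ converging in $\nu_L$ so the monomial case extends by continuity, and with $g_+$ a formal power series in $T$ over $\roi_{k_L}$ for which $\mbox{coeft}_{t^{-1}}(g_+\cdot dT/dt)=0$ follows from the polynomial case by a reduction-mod-$\pi_{k_L}^n$ argument).
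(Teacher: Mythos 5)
Your proof follows the same blueprint as the paper's: reduce, via Remark \ref{remark_local_parameter_determines_isomorphism}, to showing $\mbox{coeft}_{t^{-1}}(T^j\,dT/dt)=\delta_{j,-1}$ for a second first local parameter $T$; dispatch $j\neq -1$ by writing $T^j\,dT/dt$ as $(j+1)^{-1}d(T^{j+1})/dt$; and for $j=-1$ factor $T$ as a ``geometric'' factor in $t\cdot\mult{(\roi_{k_L}[[t]])}$ times a principal unit, then use that $\alpha\mapsto\mbox{coeft}_{t^{-1}}(\alpha^{-1}\,d\alpha/dt)$ is an additive map on $\mult{L}$. The one genuine variation is how the principal-unit factor is handled: the paper expands it as an infinite product $\prod_j(1+b_j\pi^j)$ of elementary principal units and computes each logarithmic derivative as a visibly convergent sum of $d/dt$'s, whereas you apply the $p$-adic logarithm directly to the whole factor $w$ and observe that $w^{-1}\,dw/dt=d(\log w)/dt$ has vanishing $t^{-1}$-coefficient. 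Your version is cleaner and more conceptual; it amounts to summing the paper's computation in closed form. Both need continuity of $d/dt$ and of $\mbox{coeft}_{t^{-1}}$ in the $\nu_L$-topology, which you note correctly (using that $\nu_L((w-1)^n/n)\to\infty$).

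Where your writeup is thinner than the paper's is the reduction from general $g\in L$ to the monomials $T^j$, which you flag but do not carry out. The paper invokes the lifting argument of Lemma \ref{lemma_structure_of_differential_forms_standard_fields} (Teichm\"uller-type lifts and $p$-bases), which genuinely exploits the structure of $\roi_L$ rather than only $\nu_L$-continuity, precisely because the $T$-expansion of $g$ does not converge $\nu_L$-adically in the $j\to+\infty$ direction. Of your two proposed fixes, the ``finer topology'' route is the standard and correct one (it is the sequential ring topology of Madunts--Zhukov on $k_L\dblecurly{T}$ in which $T^j\to 0$, and $\mbox{coeft}_{t^{-1}}$ is continuous for it), but you would need to set it up or cite it. The second fix ($g=g_++g_-$) is shakier: the negative part $g_-$ does converge $\nu_L$-adically as you say, but the ``reduction-mod-$\pi_{k_L}^n$'' argument for $g_+=\sum_{j\geq 0}b_jT^j$ does not obviously truncate, since $T\bmod\pi_{k_L}^n$ can have negative $t$-degree for $n\geq 2$, so $(T\bmod\pi_{k_L}^n)^j$ picks up unboundedly negative $t$-powers and the formal sum over $j$ need not make sense in $(\roi_{k_L}/\pi_{k_L}^n)((t))$. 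That step would need repair (e.g., replacing it by the same lifting argument the paper uses, or by the topological route).
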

\begin{proof}
As we noted in remark \ref{remark_local_parameter_determines_isomorphism}, the chosen isomorphism is determined uniquely by the choice of first local parameter. Let $T\in\roi_L$ be another first local parameter. Using a similar lifting argument (which simulates continuity) to that in the first half of the previous lemma, it is enough to prove \[\mbox{coeft}_{t^{-1}}\left(T^i\frac{dT}{dt}\right)=\begin{cases}1 & \mbox{if }i=-1,\\ 0 & \mbox{if }i\neq -1.\end{cases}\] Well, when $i\neq-1$, then $T^i\frac{dT}{dt}=\frac{d}{dt}(i^{-1}T^{i+1})$, which has $t^{-1}$ coefficient $0$, since this is true for the derivative of any element.

Now, the image of $T$ in $\res{L}$ has the form $\res{T}=\sum_{i=1}^{\infty}\theta_i\res{t}^i$, with $\theta_i\in\res{k}_L$ and $\theta_1\neq0$. Hence $T\equiv\sum_{i=1}^{\infty}a_it^i\mod\frak{p}_L$, where each $a_i\in k_L$ is a lift of $\theta_i$. Expanding the difference, a principal unit, as an infinite product obtains \[T=\left(\sum_{i=1}^{\infty}a_it^i\right)\prod_{j=1}^{\infty}(1+b_j\pi^j),\] for some $b_j\in\roi_L$, with $\pi$ a uniformiser of $k_L$ (also a uniformiser of $L$); we should remark that the above summation is a formal sum in $L\cong k_L\dblecurly{t}$, while the product is a genuinely convergent product in the valuation topology on $L$.

The map \[\mult{L}\to k_L,\quad \al\mapsto\mbox{coeft}_{t^{-1}}\left(\al^{-1}\frac{d\al}{dt}\right)\] is a continuous (with respect to the valuation topologies) homomorphism, so to complete the proof it is enough to verify the identities \[\mbox{coeft}_{t^{-1}}\left(\al^{-1}\frac{d\al}{dt}\right)=\begin{cases} 1&\mbox{if }\al=\sum_{i=1}^{\infty}a_it^i,\\ 0&\mbox{if }\al=1+b_j\pi^j.\end{cases}\] The first of these identities follows exactly as in the equal characteristic case of subsection \ref{subsection_equal_characteristic}. For the second identity, we compute as follows:
\begin{align*}
(1+b_j\pi^j)^{-1}\frac{d}{dt}(1+b_j\pi^j)
	&=(1-b_j\pi^j+b_j^2\pi^{2j}+\dots)\frac{db_j}{dt}\pi^j\\
	&=\frac{db_j}{dt}\pi^j-\frac{d(2^{-1}b_j^2)}{dt}\pi^{2j}+\frac{d(3^{-1}b_j^3)}{dt}\pi^{3j}+\dots
\end{align*}
This is a convergent sum, each term of which has no $t^{-1}$ coefficient; the proof is complete.
\end{proof}

We now establish the functoriality of residues with respect to the trace map:

\begin{proposition}
Suppose that $L'$ is a finite extension of $L$, and that $L'$ is also standard. Then the following diagram commutes:
\[\begin{CD}
\Omega_{L'/K}^\sub{cts} &@>\RES_{L'}>>&k_{L'}\\
@V\Tr_{L'/L}VV && @VV\Tr_{k_{L'}/k_L}V\\
\Omega_{L/K}^\sub{cts} &@>\RES_{L}>>&k_L
\end{CD}\]
\end{proposition}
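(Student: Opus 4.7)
The plan is to split $L'/L$ through the intermediate standard field $M := L \cdot k_{L'}$ and reduce to two cases, exploiting transitivity of $\Tr_{L'/L}$ and of the residue-field trace $\Tr_{k_{L'}/k_L}$. The first case is a pure constant-field extension $L' = Lk'$ with $k'/k_L$ finite (so $k_{L'} = k'$); the second is an extension for which $k_{L'} = k_L$. That $M$ is itself standard with $k_M = k_{L'}$ follows from Lemma \ref{lemma_constant_extension_of_standard_field}, so this reduction is legitimate.

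For the constant-field case, I would invoke Lemma \ref{lemma_constant_extension_of_standard_field} to obtain compatible isomorphisms $L \cong k_L\dblecurly{t}$ and $L' \cong k_{L'}\dblecurly{t}$ sharing a single first local parameter $t$. Under these, $\Tr_{L'/L}$ acts coefficient-wise as $\Tr_{k_{L'}/k_L}$ on expansions $\sum_i a_i t^i$. Since Lemma \ref{lemma_structure_of_differential_forms_standard_fields} gives $dt$ as a simultaneous basis for both $\Omega^\sub{cts}_{L/K}$ and $\Omega^\sub{cts}_{L'/K}$, and both residues extract the negative of the $t^{-1}$-coefficient along $dt$, the compatibility is immediate.

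In the case $k_{L'} = k_L =: k$, standardness of both fields forces $e(L'/L) = e(L'/k)/e(L/k) = 1$. Moreover, for any standard $F$ the constant subfield of $\res{F}$ is $\bar{k}_F$ (read off from $F \cong k_F\dblecurly{t}$), so both $\res{L}$ and $\res{L'}$ have constant subfield $\bar{k}$; this forces $\res{L'}/\res{L}$ to be totally ramified of degree $n := [L':L]$. This puts us in close analogy with the totally ramified half of Proposition \ref{proposition_functoriality_in_equal_characteristic_case}. Choosing first local parameters $t$ of $L$ and $T$ of $L'$, the problem reduces (using $dT$ as basis for $\Omega^\sub{cts}_{L'/K}$) to verifying the single-variable identity $\mbox{coeft}_{t^{-1}}\Tr_{L'/L}(T^j\, dT/dt) = \delta_{j,-1}$ in $k$ for every $j \in \bb{Z}$; the case $j \neq -1$ follows at once by writing $T^j\, dT/dt$ as a $t$-derivative, whose $t^{-1}$-coefficient vanishes.

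The main obstacle is the $j = -1$ case of this identity: unlike the equal-characteristic setting, $\res{L'}/\res{L}$ may be wildly ramified, so we do not get a clean relation $T^n = t$ but only $T^n \equiv u\, t \pmod{\pi_k}$ for a suitable unit $u$, where $\pi_k$ is a uniformizer of $k$. To handle this I would use the Teichm\"uller-type lifting $H$ from the proof of Lemma \ref{lemma_structure_of_differential_forms_standard_fields} and reduce modulo successive powers of $\pi_k$; at each level, continuity of $\RES$ and $\Tr$ in the valuation topologies collapses the infinite Laurent-series identity to finitely many terms, and modulo $\pi_k$ the computation reduces to the totally ramified residue identity already established in the proof of Proposition \ref{proposition_functoriality_in_equal_characteristic_case}, now applied to the equal-characteristic extension $\res{L'}/\res{L}$.
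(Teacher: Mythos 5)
Your reduction through $M = Lk_{L'}$, the treatment of the constant-field case, and the reduction in the second case to the monomial identity $\operatorname{coeft}_{t^{-1}}\Tr_{L'/L}(T^j\,dT/dt) = \delta_{j,-1}$ (with the $j\neq -1$ case handled by writing $T^j\,dT/dt$ as a $t$-derivative) all match the paper in structure and are fine; the observation that standardness forces $e(L'/L)=1$ and hence $\res{L'}/\res{L}$ totally ramified of degree $n$ is correct and clarifying. You also correctly locate the obstacle at $j=-1$ in the wildly ramified (even purely inseparable) residue-field extension. The problem is how you propose to clear that obstacle.

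The mod-$\pi_k$ reduction does not close the gap. You invoke ``the totally ramified residue identity already established in the proof of Proposition \ref{proposition_functoriality_in_equal_characteristic_case}'' applied to $\res{L'}/\res{L}$; but that proposition is stated and proved only for two-dimensional local fields of equal characteristic \emph{zero}, and in its totally ramified case the proof explicitly assumes tameness, which is automatic in residue characteristic $0$ and exactly what fails for $\res{L'}/\res{L}$. The classical residue--trace compatibility for characteristic-$p$ local fields with wild (or inseparable) ramification is precisely the hard case, and it is nowhere established in the paper; assuming it here is circular with the substance of the proposition. There are also unaddressed technical issues with the reduction itself: $T^{-1}\,dT/dt$ is not integral in general (e.g.\ when $\bar t=\bar T^{\,p}$, one gets $T^{-1}\,dT/dt = p^{-1}t^{-1}$), so ``reduce mod $\pi_k$'' is not meaningful without first normalizing, and for $m>1$ the target ``identity mod $\pi_k^m$'' is a statement about the Artinian rings $\roi_{L'}/\pi_k^m$ rather than about $\res{L'}/\res{L}$, so it cannot be literally ``the same identity at each level.'' The paper avoids all of this by invoking Serre's prolongation of algebraic identities trick: replace the coefficients of $t$ as a series in $T$ by indeterminates, verify the resulting universal identity where the extension is tame (indeed reduce to $t = T^e$), and specialize. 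Your approach tries to prove the identity directly at the bottom of the tower, which is going the wrong way around the wild-ramification difficulty; to make something like your plan work you would need an independent proof of the Tate/Serre residue--trace theorem in positive characteristic, at which point you might as well cite it.
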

\begin{proof}
Using the intermediate extension $Lk_{L'}$, we may reduce this to two cases: when we have compatible isomorphisms $L\cong k_L\dblecurly{t}$, $L\cong k_{L'}\dblecurly{t}$, or when $k_L=k_{L'}$. The first case is straightforward, so we only treat the second.

By the usual `principle of prolongation of algebraic identities' trick \cite[II.13]{Serre1988} we may reduce to the case $L\cong k_L\dblecurly{t}$, $L\cong k_L\dblecurly{T}$ with $t=T^e$. The same argument as in the equal characteristic case [loc. cit.] is then easily modified.
\end{proof}

\subsubsection{Extending the residue map to non-standard fields.}
Now suppose that $L$ is a two-dimensional local field of mixed characteristic which is not necessarily standard, and as usual fix a local field $K\le L$. Choose a standard subfield $M$ of $L$ with the same constant subfield as $L$ and of which $L$ is a finite extension; this is possible by lemma \ref{lemma_existence_of_standard_subfield}. Attempt to define the {\em relative residue map for $L/K$} to be composition \[\RES_L:\Omega_{L/K}^\sub{cts}\stackrel{\Tr_{L/M}}{\To}\Omega_{M/K}^\sub{cts}\stackrel{\RES_M}{\To}k_M=k_L.\]

\begin{lemma}
$\RES_L$ is independent of the choice of $M$.
\end{lemma}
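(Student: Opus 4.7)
The plan is to relate both composites $\RES_{M_i}\circ\Tr_{L/M_i}$ ($i=1,2$) to a common intrinsic residue by embedding $L$ as a finite extension of a standard two-dimensional local field $\tilde L$; the dual approach of seeking a common standard subfield $N\subseteq M_1\cap M_2$ fails in general, since $M_1\cap M_2$ may be as small as $k_L$ itself.

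First I would construct such a $\tilde L$. Starting from any standard subfield $M\le L$ furnished by lemma \ref{lemma_existence_of_standard_subfield}, $L/M$ is a finite extension of complete discrete valuation fields in characteristic zero. The idea is to enlarge the constants: set $\tilde L=Lk'$ for a suitable finite extension $k'/k_L$, chosen so that $\tilde L$ acquires a uniformiser inside its constant subfield. In the tame case $p\nmid e(L/k_L)$, taking $k'=k_L(\pi_{k_L}^{1/e})$ with $e=e(L/k_L)$ suffices: the extension $\tilde L/L$ is then unramified, $\pi_{k_L}^{1/e}$ is a uniformiser of $\tilde L$ lying in $k'\subseteq k_{\tilde L}$, and hence $e(\tilde L/k_{\tilde L})=1$. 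In general a more elaborate Kummer-type construction is needed, in the spirit of lemma \ref{lemma_existence_of_standard_subfield}.

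With $\tilde L$ in hand, lemma \ref{lemma_constant_extension_of_standard_field} shows that each composite $M_ik_{\tilde L}$ is a standard subfield of $\tilde L$ with constant subfield $k_{\tilde L}$. Applying the previous proposition first to the extension of standard fields $\tilde L/M_ik_{\tilde L}$ (equal constant subfield $k_{\tilde L}$) and then to $M_ik_{\tilde L}/M_i$, and combining via transitivity of trace $\Tr_{\tilde L/M_i}=\Tr_{L/M_i}\circ\Tr_{\tilde L/L}$, one obtains
\[\RES_{M_i}\circ\Tr_{L/M_i}\circ\Tr_{\tilde L/L}=\Tr_{k_{\tilde L}/k_L}\circ\RES_{\tilde L},\]
a right-hand side independent of $i$. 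Since $\tilde L/L$ is finite separable (characteristic zero), $\Tr_{\tilde L/L}:\tilde L\to L$ is surjective, and via the identification $\Omega_{\tilde L/K}^\sub{cts}\cong\Omega_{L/K}^\sub{cts}\otimes_L\tilde L$ this surjectivity transfers to the induced trace on continuous differentials. Cancelling the surjective $\Tr_{\tilde L/L}$ from both sides of the displayed equality for $i=1$ and $i=2$ yields $\RES_{M_1}\circ\Tr_{L/M_1}=\RES_{M_2}\circ\Tr_{L/M_2}$, as required.

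The main obstacle is the first step---the construction of the standard extension $\tilde L/L$---particularly in the wildly ramified case $p\mid e(L/k_L)$, where simple Kummer extensions of the constants need not produce a standard field and a more delicate structural argument, parallel to the proof of lemma \ref{lemma_existence_of_standard_subfield}, is required.
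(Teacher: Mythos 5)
Your strategy mirrors the paper's: pass to a finite extension $\tilde L$ (the paper calls it $L'$) of $L$ that is standard, then apply functoriality for standard fields to express $\RES_M\circ\Tr_{L/M}$ in a form that manifestly does not depend on $M$. One small difference in the final step: the paper avoids the surjectivity-and-cancellation argument by simply noting that, for $\omega$ already in $\Omega_{L/K}^\sub{cts}\subseteq\Omega_{L'/K}^\sub{cts}$, one has $\Tr_{L'/L}\omega=|L':L|\omega$, so that $\RES_M(\Tr_{L/M}\omega)=|L':L|^{-1}\Tr_{k_{L'}/k_L}(\RES_{L'}\omega)$ with no cancellation needed; your version (cancelling the surjective $\Tr_{\tilde L/L}$ on the right) is also correct in characteristic zero, just slightly less direct.

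The genuine gap is exactly the one you flag: the existence of a finite extension $\tilde L/L$ with $\tilde L$ standard. Your Kummer construction only treats the tame case $p\nmid e(L/k_L)$, and you leave the wildly ramified case as ``a more delicate structural argument.'' This is not a minor detail to be deferred: for a general mixed-characteristic two-dimensional local field $L$, wild ramification over $k_L$ is the typical situation, and a naive Kummer extension of the constant subfield will not suffice. The paper does not prove this existence statement at all; it cites a structure theorem of Zhukov (\cite[Theorem 2.1]{Zhukov1995}) as a black box, precisely because that result is deeper than what lemma \ref{lemma_existence_of_standard_subfield} or a Kummer argument can deliver. Without either that citation or an actual construction covering wild ramification, your proof is incomplete even though everything downstream of the existence of $\tilde L$ is sound.
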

\begin{proof}
Suppose that $M'$ is another field with the same properties as $M$, and let $\omega\in\Omega_{L/K}^\sub{cts}$. By an important structure result for two-dimensional local fields of mixed characteristic \cite[Theorem 2.1]{Zhukov1995} there is a finite extension $L'$ of $L$ such that $L'$ is standard. Using functoriality for standard fields, we have \[\RES_M(\Tr_{L/M}\omega)=|L':L|^{-1}\RES_M(\Tr_{L'/M}\omega)=|L':L|^{-1}\Tr_{k_{L'}/k_L}(\RES_{L'}(\omega))\] (here we have identified $\omega$ with its image in $\Omega_{L'/K}^\sub{cts}$). Since this expression is equally valid for $M'$ in place of $M$, we have proved the desired independence.
\end{proof}

The definition of the residue in the general case is chosen to ensure that functoriality still holds:

\begin{proposition}\label{proposition_functoriality_for_mixed_local_fields}
Let $L'/L$ be a finite extension of two-dimensional local fields of mixed characteristic; then the following diagram commutes
\[\begin{CD}
\Omega_{L'/K}^\sub{cts} @>\RES_{L'}>> k_{L'}\\
@V\Tr_{L'/L}VV  @VV\Tr_{k_{L'}/k_L}V\\
\Omega_{L/K}^\sub{cts} @>\RES_L>> k_L\\
\end{CD}\]
\end{proposition}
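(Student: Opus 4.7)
The plan is to reduce the claim to the already-established functoriality for standard fields (Proposition in the previous subsubsection) by choosing compatible standard subfields of $L$ and $L'$.

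First, using lemma \ref{lemma_existence_of_standard_subfield}, I choose a standard subfield $M\le L$ with $k_M=k_L$, $\res{M}=\res{L}$, and $L/M$ finite. Next, since $L\subseteq L'$, the constant subfield satisfies $k_L\subseteq k_{L'}$, and both are finite extensions of $\bb{Q}_p$, so $k_{L'}/k_L$ is a finite separable extension. Set $M':=M\cdot k_{L'}\subseteq L'$. By corollary \ref{corollary_structure_of_standard_fields} (second statement), $M'$ is standard with $k_{M'}=k_{L'}$. Since $L'/M'$ is dominated by $L'/M=L'/L\cdot L/M$ and both factors on the right are finite, $L'/M'$ is finite as well. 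Thus $M'$ serves as a standard subfield of $L'$ in the sense required for the definition of $\RES_{L'}$.

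By the definition of the residue in the general case, together with transitivity of the trace on continuous differentials,
\[
\RES_L\circ\Tr_{L'/L}
=\RES_M\circ\Tr_{L/M}\circ\Tr_{L'/L}
=\RES_M\circ\Tr_{L'/M}
=\RES_M\circ\Tr_{M'/M}\circ\Tr_{L'/M'},
\]
while $\RES_{L'}=\RES_{M'}\circ\Tr_{L'/M'}$ by definition. It therefore suffices to check
\[
\RES_M\circ\Tr_{M'/M}=\Tr_{k_{M'}/k_M}\circ\RES_{M'},
\]
which is precisely the functoriality of the residue for the finite extension $M'/M$ of \emph{standard} two-dimensional local fields of mixed characteristic, established in the preceding proposition. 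Composing with $\Tr_{L'/M'}$ on the right gives the desired commutativity.

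The only subtlety (and thus the main thing to verify carefully) is that $M'=M\cdot k_{L'}$ is indeed standard with constant subfield $k_{L'}$ and that $L'/M'$ is finite; once these are in hand, the argument is a clean tower-of-traces computation, and no genuinely new input is required beyond the standard-field case and lemma \ref{lemma_constant_extension_of_standard_field}.
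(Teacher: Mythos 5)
Your proposal is correct and follows essentially the same route as the paper: choose a standard subfield $M$ of $L$ defining $\RES_L$, take $M' = Mk_{L'}$ as a standard subfield of $L'$ defining $\RES_{L'}$, and reduce to functoriality for the standard extension $M'/M$ via the tower of traces. The only difference is that you spell out the verification that $M'$ is an admissible choice (via corollary \ref{corollary_structure_of_standard_fields} and finiteness of $L'/M$), which the paper merely asserts.
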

\begin{proof}
Let $M$ be a standard subfield of $L$ used to define $\RES_L$; then $M'=Mk_{L'}$ may be used to define $\RES_{L'}$. For $\omega\in\Omega_{L'/K}^\sub{cts}$, we have \[\RES_L(\Tr_{L'/L}\omega)=\RES_M(\Tr_{L/M}\Tr_{L'/L}\omega)=\RES_M(\Tr_{M'/M}\Tr_{L'/M'}\omega).\] Apply functoriality for standard fields to see that this equals \[\Tr_{k_{L'}/k_L}(\RES_{M'}(\Tr_{L'/M'}\omega))=\Tr_{k_{L'}/k_L}\RES_{L'}(\omega),\] as required.
\end{proof}

\subsubsection{Relation of the residue map to that of the residue fields.}
We finish this study of residues by proving that the residue map on a mixed characteristic, two-dimensional local field $L$ lifts the residue map of the residue field $\res{L}$. More precisely, we claim that the following diagram commutes
\[\begin{CD}
\Omega_{\roi_L/\roi_K}^\sub{sep}& @>\RES_L>> &\roi_{k_L}\\
@VVV&&@VVV\\
\Omega_{\res{L}/\res{K}}&@>e(L/k_L)\RES_{\res{L}}>>&\res{k}_L\\
\end{CD}\]
where some of the arrows deserve further explanation. The lower horizontal arrow is $e(L/k_L)$ times the usual residue map for $\res{L}$ (a local field of finite characteristic); note that $\res{K}$ is a finite subfield of $\res{L}$, and that $\res{k}_L$ is the constant subfield of $\res{L}$, which we identify with the residue field of $\res{L}$. Also, the top horizontal arrow is really the composition $\Omega_{\roi_L/\roi_K}^\sub{sep}\stackrel{j}\to\Omega_{L/K}^\sub{cts}\stackrel{\RES_L}{\to}k_L$; part of our claim is that $\RES_L\circ j$ has image in $\roi_{k_L}$.

Combining the identifications $\Omega_{\res{L}/\res{K}}=\Omega_{\res{L}/\res{k}_L}$ and $\Omega_{L/K}^\sub{cts}=\Omega_{L/k_L}^\sub{cts}$ with the natural surjection $\Omega_{\roi_L/\roi_K}^\sub{sep}\onto\Omega_{\roi_L/\roi_{k_L}}^\sub{sep}$, the problem is easily reduced to the case $K=k_L$, which we now consider.

Let us first suppose that $L$ is a standard field (so that $e(L/K)=1$); write $L=M$ for later clarity, and let $t\in M$ be a first local parameter. Then $\Omega_{\roi_M/\roi_K}^\sub{sep}=\roi_M\,dt$ by lemma \ref{lemma_structure_of_differential_forms_standard_fields} and so the image of $\Omega_{\roi_M/\roi_K}^\sub{sep}$ inside $\Omega_{M/K}^\sub{cts}=M\,dt$ is $\roi_M\,dt$. We need to show that $\RES_{\res{M}}(\res{f}\,d\res{t})=\res{\RES_M(f\,dt)}$ for all $f\in\roi_M$; this is clear from the explicit definition of the residue map for $M=K\dblecurly{t}$.

Now suppose $L$ is arbitrary, choose a first local parameter $t\in\roi_L$, and then choose a standard subfield $M$ such that $\res{M}=\res{L}$, $k_M=K$, and $t\in M$ (see lemma \ref{lemma_existence_of_standard_subfield}). To continue the proof, we must better understand the structure of $\Omega_{\roi_L/\roi_K}^\sub{sep}$. Let $\pi_L$ denote a uniformiser of $L$, so that $\roi_L=\roi_M[\pi_L]$; let $f(X)\in\roi_M[X]$ be the minimal polynomial of $\pi_L$, and write $f(X)=\sum_{i=0}^nb_iX^i$. We have our usual exact sequence \[0\to\Omega_{\roi_M/\roi_K}^\sub{sep}\otimes_{\roi_M}\roi_L\to\Omega_{\roi_L/\roi_K}^\sub{sep}\to\Omega_{\roi_L/\roi_M}\to0,\] so that $\Omega_{\roi_L/\roi_K}^\sub{sep}$ is generated by $dt$ and $d\pi_L$. Moreover, \begin{equation}0=d(f(\pi_L))=f'(\pi_L)\,d\pi_L+c\,dt,\label{3}\end{equation} where $c=\sum_{i=0}^n\frac{db_i}{dt}\pi_L^i$. Furthermore, using our exact sequence to see that $dt$ is not torsion, and from the fact that $\Omega_{\roi_L/\roi_M}\cong\roi_L/\langle f'(\pi_L)\rangle$ (using the generator $d\pi_L$), it is easy to check that (\ref{3}) is the only relation between the generators $dt$ and $d\pi_L$.

We now define a trace map $\Tr_{\roi_L/\roi_M}:\Omega_{\roi_L/\roi_K}^\sub{sep}\to\Omega_{\roi_M/\roi_K}^\sub{sep}$ as follows: \begin{align*}\Tr_{\roi_L/\roi_M}(a\,d\pi_L)&=\Tr_{L/M}(-acf'(\pi_L)^{-1})\,dt\\\Tr_{\roi_L/\roi_M}(b\,dt)&=\Tr_{L/M}(b)\,dt\end{align*} for $a,b\in\roi_L$. It is important to recall the classical different formula (\cite[III.2]{Neukirch1999}; also see appendix \ref{section_finite_morphisms_etc}): \[f'(\pi)^{-1}\roi_L=\frak{C}(\roi_L/\roi_M)\;\;(=\!\{x\in L:\Tr_{L/M}(x\roi_L)\subseteq\roi_F\}),\] to see that this is well-defined. Furthermore, if we base change $-\otimes_{\roi_L}L$, then we obtain the usual trace map $\Tr_{L/M}:\Omega_{L/K}^\sub{cts}\to\Omega_{M/K}^\sub{cts}$.

By definition of the residue map on $L$, it is now enough to show that the diagram
\[\begin{CD}
\Omega_{\roi_L/\roi_K}^\sub{sep}&@>\Tr_{\roi_L/\roi_M}>>&\Omega_{\roi_M/\roi_K}^\sub{sep}\\
@VVV&&@VVV\\
\Omega_{\res{L}/\res{K}}&@>\times|L:M|>>&\Omega_{\res{M}/\res{K}}
\end{CD}\]
commutes. Well, for an element of the form $b\,dt\in\Omega_{\roi_L/\roi_K}^\sub{sep}$ with $b\in\roi_L$, commutativity is clear. Now consider an element $a\,d\pi_L\in\Omega_{\roi_L/\roi_K}^\sub{sep}$; the image of this in $\Omega_{\res{L}/\res{K}}$ is zero, so we must show that $\res{\Tr_{\roi_L/\roi_M}(a\,d\pi_L)}=0$ in $\Omega_{\res{M}/\res{K}}$. For this we recall another formula relating the trace map and different, namely \[\Tr_{L/M}(\pi_L^if'(\pi_L)^{-1}\roi_L)=\pi_M^{\lfloor\frac{i}{e}\rfloor}\roi_M,\] where $i\in\bb{Z}$, $e=|L:M|$, and $\lfloor\phantom{a}\rfloor$ denotes the greatest integer below (see e.g. \cite[Proposition III.1.4]{Fesenko2002}). Since $f$ is an Eisenstein polynomial, $\nu_L(\frac{da_i}{dt})\ge e$ for all $i$, and so $\nu_L(c)\ge e$; by the aforementioned formula, $\Tr_{L/M}(cf'(\pi_L)^{-1}\roi_L)\subseteq\pi_M\roi_M$. This is what we needed to show, and completes the proof of compatibility between $\RES_L$ and $\RES_{\res{L}}$.

\begin{corollary}
Let $L$ be a two-dimensional local field of mixed characteristic, and $K\le L$ a local field. Then the following diagram commutes:
\[\begin{CD}
\Omega_{\roi_L/\roi_K}^\sub{sep}& @>\Tr_{k_L/K}\circ\RES_L>> &\roi_{K}\\
@VVV&&@VVV\\
\Omega_{\res{L}/\res{K}}&@>e(L/K)\Tr_{\res{k}_L/\res{K}}\circ\RES_{\res{L}}>>&\res{K}\\
\end{CD}\]
\end{corollary}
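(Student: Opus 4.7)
The idea is to factor the vertical reduction through the constant subfield, so that the corollary reduces to combining the diagram immediately preceding it with a classical fact about how the trace on integers of a finite extension of local fields behaves under reduction modulo $\frak{p}$. Specifically, for $\omega\in\Omega_{\roi_L/\roi_K}^\sub{sep}$ the preceding diagram gives $\RES_L(\omega)\in\roi_{k_L}$ and
\[\overline{\RES_L(\omega)}=e(L/k_L)\,\RES_{\res{L}}(\bar\omega)\quad\mbox{in }\res{k}_L.\]
Since $\Tr_{k_L/K}$ carries $\roi_{k_L}$ into $\roi_K$, the target of the top horizontal arrow is indeed $\roi_K$. What remains to be proved is that for every $y\in\roi_{k_L}$,
\[\overline{\Tr_{k_L/K}(y)}=e(k_L/K)\,\Tr_{\res{k}_L/\res{K}}(\bar y);\]
once this is known, the multiplicativity $e(L/K)=e(L/k_L)\,e(k_L/K)$ combined with the display above immediately yields commutativity of the required diagram.

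\textbf{Key lemma.} To prove the displayed formula for the one-dimensional extension $k_L/K$, I would factor it through its maximal unramified subextension $K_0$. For $K_0/K$ (unramified), $e(K_0/K)=1$ and compatibility of the trace with reduction is immediate, since a $K$-basis of $\roi_{K_0}$ reduces to a $\res{K}$-basis of $\res{K}_0$ and the matrix of multiplication by $y$ reduces to that of multiplication by $\bar y$. For $k_L/K_0$ (totally ramified of degree $e$), pick a uniformiser $\pi$ of $k_L$ with Eisenstein minimal polynomial $f(X)=X^e+a_{e-1}X^{e-1}+\dots+a_0$ over $K_0$, where each $a_j\in\frak{p}_{K_0}$. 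Writing $y=\sum_{i=0}^{e-1}y_i\pi^i$ with $y_i\in\roi_{K_0}$, Newton's identities express the power sums $\Tr_{k_L/K_0}(\pi^i)$ ($1\le i\le e-1$) as polynomials in the $a_j$ with no constant term, so these traces all lie in $\frak{p}_{K_0}$; hence $\Tr_{k_L/K_0}(y)\equiv e\,y_0\pmod{\frak{p}_{K_0}}$. Because $\res{k}_L=\res{K}_0$ and $\bar y=\bar y_0$, this is precisely $e(k_L/K_0)\,\Tr_{\res{k}_L/\res{K}_0}(\bar y)$. Composing the two cases through $K_0$ gives the lemma.

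\textbf{Conclusion.} Putting the pieces together,
\[\overline{\Tr_{k_L/K}(\RES_L(\omega))}=e(k_L/K)\,\Tr_{\res{k}_L/\res{K}}\bigl(\overline{\RES_L(\omega)}\bigr)=e(k_L/K)\,e(L/k_L)\,\Tr_{\res{k}_L/\res{K}}\bigl(\RES_{\res{L}}(\bar\omega)\bigr),\]
which equals $e(L/K)\,\Tr_{\res{k}_L/\res{K}}(\RES_{\res{L}}(\bar\omega))$, as required.

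\textbf{Main obstacle.} The only non-formal ingredient is the key lemma on the reduction of $\Tr_{k_L/K}$ on integers. The computation is entirely classical, but deserves care in the wildly ramified case, where $e\equiv 0\pmod p$ forces both sides of the formula to vanish simultaneously; everything else is bookkeeping on top of the commutative square already established in the body of the subsection.
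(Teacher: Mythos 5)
Your proof is correct and follows exactly the same strategy as the paper's own proof, which simply stacks the square established in the preceding paragraph (reducing $\RES_L$ to $e(L/k_L)\RES_{\res{L}}$) with the classical commutative square
\[\begin{CD}
\roi_{k_L}&@>\Tr_{k_L/K}>>&\roi_K\\
@VVV&&@VVV\\
\res{k}_L&@>e(k_L/K)\Tr_{\res{k}_L/\res{K}}>>&\res{K}
\end{CD}\]
and then uses multiplicativity of ramification indices. The only difference is that the paper takes commutativity of the one-dimensional trace square as known, while you supply the proof (unramified plus Eisenstein/Newton's-identities for the totally ramified step); that argument is correct and is a reasonable elaboration of a standard fact, but it is not a distinct route.
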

\begin{proof}
It is enough to combine what we have just proved with the commutativity of
\[\begin{CD}
\roi_{k_L}&@>\Tr_{k_L/K}>>&\roi_K\\
@VVV&&@VVV\\
\res{k}_L&@>e(k_L/K)\Tr_{\res{k}_L/\res{K}}>>&\res{K}
\end{CD}\]
\end{proof}

\section{Reciprocity for two-dimensional, normal, local rings}\label{section_reciprocity_for_rings}
Now we consider a semi-local situation and prove the promised reciprocity law (theorem \ref{theorem_incomplete_reciprocity}).

Let $A$ be a two-dimensional, normal, complete, local ring of characteristic zero, with finite residue field of characteristic $p$; for the remainder of this section, we will refer to these collective conditions as (\dag). Denote by $F$ the field of fractions of $A$ and by $\frak{m}_A$ the maximal ideal. For each height one prime $y\lhd A$ (we will sometimes write $y\lhd\!^1A$), the localisation $A_y$ is a discrete valuation ring, and we denote by $F_y=\Frac\hat{A_y}$ the corresponding complete discrete valuation field. The residue field of $F_{y}$ is $\res{F}_{y}=\Frac A/y$. Moreover, $A/y$ is a one-dimensional, complete, local domain, and so its field of fractions is a complete discrete valuation field whose residue field is a finite extension of the residue field of $A/y$, which is the same as the residue field of $A$. Therefore $F_y$ is a two-dimensional local field of characteristic zero.

Since $A$ is already complete, there is no confusion caused by writing $\hat{A}_y$ instead of $\hat{A_y}$ (notice the different sized hats).

\begin{lemma}\label{lemma_unique_embedding_of_Zp_into_local_ring}
There is a unique ring homomorphism $\bb{Z}_p\to A$, and it is a closed embedding.
\end{lemma}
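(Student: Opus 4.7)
The plan is to break the claim into three steps: existence, uniqueness, and the topological statement. First, since $A$ has characteristic zero, there is a unique ring homomorphism $\bb{Z}\to A$, and because $A/\frak{m}_A$ has characteristic $p$, the element $p$ lands in $\frak{m}_A$. Hence $p^n\in\frak{m}_A^n$ for every $n\ge 1$, so the composition $\bb{Z}\to A\to A/\frak{m}_A^n$ factors through $\bb{Z}/p^n\bb{Z}$. Passing to the inverse limit over $n$ and invoking the $\frak{m}_A$-adic completeness of $A$ yields a ring homomorphism $\bb{Z}_p\to A$ extending the canonical map $\bb{Z}\to A$.

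For uniqueness, let $\phi:\bb{Z}_p\to A$ be any ring homomorphism. Since $\bb{Z}$ is initial in rings, $\phi$ restricted to $\bb{Z}$ coincides with the canonical map, and in particular $\phi(p)\in\frak{m}_A$; thus $\phi(p^n\bb{Z}_p)\subseteq\frak{m}_A^n$ for all $n$, which shows $\phi$ is continuous when $\bb{Z}_p$ is equipped with its $p$-adic topology and $A$ with its $\frak{m}_A$-adic topology. By Krull's intersection theorem applied to the Noetherian local ring $A$, we have $\bigcap_n\frak{m}_A^n=0$, so $A$ is Hausdorff; together with the density of $\bb{Z}$ in $\bb{Z}_p$, this forces $\phi$ to agree with the map constructed in the previous step.

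For the closed embedding claim, I would first note that $A$ is a domain (being a Noetherian normal local ring) of characteristic zero, so $\phi(p)\ne 0$. The kernel of $\phi$ is then a prime of $\bb{Z}_p$ not containing $(p)$, hence equals $(0)$, and $\phi$ is injective. Topologically, $\bb{Z}_p$ is compact in its $p$-adic topology, and the continuous image $\phi(\bb{Z}_p)$ in the Hausdorff space $A$ is therefore compact and consequently closed. The continuous bijection $\phi:\bb{Z}_p\to\phi(\bb{Z}_p)$ from a compact space to a Hausdorff one is automatically a homeomorphism, which makes $\phi$ a closed embedding.

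The lemma is essentially formal; there is no real obstacle. The only ingredients are the $\frak{m}_A$-adic completeness of $A$ (providing existence via the universal property of completion), the inclusion $p\in\frak{m}_A$ (forcing automatic continuity of any ring map $\bb{Z}_p\to A$), and Hausdorffness of $A$ together with compactness of $\bb{Z}_p$ (giving both uniqueness and closedness of the image).
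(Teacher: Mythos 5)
Your proof is correct and follows essentially the same strategy as the paper: existence via $p\in\frak{m}_A$ and $\frak{m}_A$-adic completeness, automatic continuity of any homomorphism $\bb{Z}_p\to A$, and the closed-embedding conclusion from compactness of $\bb{Z}_p$ together with Hausdorffness of $A$ (Krull intersection). The one small divergence is in how continuity of an arbitrary $\phi$ is obtained: the paper observes that $\phi^{-1}(\frak{m}_A^n)$ is an ideal of $\bb{Z}_p$ containing $p^n\bb{Z}$ and uses that every ideal of $\bb{Z}_p$ is closed, whereas you deduce $\phi(p^n\bb{Z}_p)\subseteq\frak{m}_A^n$ directly from multiplicativity — a slightly more elementary route to the same end. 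You also spell out injectivity (kernel is a prime of $\bb{Z}_p$ not containing $p$, hence zero), which the paper leaves implicit; this is a welcome clarification rather than a difference of approach.
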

\begin{proof}
The natural embedding $j:\bb{Z}\into A$ is continuous with respect to the $p$-adic topology on $\bb{Z}$ and the $\frak{m}_A$-adic topology on $A$ since $p^n\bb{Z}_p\subseteq j^{-1}(\frak{m}_A^n)$ for all $n\ge 0$. Therefore $j$ extends to a continuous injection $j:\bb{Z}_p\into A$, which is a closed embedding since $\bb{Z}_p$ is compact and $A$ is Hausdorff.

Now suppose that $\phi:\bb{Z}_p\to A$ is an arbitrary ring homomorphism. Then $\phi^{-1}(\frak{m}_A^n)$ is an ideal of $\bb{Z}_p$ which contains $p^n\bb{Z}$; but every ideal of $\bb{Z}_p$ is closed, and so it contains $p^n\bb{Z}_p$. Therefore $\phi$ is continuous; since $\phi$ agrees with $j$ on $\bb{Z}$, they are equal.
\end{proof}

We fix a finite extension $\roi_K$ of $\bb{Z}_p$ inside $A$, where $\roi_K$ is the ring of integers of a finite extension $K$ of $\bb{Q}_p$. For each height one prime $y\lhd A$, we have $K\le F_y$, and the constant/coefficient field $k_y=k_{F_y}$ of $F_y$ is a finite extension of $K$. There is a natural map $\Omega_{F/K}\to\Omega_{F_y/K}^\sub{cts}$, so we may define the {\em residue map at $y$} by \[\RES_y:\Omega_{F/K}\To \Omega_{F_y/K}^\sub{cts}\stackrel{\RES_{F_y}}{\To}k_y.\] It is a nuisance having the residue maps associated to different primes taking values in different finite extensions of $K$, so we also introduce \[\RRES_y=\Tr_{k_y/K}\RES_y:\Omega_{F/K}\to K.\]

Our immediate aim, to be deduced in several stages, is the following reciprocity law:

\begin{theorem}
Let $\omega\in\Omega_{F/K}$; then for all but finitely many height one primes $y\lhd A$ the residue $\RES_y(\omega)$ is zero, and \[\sum_{y\lhd^1 A}\RRES_y(\omega)=0\] in $K$.
\end{theorem}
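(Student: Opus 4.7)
For finiteness, write $\omega=\sum_{i=1}^n f_i\,dg_i$ with $f_i,g_i\in F$ and let $S\subseteq\{y\lhd\!^1A\}$ be the union of the (finitely many) vertical primes containing $p$ with the (finitely many) primes at which some $f_i$ or $g_i$ fails to be integral. For $y\notin S$, every $f_i,g_i$ lies in $A_y$, so $\omega$ is in the image of $\Omega_{\roi_{F_y}/\roi_K}^\sub{sep}$ in $\Omega_{F_y/K}^\sub{cts}$; moreover $p\notin y$ forces $F_y$ to be equal-characteristic, so by lemma \ref{lemma_differential_forms_of_K[[t]]} together with the explicit formula $\RES_{F_y}(f\,dt)=\mathrm{coeft}_{t^{-1}}(f)$, integral forms have vanishing residue. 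Hence $\RES_y(\omega)=0$ for all $y\notin S$, proving the first half of the theorem.

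For the main identity, the plan is to reduce to the model case $B=\roi_K[[t]]$ by functoriality, and then to verify reciprocity directly for $B$. Since $A/\pi_KA$ is one-dimensional (Krull's principal ideal theorem), prime avoidance produces $t\in A$ such that $(\pi_K,t)$ is primary to $\frak{m}_A$; then $B:=\roi_K[[t]]\hookrightarrow A$ and $A$ is a finite $B$-module by completeness. Setting $M=\Frac B$, the extension $F/M$ is finite and separable. For each $y'\lhd\!^1B$, excellence of $A$ supplies a product decomposition $F\otimes_M F_{y'}\cong\prod_{y\mid y'}F_y$ under which $\Tr_{F/M}$ decomposes as $\sum_{y\mid y'}\Tr_{F_y/F_{y'}}$, and functoriality of the residue (propositions \ref{proposition_functoriality_in_equal_characteristic_case} and \ref{proposition_functoriality_for_mixed_local_fields}, according to whether $y'$ is horizontal or vertical) yields
\[\sum_{y\mid y'}\RRES_y(\omega)=\RRES_{y'}(\Tr_{F/M}\omega).\]
Summing over $y'$ reduces the claim to reciprocity for $B$ applied to $\Tr_{F/M}\omega\in\Omega_{M/K}$.

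For the base case $B=\roi_K[[t]]$, $dt$ is an $F_y$-basis of $\Omega_{F_y/K}^\sub{cts}$ at every height-one prime $y\lhd B$: at the vertical $y_0=(\pi_K)$ because $t$ is a first local parameter of $F_{y_0}=K\dblecurly{t}$, and at each horizontal $y=(P)$ because $dt=P'(t)^{-1}\,dP$ with $P'(t)\in\roi_{F_y}^\times$ by separability of $P$. So it suffices to show $\sum_y\RRES_y(f\,dt)=0$ for every $f\in M$. This is verified directly on a spanning family: for $f\in\roi_K[[t]]$ every local contribution vanishes (at horizontal $y$ by integrality, and at $y_0$ because the $K\dblecurly{t}$-expansion of $f$ has no negative powers of $t$); for a ``principal part'' $f=r(t)/P(t)^k$ with $r\in\roi_K[t]$ of degree less than $k\deg P$ and $P$ irreducible distinguished, the horizontal residue $\RRES_{(P)}$ is cancelled by the vertical residue $\RRES_{(\pi_K)}$ coming from the Laurent expansion of $r/P^k$ in $K\dblecurly{t}$ (as a sample: $f=1/(t-a)$ with $a\in\pi_K\roi_K$ gives $\RRES_{(t-a)}=1$ and $\RRES_{(\pi_K)}=-1$, thanks to the sign twist in the mixed-characteristic residue).

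The principal obstacle is extending this identity from the rational class to all of $M=\Frac\roi_K[[t]]$, which properly contains elements transcendental over $K(t)$, so na\"ive partial fractions do not apply. The natural remedy is Weierstrass preparation: given $f=a/b\in M$, factor the denominator as $b=u\pi_K^n\prod P_i^{e_i}$ with $u\in\roi_K[[t]]^\times$ and each $P_i$ irreducible distinguished, then iterate Weierstrass division of $a/u$ to exhibit $f$ as an integral power-series piece (contributing zero by the first spanning case) plus a $K(t)$-rational combination of principal parts at the $P_i$ (contributing zero by the second). Linearity then concludes the argument.
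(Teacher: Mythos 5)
Your overall architecture matches the paper's: finiteness, reduction to a model case $B \cong \roi_K[[t]]$ by a finite extension, functoriality of residues over $\Tr_{F/M}$ (the content of proposition \ref{proposition_functoriality_in_complete_case}), and a direct verification of reciprocity for $B$. The finiteness argument and the functoriality reduction are the paper's. For the reduction to $B$ you use prime avoidance plus completeness where the paper invokes Cohen's structure theorem; both routes work, although injectivity of $\roi_K[[T]]\to A$, $T\mapsto t$, deserves a word --- if $t$ were algebraic over $K$ the image of $\roi_K[[T]]$ would be one-dimensional, and then the fibre-dimension inequality would force $\dim A/(\pi_K,t)A\ge 1$, contradicting $(\pi_K,t)$ being $\frak m_A$-primary. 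Your Weierstrass-plus-partial-fractions decomposition is a genuine and somewhat cleaner alternative to the paper's lemma \ref{lemma_additive_generators_of_field}: it cleanly isolates a power-series part (all residues vanish by integrality) from finitely many rational pieces, and in particular it dispenses with the continuity argument that closes the paper's proof of theorem \ref{theorem_reciprocity_base_case}.

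The gap is in the cancellation step for the rational pieces. Your sample $f=1/(t-a)$ covers only $\deg P = 1$, where $k_y=K$ and both $\RRES_{(P)}$ and $\RRES_{(\pi_K)}$ are elementary coefficient extractions in the same field. For $P$ irreducible distinguished of degree $d>1$, $\RRES_{(P)}$ is $\Tr_{k_y/K}$ applied to a residue computed in $k_y((t_y))$ with $k_y=K[t]/(P)$ a degree-$d$ extension, while $\RRES_{(\pi_K)}$ is (minus) the $t^{-1}$-coefficient of the $K\dblecurly{t}$-expansion. That the first is the negative of the second is exactly what the paper's proof of theorem \ref{theorem_reciprocity_base_case} establishes, and it is not visible from the degree-one sample: one first shows that the $K\dblecurly t$- and $K((t^{-1}))$-expansions of $h^{-r}$ coincide term by term (the Weierstrass condition makes the $\infty$-expansion coefficients tend to $0$ $\pi_K$-adically), and then invokes classical reciprocity for $\bb P^1_K$ to identify the surviving horizontal residue with minus the residue at $\infty$. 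You must either import this $\bb P^1_K$-reciprocity argument or redo it by splitting $P$ over $\bar K$ and summing over conjugate roots; without one of these, the assertion ``the horizontal residue is cancelled by the vertical residue'' is unproven for $\deg P>1$, and it is the heart of the base case.
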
 

We will also prove an analogous result without the assumption that $A$ is complete; see theorem \ref{theorem_incomplete_reciprocity}.

\subsection{Reciprocity for $\roi_K[[T]]$}
We begin by establishing reciprocity for $B=\roi_K[[T]]$. More precisely, we shall consider $B\cong\roi_K[[T]]$; although this may seem to be a insignificant difference, it is important to understand the intrinsic role of $T$, especially for the proof of proposition \ref{proposition_residue_map_well_defined}.

\begin{lemma}
Let $B$ satisfy conditions (\dag) and also be regular; let $\roi_K\le B$ be the ring of integers of a local field, and assume that $\res{K}=k(\frak{m}_B)$ and $\frak{p}_K\not\subseteq\frak{m}_B^2$. Let $\pi_K$ be any prime of $K$.

Then there exists $t\in\frak{m}_B$ such that $\frak{m}_B=\langle \pi_K,t\rangle$. If $t$ is any such element, then each $f\in B$ may be uniquely written as a convergent series $f=\sum_{i=0}^{\infty}a_i t^i$, with $a_i\in\roi_K$, and this defines an $\roi_K$-isomorphism $B\cong\roi_K[[T]]$, with $t\mapsto T$.
\end{lemma}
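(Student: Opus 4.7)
The plan is first to produce $t$ using regularity, then to identify $B/tB$ with $\roi_K$, and finally to read off the power series expansion by iteration together with $\frak{m}_B$-adic completeness.

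For the first assertion, I would exploit the fact that a two-dimensional regular local ring has cotangent space $\frak{m}_B/\frak{m}_B^2$ of dimension $2$ over the residue field $\res{K}$. The hypothesis $\frak{p}_K\not\subseteq\frak{m}_B^2$ means that $\pi_K$ has non-zero image in $\frak{m}_B/\frak{m}_B^2$, so I can pick any $t\in\frak{m}_B$ whose image completes $\pi_K$ to an $\res{K}$-basis. By Nakayama, $\{\pi_K,t\}$ then generates $\frak{m}_B$, giving the first claim.

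Next, I would identify $\roi_K$ with $B/tB$ via the composition $\roi_K\hookrightarrow B\to B/tB$. Since $B$ is a $2$-dimensional regular (hence UFD) local ring and $t$ is a regular parameter, $tB$ is a height-one prime, so $B/tB$ is a one-dimensional complete local domain. Its maximal ideal is $\frak{m}_B/tB=\langle\pi_K\rangle$, so $B/tB$ is a complete DVR with uniformiser $\pi_K$ and residue field $\res{K}$. Because the natural map $\roi_K\to B/tB$ is a local homomorphism of complete DVRs sending a uniformiser to a uniformiser and inducing an isomorphism on residue fields ($e=f=1$), it is an isomorphism. In particular $\roi_K\cap tB=0$ inside $B$.

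Granted this, I would prove the expansion by induction: given $f\in B$, use $B/tB\cong\roi_K$ to choose the unique $a_0\in\roi_K$ with $f\equiv a_0\pmod{tB}$, write $f-a_0=tf_1$, and continue. After $n$ steps one has $f=\sum_{i=0}^n a_it^i+t^{n+1}f_{n+1}$ with $a_i\in\roi_K$; since $t\in\frak{m}_B$, the remainder lies in $\frak{m}_B^{n+1}$, so the series converges to $f$ by $\frak{m}_B$-adic completeness. Uniqueness follows from $\roi_K\cap tB=0$: if $\sum a_it^i=\sum b_it^i$ and $j$ is minimal with $a_j\neq b_j$, then $(a_j-b_j)t^j\in t^{j+1}B$, so (as $t$ is a non-zero-divisor) $a_j-b_j\in\roi_K\cap tB=0$, a contradiction. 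Finally, because $B$ is $\frak{m}_B$-adically complete and $t\in\frak{m}_B$, the assignment $T\mapsto t$ extends uniquely to a continuous $\roi_K$-algebra homomorphism $\roi_K[[T]]\to B$; existence of the expansion gives surjectivity and uniqueness gives injectivity, yielding the desired isomorphism.

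The main obstacle is the identification $B/tB\cong\roi_K$: everything else is a formal unwinding, but this step is where regularity of $B$, the equal-characteristic hypothesis $\res{K}=k(\frak{m}_B)$, and the condition $\frak{p}_K\not\subseteq\frak{m}_B^2$ (ensuring $\pi_K$ remains a uniformiser of the quotient) must be combined correctly.
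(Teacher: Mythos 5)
Your proof is correct and follows essentially the same route as the paper's: produce $t$ from the two-dimensional cotangent space and the hypothesis $\frak{p}_K\not\subseteq\frak{m}_B^2$, identify $B/tB$ with $\roi_K$ as complete discrete valuation rings, and then build the power-series expansion by iterated division by $t$, with uniqueness resting on $\roi_K\cap tB=0$. The only cosmetic difference is that the paper first observes $tB\cap\roi_K=\{0\}$ to embed $\roi_K\into B/tB$ and then matches prime and residue field, whereas you argue directly that the natural local homomorphism is an isomorphism because $\pi_K$ maps to a uniformiser and the residue field extension is trivial; both are sound and amount to the same thing.
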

\begin{proof}
Since $\pi_K$ is non-zero in the $k(\frak{m}_B)$-vector space $\frak{m}_B/\frak{m}_B^2$, which has dimension two by regularity, there is $t\in B$ such that (the images of) $\pi_K,t$ are a basis for this space; hence $\frak{m}_A=\langle\pi_K,t\rangle$ by Nakayama's lemma.

Now, $B/tB$ is a one-dimensional, complete, regular, local ring, i.e. a complete discrete valuation ring, in which $\pi_K$ is prime. Since $tB$ is prime, $tB\cap\roi_K=\{0\}$ and so $\roi_K\into B/tB$; but these two complete discrete valuation fields have the same prime and same residue field, hence are equal.

Any series of the given form converges in $B$ because $B$ is complete and $a_it^i$ belongs to $\frak{m}_B^i$. Conversely, for any $f\in B$ we may write $f\equiv a_0\mod{tB}$ for some $a_0\in\roi_K$ (since $B/tB=\roi_K$); then replace $f$ by $t^{-1}(f-a_0)$ and repeat the process to obtain the desired expansion for $f$. If a series $\sum_{i\ge I}a_it^i$ is zero, with $a_I\neq 0$, then we get $a_It^I\in t^{I+1}B$, which contradicts the identity $tB\cap\roi_K=\{0\}$.
\end{proof}

Now let $B$, $\roi_K$, $\pi_K$, $t$ satisfy the conditions of the previous lemma; set $M=\Frac{B}$. Using the isomorphism $B\cong\roi_K[[T]]$, we may describe the height one primes $y$ of $B$ (see e.g. \cite[Lemma 5.3.7]{Neukirch2000}):
\begin{enumerate}
\item $p\in y$. Then $y=\pi_KB$, and $M_y$ is a two-dimensional local field of mixed characteristic which is $K$-isomorphic to $K\dblecurly{t}$ and has constant field $k_y=K$.
\item $p\notin y$. Then $y=hB$, where $h\in\roi_K[t]$ is an irreducible, Weierstrass polynomials (i.e. $h=t^d+a_{d-1}t^{d-1}+\dots+a_0$, with $a_i\in\frak{p}_K$), and $M_y$ is a two-dimensional local field of equal characteristic. The coefficient field $k_y$ is the finite extension of $K$ generated by a root of $h$. Finally, $M_y$ is $k_y$-isomorphic to $k_y((t_y))$, where $t_y$ is a uniformiser at $y$, e.g.~$t_y=h$.
\end{enumerate}

We need a convenient set of additive generators of $M$:

\begin{lemma}\label{lemma_additive_generators_of_field}
Each element of $M$ is a finite sum of elements of the form \[\frac{\pi_K^ng}{h^r},\] with $h\in\roi_K[t]$ an irreducible, Weierstass polynomial, $r>0$, $n\in\bb{Z}$, and $g\in B$.
\end{lemma}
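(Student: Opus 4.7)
My plan is to prove this by combining the Weierstrass preparation theorem with a partial-fractions argument that uses Krull dimension to control denominators.

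First I would reduce an arbitrary element $x \in M^\times$ to a convenient form. Write $x = f/g$ with $f, g \in B$, $g \neq 0$, and apply Weierstrass preparation (valid because $B \cong \roi_K[[T]]$): we obtain $g = \pi_K^m u P$, where $m \geq 0$, $u \in B^\times$, and $P \in \roi_K[t]$ is a distinguished (Weierstrass) polynomial. Since $\roi_K[t]$ is a UFD and products of Weierstrass polynomials are Weierstrass, we factor $P = h_1^{s_1} \cdots h_k^{s_k}$ into pairwise distinct irreducible Weierstrass polynomials. Setting $g_0 := u^{-1} f \in B$, this gives
\[
x = \frac{\pi_K^{-m} g_0}{h_1^{s_1} \cdots h_k^{s_k}},
\]
so the problem is reduced to showing that any such fraction decomposes into the required form.

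Next I would carry out a partial-fractions decomposition, for which the key input is that distinct irreducible Weierstrass polynomials become coprime after inverting $\pi_K$. Concretely, for any two distinct irreducibles $h_i$, $h_j$ among the factors, the ideals $h_i B$ and $h_j B$ are distinct height-one primes of the two-dimensional local ring $B$, so the only prime of $B$ containing both is $\frak{m}_B = \langle \pi_K, t\rangle$. Hence $\sqrt{h_i B + h_j B} = \frak{m}_B$, and some power $\pi_K^N$ lies in $h_i B + h_j B$. Raising the relation $\pi_K^N = \alpha h_i + \beta h_j$ to a sufficiently large power and collecting terms shows that for any $s_i, s_j \geq 1$ there exist $\alpha', \beta' \in B$ and $N' \geq 0$ with $\pi_K^{N'} = \alpha' h_i^{s_i} + \beta' h_j^{s_j}$. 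Dividing gives
\[
\frac{1}{h_i^{s_i} h_j^{s_j}} = \pi_K^{-N'}\Bigl(\frac{\alpha'}{h_j^{s_j}} + \frac{\beta'}{h_i^{s_i}}\Bigr).
\]

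An induction on $k$ (the number of distinct irreducible factors in the denominator) using this two-variable identity then rewrites $1/(h_1^{s_1} \cdots h_k^{s_k})$ as a finite sum $\sum_{i=1}^k \pi_K^{-N_i} c_i/h_i^{s_i}$ with $c_i \in B$. Multiplying by $\pi_K^{-m} g_0$ expresses $x$ as a finite sum of the desired form $\pi_K^n g/h^r$. The only mildly delicate step is verifying the coprimality-up-to-$\pi_K$ computation; once the observation that $h_i B + h_j B$ is $\frak{m}_B$-primary is made, everything else is routine manipulation of Weierstrass decompositions.
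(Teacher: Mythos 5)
Your proposal is correct and follows essentially the same route as the paper: both reduce via the UFD structure/Weierstrass preparation of $B\cong\roi_K[[T]]$ to a denominator $\pi_K^m h_1^{s_1}\cdots h_k^{s_k}$, and both use the observation that $\langle h_i^{s_i}, h_j^{s_j}\rangle$ (or equivalently $\langle h_i, h_j\rangle$) is $\frak{m}_B$-primary to produce a relation $\pi_K^{N}\in\langle h_i^{s_i}, h_j^{s_j}\rangle$ that drives the partial-fraction splitting. The paper works directly with the ideal $\langle\pi_1^{r_1},\pi_2^{r_2}\rangle$ rather than raising $\pi_K^N=\alpha h_i+\beta h_j$ to a power, but this is only a cosmetic difference.
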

\begin{proof}
We begin with an element of $M$ of the form $1/(\pi_1^{r_1}\pi_2^{r_2})$, with $\pi_1,\pi_2$ distinct irreducible elements of $B$, and $r_1,r_2\ge 1$. Set $I=\langle\pi_1^{r_1},\pi_2^{r_2}\rangle\lhd B$; then $I$ cannot be contained inside any height one prime $\frak{q}$, for then $\frak{q}$ would contain both $\pi_1$ and $\pi_2$. So $\sqrt{I}=\frak{m}_B$ and therefore $\frak{m}_B^m\subseteq I$ for $m\gg0$. Therefore we may write $\pi_K^m=g_1\pi_1^{r_1}+g_2\pi_2^{r_2}$ for some $g_1,g_2\in B$, and we deduce \[\frac{1}{\pi_1^{r_1}\pi_2^{r_2}}=\frac{\pi_K^mg_1}{\pi_2^{r_2}}+\frac{\pi_K^mg_2}{\pi_1^{r_1}}.\]

Now, a typical element of $M$ has the form $a/b$, with $a,b\in B$. Since $B$ is a unique factorisation domain whose prime ideals are as described above, we may write $b=u\pi_K^rh_1^{r_1}\cdots h_s^{r_s}$ where $u\in\mult{B}$, the $h_i$ are irreducible Weierstass polynomials, and all the exponents are $\ge0$. Replacing $a$ with $u^{-1}a$, we may suppose $u=1$. Applying the first part of the proof repeatedly decomposes $a/b$ into a sum of the required form.
\end{proof}

We also need to understand the space of relative differential forms:

\begin{lemma}\label{lemma_2d_complete_ring_base_case}
$\Omega_{B/\roi_K}^\sub{sep}$ is a free $B$-module of rank one, with basis $dt$. For each height one prime $y\lhd B$, the natural map $\Omega_{B/\roi_K}\otimes_B \hat{B}_y\to\Omega_{\hat{B}_y/\roi_K}$ descends to an isomorphism \[\Omega_{B/\roi_K}^\sub{sep}\otimes_B \hat{B}_y\isoto\Omega_{\hat{B}_y/\roi_K}^\sub{sep}.\] Hence there is an induced isomorphism $\Omega_{B/\roi_K}^\sub{sep}\otimes_BM_y\isoto\Omega_{M_y/K}^\sub{cts}$.
\end{lemma}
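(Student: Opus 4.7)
The plan is to establish the claims in order: first the freeness of $\Omega_{B/\roi_K}^\sub{sep}$ by adapting the argument of Lemma \ref{lemma_differential_forms_of_K[[t]]} to the non-principal maximal ideal $\frak{m}_B=\langle\pi_K,t\rangle$; then the completion isomorphism by identifying both sides as free $\hat{B}_y$-modules of rank one admitting $dt$ as a common generator; and finally the tensor-product statement by applying $-\otimes_{\hat{B}_y}M_y$.

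For freeness, define the $\roi_K$-linear derivation $\tfrac{d}{dt}\colon B\to B$ by $\sum_i a_i t^i\mapsto\sum_i i\,a_i t^{i-1}$, which converges in $B$ since $t^i\in\frak{m}_B^i$. For $f\in B$ and $n\ge 0$, use the decomposition just established to write $f=\sum_{i=0}^n a_it^i+gt^{n+1}$ with $a_i\in\roi_K$ and $g\in B$. Since $da_i=0$ in $\Omega_{B/\roi_K}$, a direct computation yields
\[df-\tfrac{df}{dt}\,dt=t^{n+1}\bigl(dg-\tfrac{dg}{dt}\,dt\bigr)\in\frak{m}_B^{n+1}\Omega_{B/\roi_K}.\]
Letting $n\to\infty$, this element lies in $\bigcap_n\frak{m}_B^n\Omega_{B/\roi_K}$, so $dt$ generates the separated quotient; that $dt$ is not torsion follows because $\tfrac{d}{dt}$ induces a $B$-linear map $\Omega_{B/\roi_K}^\sub{sep}\to B$ sending $dt$ to $1$.

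For the completion isomorphism, both sides are free $\hat{B}_y$-modules of rank one, so it suffices to show that the natural image of $dt$ generates $\Omega_{\hat{B}_y/\roi_K}^\sub{sep}$. Using the classification of height one primes of $B$ recalled above, there are two cases. If $y=\pi_K B$, then $M_y$ is standard with constant field $k_{M_y}=K$, and Lemma \ref{lemma_structure_of_differential_forms_standard_fields} gives $\Omega_{\hat{B}_y/\roi_K}^\sub{sep}=\hat{B}_y\,dt\oplus(\Omega_{\roi_K/\roi_K}\otimes_{\roi_K}\hat{B}_y)=\hat{B}_y\,dt$. If $y=hB$ with $h$ an irreducible Weierstrass polynomial, then $M_y$ is equicharacteristic with $h$ as a uniformizer, and Lemma \ref{lemma_differential_forms_of_K[[t]]} gives $\Omega_{\hat{B}_y/\roi_K}^\sub{sep}=\hat{B}_y\,dh=\hat{B}_y\cdot h'(t)\,dt$.

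The crux of the argument, and the main obstacle, is therefore to verify that $h'(t)\in\hat{B}_y^\times$ in the equicharacteristic case. Since $h$ is monic and irreducible in $\roi_K[t]$, it remains irreducible in $K[t]$ and is thus the minimal polynomial over $K$ of the image $\alpha$ of $t$ in the residue field $k_y$ of $\hat{B}_y$. Because $K$ has characteristic zero, $k_y/K$ is separable and $h'(\alpha)\ne 0$, so $h'(t)\notin\frak{m}_{\hat{B}_y}$ and is therefore a unit. Hence $dt$ also generates $\Omega_{\hat{B}_y/\roi_K}^\sub{sep}$ in this case. Finally, tensoring the resulting isomorphism $\Omega_{B/\roi_K}^\sub{sep}\otimes_B\hat{B}_y\isoto\Omega_{\hat{B}_y/\roi_K}^\sub{sep}$ with $M_y$ over $\hat{B}_y$, and recalling $\Omega_{M_y/K}^\sub{cts}=\Omega_{\roi_{M_y}/\roi_K}^\sub{sep}\otimes_{\roi_{M_y}}M_y$, delivers the last assertion.
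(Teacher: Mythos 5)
Your proof is correct, but it takes a genuinely different route from the paper for the completion isomorphism. The freeness claim is argued the same way in both (adapting Lemma~\ref{lemma_differential_forms_of_K[[t]]}), and your Leibniz computation and the non-torsion retraction via $\tfrac{d}{dt}$ are exactly what the paper has in mind. For the isomorphism $\Omega_{B/\roi_K}^\sub{sep}\otimes_B\hat B_y\isoto\Omega_{\hat B_y/\roi_K}^\sub{sep}$, however, the paper avoids any case analysis: it first identifies $\Omega_{B/\roi_K}^\sub{sep}$ with $\Omega_{\roi_K[t]/\roi_K}\otimes_{\roi_K[t]}B$ via remark~\ref{remark_completion_of_fin_gen_algebra}, which gives the forward map $\phi$ for free, and then constructs an explicit inverse $\psi$ by extending the $\roi_K$-derivation $b/s\mapsto db\otimes s^{-1}-b\,ds\otimes s^{-2}$ from $B_y$ to $\hat B_y$ by $y$-adic continuity (the target being finite, hence complete, over $\hat B_y$). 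You instead classify the height-one primes of $\roi_K[[t]]$ into the mixed-characteristic case $y=\pi_K B$ and the equicharacteristic case $y=hB$, invoke Lemmas~\ref{lemma_structure_of_differential_forms_standard_fields} and~\ref{lemma_differential_forms_of_K[[t]]} to see that $\Omega_{\hat B_y/\roi_K}^\sub{sep}$ is free of rank one, and verify that $dt$ remains a generator; the only nontrivial point is that $h'(t)\in\hat B_y^\times$, which you correctly deduce from separability of $h$ over $K$ (Gauss's lemma plus characteristic zero). Both arguments are valid. The paper's buys reusability: the same proof is invoked verbatim in Lemma~\ref{lemma_differential_forms_global_to_local} with $B$ replaced by $\hat A$ and $\roi_K[t]$ by $A$, where no clean classification of primes is available, so your case-based argument would not transfer there. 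Your version, on the other hand, makes the generator explicit in each case and makes clearer why one needs characteristic zero. One small remark on phrasing: $k_y$ is the coefficient field of $M_y$ (a subfield of $\hat B_y$), not literally the residue field of $\hat B_y$; in the equicharacteristic case the reduction map identifies the two, which is what you use, but the conflation is worth flagging.
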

\begin{proof}
The first claim may be proved in an identical way to lemma \ref{lemma_differential_forms_of_K[[t]]}. Alternatively, use remark \ref{remark_completion_of_fin_gen_algebra} to deduce that $\Omega_{B/\roi_K}^\sub{sep}=\Omega_{\roi_K[t]/\roi_K}\otimes_{\roi_K[t]}B$.

If $y$ is a height one prime of $B$ then there is a natural map
\begin{align*}\phi:\Omega_{B/\roi_K}^\sub{sep}\otimes_B\hat{B}_y
	&=(\Omega_{\roi_K[t]/\roi_K}\otimes_{\roi_K[t]}B)\otimes_B\hat{B}_y\\
	&=\Omega_{\roi_K[t]/\roi_K}\otimes_{\roi_K[t]}\hat{B}_y\to\Omega_{\hat{B}_y/\roi_K}\onto\Omega_{\hat{B}_y/\roi_K}^\sub{sep},\end{align*}
and we shall now construct the inverse of $\phi$. Define an $\roi_K$-derivation of $B_y$ by \[d_1:B_y\to\Omega_{B/\roi_K}^\sub{sep}\otimes_B\hat{B}_y\,\quad b/s\mapsto db\otimes s^{-1}-b\,ds\otimes s^{-2}\] where $b\in B$, $s\in B\setminus y$ (this is well-defined). Moreover, the right hand side is a finite $\hat{B}_y$-module, hence is complete and separated for the $y$-adic topology; so $d_1$ (which is easily seen to be $y$-adically continuous) extends from $B_y$ to $\hat{B}_y$. This derivation then induces a homomorphism of $\hat{B}_y$-modules $\Omega_{\hat{B}_y/\roi_K}\to\Omega_{B/\roi_K}^\sub{sep}\otimes_B\hat{B}_y$, and this descends to \[\psi:\Omega_{\hat{B}_y/\roi_K}^\sub{sep}\to\Omega_{B/\roi_K}^\sub{sep}\otimes_B\hat{B}_y\] since $\Omega_{B/\roi_K}^\sub{sep}\otimes_B\hat{B}_y$ is a finitely generated $\hat{B}_y$-module.

It is immediate that $\psi\phi=\mbox{id}$. It is also easy to see that $\phi\psi(db)=db$ for any $b\in B_y$; since such elements are dense in the Hausdorff space $\Omega_{\hat{B}_y/\roi_K}^\sub{sep}$, we deduce $\phi\psi=\mbox{id}$.
\end{proof}

In particular, we now know that the residue map at $y$, initially defined on $\Omega_{M/K}$, factors through its quotient $\Omega_{B/\roi_K}^\sub{sep}\otimes_B M$. We may now prove reciprocity for $B$:

\begin{theorem}\label{theorem_reciprocity_base_case}
For each $\omega\in\Omega_{B/\roi_K}^\sub{sep}\otimes_B M$, the local residue $\RES_y\omega$ is zero for all but finitely many $y\lhd\!^1B$, and \[\sum_{y\lhd\!^1B}\RRES_y\omega=0\] in $K$.
\end{theorem}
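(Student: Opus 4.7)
The plan is to use $M$-linearity to reduce to elementary generators of $M$, then to compute the only two possibly nonvanishing residues explicitly via partial fractions over the splitting field of $h$ and show they cancel. By lemma \ref{lemma_2d_complete_ring_base_case} one writes $\omega=f\,dt$ for some $f\in M$, and by lemma \ref{lemma_additive_generators_of_field} combined with additivity the problem reduces to $\omega=(\pi_K^n g/h^r)\,dt$ with $h\in\roi_K[t]$ an irreducible Weierstrass polynomial of some degree $d\ge 1$, $r\ge 1$, $n\in\bb{Z}$, and $g\in B$. Iterated Weierstrass division gives $g=qh^r+P$ with $q\in B$ and $P\in\roi_K[t]$ of degree $<rd$; the summand $\pi_K^n q\,dt$ contributes zero to every residue, since at each horizontal $y=h'B$ it lies in $\roi_{F_y}\,dt\subseteq\roi_{F_y}\,dt_{F_y}$ and so has vanishing equal-characteristic residue (lemma \ref{lemma_differential_forms_of_K[[t]]}), while at $y=\pi_K B$ the expansion of $\pi_K^n q$ in $K\dblecurly{t}$ has only non-negative powers of $t$. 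So we may assume $g=P$ is a polynomial.

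The same integrality argument gives $\RES_y(\omega)=0$ at every $y=h'B$ with $h'\ne h$ irreducible Weierstrass, since $\pi_K$ and $h$ are both units in $\hat B_y$ (distinct irreducible Weierstrass polynomials being coprime in $B$). So the global sum collapses to $\RRES_{hB}(\omega)+\RRES_{\pi_K B}(\omega)$. Now fix a finite Galois extension $K'/K$ containing all roots $\alpha_1,\dots,\alpha_d\in\roi_{K'}$ of $h$, numbered so that $\alpha:=\alpha_1$ is the image of $t$ in $B/hB$ and hence generates the constant field $k_y=k_{F_{hB}}$. Partial fractions over $K'$ give $P/h^r=\sum_{i,j}c_{i,j}/(t-\alpha_i)^j$ with $c_{i,j}\in K'$; since $P/h^r\in K(t)$ is $\Gal(K'/K)$-invariant, uniqueness of the decomposition forces the Galois action to permute the pairs $(\alpha_i,c_{i,j})$, so $c_{1,j}\in k_y$ and $\sum_i c_{i,j}=\Tr_{k_y/K}(c_{1,j})$.

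In $F_{hB}=k_y((s))$ with uniformiser $s=t-\alpha$, only the $i=1$ terms are singular at $s=0$, giving $\RES_{hB}(\omega)=\pi_K^n c_{1,1}$ and $\RRES_{hB}(\omega)=\pi_K^n\Tr_{k_y/K}(c_{1,1})$. For $\RRES_{\pi_K B}$, the Weierstrass hypothesis combined with the $\alpha_i$ being Galois-conjugate forces $\nu_{\pi_K}(\alpha_i)\ge 1/d>0$, so inside $K'\dblecurly{t}$ the binomial expansion
\[\frac{1}{(t-\alpha_i)^j}=\sum_{k\ge 0}\binom{j+k-1}{k}\frac{\alpha_i^k}{t^{j+k}}\]
converges $\pi_K$-adically and contributes $\delta_{j,1}$ to the coefficient of $t^{-1}$. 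Summing, the $t^{-1}$-coefficient of $\pi_K^n P/h^r$ in $K'\dblecurly{t}$ equals $\pi_K^n\sum_i c_{i,1}=\pi_K^n\Tr_{k_y/K}(c_{1,1})$, a quantity lying in $K$ which by uniqueness of $\pi_K$-adic expansions agrees with the $t^{-1}$-coefficient computed already inside the subfield $K\dblecurly{t}$. The sign convention in the definition of the mixed-characteristic residue then gives $\RRES_{\pi_K B}(\omega)=-\pi_K^n\Tr_{k_y/K}(c_{1,1})$, cancelling $\RRES_{hB}(\omega)$.

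The hard part is the identity in the previous paragraph: matching the ``trace of the classical horizontal residue'' at the roots of $h$ with the ``$t^{-1}$-coefficient in the $\pi_K$-adic completion $K\dblecurly{t}$.'' This expresses the compatibility of two inequivalent completions of $M$ via partial fractions over the Galois closure, and crucially depends on the Weierstrass property of $h$ to ensure $\pi_K$-adic convergence of the binomial expansion of $(t-\alpha_i)^{-j}$ in $K'\dblecurly{t}$.
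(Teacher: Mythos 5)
Your proof is correct, but it takes a genuinely different route from the paper's. After the common reduction (via lemma \ref{lemma_additive_generators_of_field} and lemma \ref{lemma_2d_complete_ring_base_case}) to $\omega=(\pi_K^n g/h^r)\,dt$, the paper (a) assumes $g\in\roi_K[t]$ is a polynomial and then handles general $g\in B$ by a ``straightforward, though tedious'' continuity argument which it omits, and (b) invokes the classical reciprocity law for $\bb{P}^1_K$ over the coefficient field, reducing the matter to matching the equal-characteristic residue at $t=\infty$ with the mixed-characteristic residue at $\pi_K B$ via a direct comparison of Laurent expansions in $K((t^{-1}))$ versus $K\dblecurly{t}$. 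You replace (a) by a single Weierstrass division $g=qh^r+P$ with $P\in\roi_K[t]$ of degree $<r\deg h$, observing that $\pi_K^n q\,dt$ has trivial residue at every height-one prime, which eliminates the topological step entirely; and you replace (b) by an explicit partial-fraction decomposition of $P/h^r$ over the splitting field $K'$ of $h$, using Galois-equivariance of that decomposition to express both residues as $\pm\pi_K^n\Tr_{k_y/K}(c_{1,1})$ and cancel. Your computation at the mixed-characteristic prime --- convergence of the binomial series $(t-\alpha_i)^{-j}=\sum_{k\ge0}\binom{j+k-1}{k}\alpha_i^k t^{-j-k}$ in $K'\dblecurly{t}$ because the Weierstrass hypothesis forces $\nu(\alpha_i)>0$, and descent of the $t^{-1}$-coefficient along $K\dblecurly{t}\hookrightarrow K'\dblecurly{t}$ --- is the same phenomenon the paper exploits, just organized root-by-root rather than via a global argument. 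The paper's argument is shorter on the page because it treats $\bb{P}^1$-reciprocity as a black box; your argument is fully self-contained, avoids the unproved continuity claim, but carries the overhead of Galois bookkeeping (in particular you should note explicitly that $s=t-\tilde\alpha$, where $\tilde\alpha\in k_y\subseteq F_{hB}$ is the Teichm\"uller lift of the residue of $t$, and that the tail $\sum_{i\ne 1,j}c_{i,j}/(t-\alpha_i)^j$ lands in $\roi_{F_{hB}}$ because it equals $P/h^r-\sum_j c_{1,j}s^{-j}\in F_{hB}$ and lies in $K'[[s]]$).
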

\begin{proof}
By lemmas \ref{lemma_additive_generators_of_field} and \ref{lemma_2d_complete_ring_base_case}, it is enough to consider the case $\omega=f\,dt$ with \[f=\frac{\pi_K^ng}{h^r},\] where $h,r,n,g$ are as in lemma \ref{lemma_additive_generators_of_field}.

Let $y=t_yB$ be a prime with $t_y$ an irreducible, Weierstrass polynomial. If $t_y\neq h$, then $\pi_K^ng/h^r$ and $t$ both belong to $B_y$, and so \[\mbox{coeft}_{t_y^{-1}}\left(\frac{\pi_K^ng}{h^r}\frac{dt}{dt_y}\right)=0\] by a basic property of the residue map; that is, $\RES_y(\omega)=0$. This establishes our first assertion. For the remainder of the proof, set $y=hA$; we must prove that \begin{equation}\RRES_y(\omega)+\RRES_{\pi_KB}(\omega)=0. \label{4}\end{equation}

Suppose for a moment that $g$ belongs to $\roi_K[t]$, and consider the rational function field $K(t)\le M$. For any point $x$ of $\bb{P}_K^1$, let $K(t)_x$ be the completion of $K(t)$ at the place $x$; then $K(t)_x$ is a two-dimensional local field of equal characteristic. Let $k_x$ denote its unique coefficient field containing $K$, and let $\RES_x:\Omega_{L_x/K}^\sub{cts}\to k_x$ denote the residue map. By the assumption on $g$ we have  $\omega\in\Omega_{K(t)/K}$, and global reciprocity for $\bb{P}_K^1$ implies that $\sum_{x\in\bb{P}_K^1}\Tr_{k_x/K}\RES_x(\omega)=0$.

Furthermore, an argument as at the start of this proof proves that $\RES_x(\omega)=0$ unless $x$ corresponds to the irreducible polynomial $h$, or $x=\infty$. Moreover, in the first case, $K(t)_x=M_y$, $k_x=k_y$, and $\RES_x(\omega)=\RES_y(\omega)$. Therefore to complete the proof (with $g$ still a polynomial) it is necessary and sufficient to show that \begin{equation}\RES_{\pi_KB}(\omega)=\RES_{\infty}(\omega).\label{5}\end{equation} Note that the residue map on the left is for a two-dimensional local field of mixed characteristic, while that on the right is for one of equal characteristic. This passage between different characteristics is the key to the proof.

To prove (\ref{5}), write $t_{\infty}=t^{-1}$, which is a local parameter at $\infty$, and expand $h^{-r}$ in $K(t)_{\infty}=K((t_{\infty}))$ as $h^{-r}=\sum_{i\ge I} a_it_{\infty}^i$, say. Since $h^r$ is a Weierstrass polynomial, it is easily checked that $a_i\to 0$ in $K$ as $i\to\infty$; therefore the series $\sum_{i\le -I} a_{-i}t^i$ is a well-defined element of $M_{\pi_kB}=K\dblecurly{t}$. Moreover, since multiplication in both $K\dblecurly{t}$ and $K((t_{\infty}))$ is given by formal multiplication of series, we deduce \[h^r\sum_{i\le -I} a_{-i}t^i=1,\] i.e. $\sum_{i\le -I} a_{-i}t^i$ is the series expansion of $h^{-r}$ in $M_{\pi_kB}=K\dblecurly{t}$. Now let $\sum_ib_it_{\infty}^i$ be the expansion of $\pi_K^ng/h^r$ of $K(t)_{\infty}$; then $\sum_i b_{-i}t^i$ is the formal expansion of $\pi_K^ng/h^r$ in $M_{\pi_kB}$, and so
\begin{align*}
\RES_{\infty}\left(\frac{\pi_K^ng}{h^r}dt\right)
	&=\mbox{coeft}_{t_{\infty}^{-1}}\left(\frac{\pi_K^ng}{h^r}\frac{dt}{dt_{\infty}}\right)\\
	&=\mbox{coeft}_{t_{\infty}^{-1}}\left(-t_{\infty}^{-2}\sum_ib_it_{\infty}^i\right)\\
	&=-b_1\\
	&=-\mbox{coeft}_{t^{-1}}\sum_i b_{-i}t^i\\
	&=\RES_{\pi_KB}\left(\frac{\pi_K^ng}{h^r}dt\right).
\end{align*}

This completes the proof of identity (\ref{4}) for $g\in\roi_K[t]$; to prove it in general and complete the proof, it is enough to check that both sides of (\ref{4}) are continuous functions of $g$, with respect to the $\frak{m}_B$-adic topology on $B$ and the discrete valuation topology on $K$. This is straightforward, though tedious, and so we omit it.
\end{proof}

\subsection{Reciprocity for complete rings}
Now we extend the reciprocity law to the general case. Fix both a ring $A$ satisfying conditions (\dag) (see the start of section \ref{section_reciprocity_for_rings}) and the ring of integers of a local field $\roi_K\le A$. Reciprocity for $A$ will follows in the usual way by realising $A$ as a finite extension of $\roi_K[[T]]$:

\begin{lemma}
There is a ring $B$ between $\roi_K$ and $A$ which is $\roi_K$-isomorphic to $\roi_K[[T]]$, and such that $A$ is a finite $B$-module.
\end{lemma}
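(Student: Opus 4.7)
My plan is to construct $B$ as the image of an $\roi_K$-algebra map $\phi\colon\roi_K[[X]]\to A$ sending the indeterminate $X$ to a suitably chosen element $T\in\frak{m}_A$, and then to verify injectivity of $\phi$ and finiteness of $A$ over its image by a Cohen-style structure argument. The element $T$ should be chosen so that $\pi_K,T$ is a system of parameters for $A$, which is possible by the following standard argument: since $A$ is a two-dimensional local domain and $\pi_K$ is a non-zero non-unit of $A$ (the second statement using that $\roi_K\to A$ is local, as in the proof of lemma \ref{lemma_unique_embedding_of_Zp_into_local_ring}), Krull's principal ideal theorem gives $\dim A/\pi_K A=1$, so there are only finitely many minimal primes over $\pi_K A$, each of height one; by prime avoidance I can pick $T\in\frak{m}_A$ lying outside all of them, so that $\langle\pi_K,T\rangle$ is $\frak{m}_A$-primary.

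Next I would build the map $\phi$. The $\roi_K$-algebra homomorphism $\roi_K[X]\to A$ given by $X\mapsto T$ is continuous from the $(\pi_K,X)$-adic topology on $\roi_K[X]$ to the $\frak{m}_A$-adic topology on $A$, so, since $A$ is $\frak{m}_A$-adically complete and $\roi_K[[X]]$ is the $(\pi_K,X)$-adic completion of $\roi_K[X]$ (using that $\roi_K$ is itself complete), the map extends uniquely to a continuous $\roi_K$-algebra map $\phi\colon\roi_K[[X]]\to A$. Set $B'=\phi(\roi_K[[X]])$; as a quotient of $\roi_K[[X]]$ it is a complete Noetherian local subring of $A$, with residue field $B'/\frak{m}_{B'}$ equal to $\res K$. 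The containment $\frak{m}_{B'}A\supseteq\langle\pi_K,T\rangle$ makes $\frak{m}_{B'}A$ an $\frak{m}_A$-primary ideal; hence $A/\frak{m}_{B'}A$ is a quotient of some $A/\frak{m}_A^n$, which is finite-dimensional over the finite field $A/\frak{m}_A$ and therefore finite over $\res K=B'/\frak{m}_{B'}$. Since $A$ is $\frak{m}_{B'}$-adically complete, the topological Nakayama lemma now implies that $A$ is a finite $B'$-module.

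To finish, the finiteness of $A$ over $B'$ forces $\dim B'=\dim A=2=\dim\roi_K[[X]]$, so $\ker\phi$ has height zero in the domain $\roi_K[[X]]$; hence $\ker\phi=0$ and $\phi$ restricts to an $\roi_K$-isomorphism $\roi_K[[X]]\isoto B'$. Taking $B=B'$ completes the proof. The main technical step, and the principal potential obstacle, is the topological Nakayama argument giving finiteness of $A$ over $B'$; but this is standard once the system of parameters has been chosen, so in practice no step presents serious difficulty.
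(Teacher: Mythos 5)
Your argument is correct, but it follows a genuinely different route from the paper's. The paper invokes Cohen's structure theorem \cite{Cohen1946} as a black box to produce a two-dimensional regular coefficient ring $B_0\cong\bb{Z}_q[[T]]$ inside $A$ over which $A$ is finite, then forms the desired $B$ as the subring generated by $\roi_K$ and (the $\bb{Z}_p[[T]]$ part of) $B_0$; the main work there is showing that the resulting map $j:\roi_K[[T]]\to A$ is injective, using the classification of height-one primes of $\roi_K[[T]]$, and it must first invoke lemma \ref{lemma_unique_embedding_of_Zp_into_local_ring} to ensure that the copy of $\bb{Z}_p$ inside Cohen's $B_0$ agrees with the one inside the fixed $\roi_K$. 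Your proof avoids Cohen's theorem and that compatibility issue entirely: you pick $T$ by prime avoidance so that $\pi_K,T$ is a system of parameters, build $\phi:\roi_K[[X]]\to A$ directly using completeness of $A$, and deduce finiteness of $A$ over $B'=\phi(\roi_K[[X]])$ from the complete Nakayama lemma after noting that $\frak{m}_{B'}A=\langle\pi_K,T\rangle A$ is $\frak{m}_A$-primary and $A/\frak{m}_A$ is finite; injectivity of $\phi$ then falls out of dimension theory, since $\roi_K[[X]]$ is a two-dimensional catenary domain and a nonzero kernel would force $\dim B'<2=\dim A$. In effect you are re-proving the needed special case of Cohen's theorem from scratch, but starting from the given $\roi_K$ rather than from a Witt-vector coefficient ring, which makes the argument self-contained and sidesteps the uniqueness-of-embedding step; the paper's version is shorter modulo the citation. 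Both arguments are sound.
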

\begin{proof}
By \cite[Theorem 16]{Cohen1946}, $A$ contains a subring $B_0$, over which it is a finitely generated module, and such that $B_0$ is a two-dimensional, $p$-adic ring with residue field equal to that of $A$. Supposing that this residue field is $\bb{F}_q$, we therefore have an isomorphism $i:\bb{Z}_q[[T]]\isoto B_0$. By the uniqueness of the embedding $\bb{Z}_p\into A$ (lemma \ref{lemma_unique_embedding_of_Zp_into_local_ring}), it follows that $i(\bb{Z}_p)\subseteq\roi_K$. Define \[j:\roi_K[[T]]=\bb{Z}_p[[T]]\otimes_{\bb{Z}_p}\roi_K\to A,\quad f\otimes\al\mapsto i(f)\al.\]

The kernel of $j$ is a prime ideal of $\roi_K[[T]]$ whose contraction to $\roi_K$ is zero. If the kernel is non-zero then there is an Eisenstein polynomial $h\in\roi_K[T]$ such that that $h(i(T))=0$ (this follows from the classification of prime ideals in $\roi_K[[T]]$ discussed earlier), suggesting that $i(T)$ is algebraic over $\roi_K$ and hence over $\bb{Z}_p$; this contradicts the injectivity of $i$. Hence $j$ is an isomorphism onto its image, as desired.
\end{proof}

Let $B$ be as given by the previous lemma, and write $M=\Frac B$, $F=\Frac A$. We now generalise lemma \ref{lemma_2d_complete_ring_base_case}. However, note that if $A$ can be written as the completion of a localisation of a finitely generated $\roi_K$-algebra, then the following proof can be significantly simplified, simply by imitating the proof of lemma \ref{lemma_2d_complete_ring_base_case}; see also lemma \ref{lemma_differential_forms_global_to_local}.

\begin{lemma}
$\Omega_{A/\roi_K}^\sub{sep}$ is a finitely generated $A$-module of rank $1$. For each height one prime $y\lhd A$, the natural map $\Omega_{A/\roi_K}\otimes_A \hat{A}_y\to\Omega_{\hat{A}_y/\roi_K}$ descends to an isomorphism \[\Omega_{A/\roi_K}^\sub{sep}\otimes_A \hat{A}_y\isoto\Omega_{\hat{A}_y/\roi_K}^\sub{sep}.\] Hence there is an induced isomorphism $\Omega_{A/\roi_K}^\sub{sep}\otimes_A F_y\stackrel{\simeq}{\to}\Omega_{F_y/K}^\sub{cts}$.
\end{lemma}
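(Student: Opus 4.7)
The plan is to reduce everything to the case $B \cong \roi_K[[T]]$ of Lemma \ref{lemma_2d_complete_ring_base_case} by means of the finite extension $B \le A$.

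For the finite generation and rank one statement, I would apply Lemma \ref{lemma_differential_forms_of_tower} to the tower $\roi_K \subseteq B \subseteq A$: its hypotheses hold because $\Omega_{B/\roi_K}^\sub{sep}$ is free of rank one by Lemma \ref{lemma_2d_complete_ring_base_case}, and $\Frac A/\Frac B$ is a finite separable extension (both fields are of characteristic zero). This yields
$$0 \to \Omega_{B/\roi_K}^\sub{sep} \otimes_B A \to \Omega_{A/\roi_K}^\sub{sep} \to \Omega_{A/B} \to 0.$$
The left-hand term is free of rank one over $A$, and the right-hand term is finitely generated (as $A/B$ is finite) and torsion (as $\Omega_{\Frac A/\Frac B} = 0$ by separability). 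Hence $\Omega_{A/\roi_K}^\sub{sep}$ is a finitely generated $A$-module of rank one.

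For the localisation-completion isomorphism, fix a height one prime $y \lhd A$ with contraction $y' = y \cap B$. Since $A$ is a finite $B$-module, $A \otimes_B \hat{B}_{y'} \cong \prod_{y_i \mid y'} \hat{A}_{y_i}$, and so $\hat{A}_y$ is a direct factor, in particular a finite $\hat{B}_{y'}$-algebra. I would then imitate the derivation-extension argument of Lemma \ref{lemma_2d_complete_ring_base_case}: the module $\Omega_{A/\roi_K}^\sub{sep} \otimes_A \hat{A}_y$ is finitely generated over $\hat{A}_y$ by the first part, hence $y$-adically complete and Hausdorff; the $\roi_K$-derivation $d_1 : A_y \to \Omega_{A/\roi_K}^\sub{sep} \otimes_A \hat{A}_y$ given by $a/s \mapsto da \otimes s^{-1} - a\,ds \otimes s^{-2}$ is $y$-adically continuous and therefore extends uniquely to a continuous derivation on $\hat{A}_y$. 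This derivation induces, upon descent to the separated quotient, an $\hat{A}_y$-linear map
$$\psi : \Omega_{\hat{A}_y/\roi_K}^\sub{sep} \to \Omega_{A/\roi_K}^\sub{sep} \otimes_A \hat{A}_y.$$

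To see that $\psi$ is the inverse of the natural map $\phi$, I would apply the Five Lemma to compare the tower sequence for $A/B/\roi_K$ (tensored with the flat $A$-algebra $\hat{A}_y$) with the tower sequence from Lemma \ref{lemma_differential_forms_of_tower} for $\hat{A}_y/\hat{B}_{y'}/\roi_K$. The left-hand terms agree, both being canonically $\Omega_{\hat{B}_{y'}/\roi_K}^\sub{sep} \otimes_{\hat{B}_{y'}} \hat{A}_y$ by Lemma \ref{lemma_2d_complete_ring_base_case} applied to $B$; the right-hand terms agree by the standard base change $\Omega_{A/B} \otimes_A \hat{A}_y \cong \Omega_{\hat{A}_y/\hat{B}_{y'}}$, itself deduced from base change of K\"ahler differentials along $A \to \prod_i \hat{A}_{y_i}$ restricted to the $y$-th factor. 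The final statement $\Omega_{A/\roi_K}^\sub{sep} \otimes_A F_y \isoto \Omega_{F_y/K}^\sub{cts}$ then follows immediately by inverting a uniformiser of $\hat{A}_y$.

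The main obstacle is the compatibility check required by the Five Lemma: one must verify that $\psi$ intertwines the canonical identifications on the left- and right-hand terms of the two rows. This reduces, by uniqueness in the universal property of K\"ahler differentials into separated targets, to checking agreement on the images of $B$ and $A$, which is routine; but it is the one step that genuinely uses the construction of $\psi$ rather than just the abstract tower sequences.
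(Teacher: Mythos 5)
Your proof is correct and follows essentially the same route as the paper: both obtain the tower exact sequence from Lemma \ref{lemma_differential_forms_of_tower} applied to $\roi_K\subseteq B\subseteq A$ and to the completed tower $\roi_K\subseteq\hat{B}_{y'}\subseteq\hat{A}_y$, construct $\psi$ by the derivation-extension argument transplanted from Lemma \ref{lemma_2d_complete_ring_base_case}, and conclude via the (short) five lemma. You have also flagged the one genuine subtlety --- commutativity of the comparison diagram --- which the paper passes over with the phrase ``natural enough so that the diagram will commute.''
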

\begin{proof}
Lemmas \ref{lemma_2d_complete_ring_base_case} and \ref{lemma_differential_forms_of_tower} imply that there is a natural exact sequence \begin{equation}0\to\Omega_{B/\roi_K}^\sub{sep}\otimes_B A\to\Omega_{A/\roi_K}^\sub{sep}\to\Omega_{A/B}\to 0,\label{6}\end{equation} which proves the first claim since $\Omega_{A/B}$ is a finitely generated, torsion $A$-module. 

Now we are going to construct a commutative diagram with exact rows:
\[\minCDarrowwidth23pt\begin{CD}
0&@>>>&\Omega_{B/\roi_K}^\sub{sep}\otimes_B \hat{A}_y&@>>>&\Omega_{A/\roi_K}^\sub{sep}\otimes_A\hat{A}_y&@>>>&\Omega_{A/B}\otimes_A\hat{A}_y&@>>>& 0\\
&&&&@A\psi_B' A\cong A&&@A{\psi_A}AA&&@AA\cong A&\\
0&@>>>&\Omega_{\hat{B}_{y'}/\roi_K}^\sub{sep}\otimes_{\hat{B}_{y'}}\hat{A}_y&@>>>&\Omega_{\hat{A}_y/\roi_K}^\sub{sep}&@>>>&\Omega_{\hat{A}_y/\hat{B}_y}&@>>>& 0
\end{CD}\]
The top line is obtained by tensoring (\ref{6}) with $\hat{A}_y$. For the bottom row, set $y'=y\cap B$, use lemma \ref{lemma_2d_complete_ring_base_case} to see that $\Omega_{B_{y'}/\roi_K}^\sub{sep}$ is free and that we may therefore apply lemma \ref{lemma_differential_forms_of_tower} to the tower of rings $\hat{A}_y\ge\hat{B}_{y'}\ge\roi_K$. In lemma \ref{lemma_2d_complete_ring_base_case} we also constructed a natural map \[\psi_B=\psi:\Omega_{\hat{B}_y/\roi_K}^\sub{sep}\to\Omega_{B/\roi_K}^\sub{sep}\otimes_B\hat{B}_y;\] its definition did not use any special properties of $B$ and so we may similarly define $\psi_A$. Base change $\psi_B$ by $\hat{A}_y$ to obtain the isomorphism $\psi_B'$ in the diagram. Finally, one may see in a number of different ways that there is an isomorphism $\Omega_{A/B}\otimes_A\hat{A}_y\cong\Omega_{\hat{A}_y/\hat{B}_y}$ which is natural enough so that the diagram will commute.

It follows that $\psi_A$ is an isomorphism, as required.
\end{proof}

The previous lemma implies that $\Omega_{B/\roi_K}^\sub{sep}\otimes_BF\cong\Omega_{A/\roi_K}^\sub{sep}\otimes_AF$, and so we have natural trace maps \begin{align*}\Tr_{F/M}&:\Omega_{A/\roi_K}^\sub{sep}\otimes_AF\to\Omega_{B/\roi_K}^\sub{sep}\otimes_BM\\ \Tr_{F_Y/M_y}&:\Omega_{A/\roi_K}^\sub{sep}\otimes_AF_Y\to\Omega_{B/\roi_K}^\sub{sep}\otimes_B M_y,\end{align*} where $Y$ is a height one prime of $A$ and $y=Y\cap B$. Using these we establish the expected functoriality for our residue maps:

\begin{proposition}\label{proposition_functoriality_in_complete_case}
Let $y$ be a fixed height one prime of $B$. Then for all $\omega\in\Omega_{A/\roi_K}^\sub{sep}\otimes_A F$, we have \[\RRES_y\Tr_{F/M}(\omega)=\sum_{Y|y}\RRES_Y(\omega),\] where $Y$ ranges over the (finitely many) height one primes of $A$ sitting over $y$.
\end{proposition}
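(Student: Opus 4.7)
The plan is to reduce the assertion to the local functoriality of residues already established for mixed characteristic two-dimensional local fields (proposition \ref{proposition_functoriality_for_mixed_local_fields}), via the semi-local completion of $A$ at $y$. Concretely, I expect a canonical isomorphism
\[ A\otimes_B\hat{B}_y\;\cong\;\prod_{Y\mid y}\hat{A}_Y \]
under which the trace $\Tr_{F/M}$, after base change to $M_y$, becomes $\sum_{Y\mid y}\Tr_{F_Y/M_y}$.

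First I would establish this decomposition. Since $A$ is a finite $B$-module, $A_y:=(B\setminus y)^{-1}A$ is a finite, Noetherian, semi-local $B_y$-algebra whose maximal ideals correspond bijectively to the (finitely many) height-one primes $Y\lhd\!^1 A$ lying over $y$; moreover $yA_y$ contains a power of the Jacobson radical (the fibre $A_y/yA_y$ is Artinian, being finite over the field $B_y/yB_y$), so the $y$-adic topology on $A_y$ coincides with its Jacobson-adic topology. As completion of a finite module over a Noetherian ring commutes with the tensor product, $A\otimes_B\hat{B}_y$ is the $y$-adic completion of $A_y$, and such a semi-local completion splits as the product of its localizations $\hat{A}_Y$, giving the displayed isomorphism. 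Inverting the image of $B\setminus\{0\}$ then yields $F\otimes_M M_y\cong\prod_{Y\mid y}F_Y$, and the trace decomposes as advertised.

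Next I would transport $\omega$ through this decomposition. By the preceding lemma $\Omega_{A/\roi_K}^\sub{sep}\otimes_A F_Y\isoto\Omega_{F_Y/K}^\sub{cts}$ for each $Y$, so $\omega$ corresponds to a tuple $(\omega_Y)_{Y\mid y}\in\bigoplus_{Y\mid y}\Omega_{F_Y/K}^\sub{cts}$, and compatibility of trace with the decomposition of rings (and with the base-change identification of differential forms) gives
\[ \Tr_{F/M}(\omega)\otimes_M 1\;=\;\sum_{Y\mid y}\Tr_{F_Y/M_y}(\omega_Y)\quad\mbox{in }\Omega_{M_y/K}^\sub{cts}. \]
Applying $\RES_{M_y}$ and invoking proposition \ref{proposition_functoriality_for_mixed_local_fields} at each $Y$ gives $\RES_{M_y}(\Tr_{F_Y/M_y}(\omega_Y))=\Tr_{k_Y/k_y}(\RES_{F_Y}(\omega_Y))$; composing with $\Tr_{k_y/K}$ and using transitivity $\Tr_{k_y/K}\circ\Tr_{k_Y/k_y}=\Tr_{k_Y/K}$ collapses the expression to $\sum_{Y\mid y}\RRES_Y(\omega)$, which is the desired identity.

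The main obstacle is the first step: pinning down the semi-local completion cleanly and verifying that, under it, the trace maps on rings and on separated differentials match the product of the local traces (this last point uses that the identification $\Omega_{A/\roi_K}^\sub{sep}\otimes_A F_Y\isoto\Omega_{F_Y/K}^\sub{cts}$ is natural in $Y$). Once this commutative-algebra bookkeeping is secured, the proposition follows immediately from the already-established local functoriality of residues combined with transitivity of the field trace on the constant subfields $k_Y\supseteq k_y\supseteq K$.
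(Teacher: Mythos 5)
Your proposal follows the paper's proof essentially verbatim: the paper likewise sets $A_y=(B\setminus y)^{-1}A$, observes that $A_y/B_y$ is a finite extension of Dedekind domains whose maximal ideals are the primes $Y$ over $y$, invokes the local-global trace formula $\Tr_{F/M}=\sum_{Y|y}\Tr_{F_Y/M_y}$, applies the local functoriality of residues, and finishes by composing with $\Tr_{k_y/K}$. Your semi-local completion $A\otimes_B\hat{B}_y\cong\prod_{Y|y}\hat{A}_Y$ is the same decomposition, just phrased via completion rather than directly via Dedekind theory.

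There is one genuine gap: you cite only proposition \ref{proposition_functoriality_for_mixed_local_fields} for the local functoriality step, and you explicitly frame the whole plan as a reduction to the \emph{mixed-characteristic} functoriality, but the statement allows $y$ to be any height one prime of $B\cong\roi_K[[T]]$. When $p\notin y$ (so $y$ is generated by an irreducible Weierstrass polynomial), the fields $M_y$ and $F_Y$ are two-dimensional local fields of \emph{equal characteristic}, and proposition \ref{proposition_functoriality_for_mixed_local_fields} does not apply; you need proposition \ref{proposition_functoriality_in_equal_characteristic_case} instead. The paper cites both propositions at precisely this point. As written your argument only covers the single prime $y=\pi_K B$; the fix is trivial, but it does need to be said.
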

\begin{proof}
Set $A_y=A\otimes_BB_y=(B\setminus y)^{-1}A\subseteq F$. Then $A_y/B_y$ is a finite extension of Dedekind domains, with the maximal ideals of $A_y$ corresponding to the primes $Y$ of $A$ (necessarily of height one) sitting over $y$. Therefore, for any $x\in F$, one has the usual local-global trace formula $\Tr_{F/M}(x)=\sum_{Y|y}\Tr_{F_Y/M_y}(x)$. In terms of differential forms, \[\Tr_{F/M}\omega=\sum_{Y|y}\Tr_{F_Y/M_y}\omega\] for all $\omega\in\Omega_{A/\roi_K}^\sub{sep}\otimes_A F$. Applying $\RES_y$ to each side of this expression and using propositions \ref{proposition_functoriality_in_equal_characteristic_case} and \ref{proposition_functoriality_for_mixed_local_fields} obtains \[\RES_y\Tr_{F/M}(\omega)=\sum_{Y|y}\Tr_{k_Y/k_y}\RES_Y(\omega).\] Apply $\Tr_{k_y/K}$ to complete the proof.
\end{proof}

Our desired reciprocity for $A$ follows in the usual way:

\begin{theorem}\label{theorem_general_complete_reciprocity}
For each $\omega\in\Omega_{A/\roi_K}^\sub{sep}\otimes_A F$, the local residue $\RES_y\omega$ is zero for all but finitely many $y\lhd\!^1A$, and \[\sum_{y\lhd\!^1A}\RRES_y\omega=0.\]
\end{theorem}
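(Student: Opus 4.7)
The plan is to reduce the reciprocity law for $A$ to the base case already proved for $B = \roi_K[[T]]$ (Theorem \ref{theorem_reciprocity_base_case}), using the trace-compatibility of residues established in Proposition \ref{proposition_functoriality_in_complete_case}. The summation assertion will fall out quickly; the subtlety is the finiteness claim, which I would handle separately beforehand.

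For finiteness, given $\omega \in \Omega_{A/\roi_K}^\sub{sep} \otimes_A F$ I would write $\omega = \eta/s$ with $\eta \in \Omega_{A/\roi_K}^\sub{sep}$ and $s \in A \setminus \{0\}$. The preceding lemma identifies $\Omega_{A/\roi_K}^\sub{sep} \otimes_A \hat{A}_y$ with $\Omega_{\hat{A}_y/\roi_K}^\sub{sep}$, so $\omega$ lies in $\Omega_{\hat{A}_y/\roi_K}^\sub{sep}$ for every $y \lhd\!^1 A$ not containing $s$, and by Krull's Hauptidealsatz this excludes only finitely many $y$. Among these "integral" primes, those of equal characteristic (i.e.\ $p \notin y$) automatically have vanishing residue: Lemma \ref{lemma_differential_forms_of_K[[t]]} gives $\Omega_{\roi_{F_y}/\roi_K}^\sub{sep} = \roi_{F_y}\, dt_y$, so $\omega$ has the shape $f\, dt_y$ with $f \in \roi_{F_y}$ a power series in $t_y$, whose $t_y^{-1}$-coefficient is zero. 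The remaining primes are those of mixed characteristic, i.e.\ the finitely many primes minimal over $pA$. This covers every $y$ outside a finite set.

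For the zero-sum statement, I would apply the trace map $\Tr_{F/M}$ (available since $\Omega_{B/\roi_K}^\sub{sep} \otimes_B F \cong \Omega_{A/\roi_K}^\sub{sep} \otimes_A F$) to obtain an element of $\Omega_{B/\roi_K}^\sub{sep} \otimes_B M$, then invoke Theorem \ref{theorem_reciprocity_base_case} for $B$:
\[
\sum_{y \lhd\!^1 B} \RRES_y(\Tr_{F/M}\omega) = 0.
\]
Proposition \ref{proposition_functoriality_in_complete_case} rewrites each summand as $\sum_{Y \mid y} \RRES_Y(\omega)$, and assembling the iterated sum yields the desired identity, provided the double sum genuinely equals $\sum_{Y \lhd\!^1 A} \RRES_Y(\omega)$.

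The single bookkeeping point requiring care is this last rearrangement: every $Y \lhd\!^1 A$ must appear exactly once, which amounts to $Y \cap B$ having height one in $B$ for every such $Y$. Since $A/B$ is a finite extension of Noetherian domains with $B = \roi_K[[T]]$ regular and hence integrally closed, going-down applies and $\mbox{ht}(Y) = \mbox{ht}(Y \cap B)$. The only genuine obstacle is the finiteness claim, which cannot be deduced from finiteness over $B$ alone since distinct primes of $A$ above a single $y \lhd\!^1 B$ could a priori have non-zero residues that cancel in $\RRES_y(\Tr_{F/M}\omega)$; the integrality argument given in the first step is what sidesteps this difficulty.
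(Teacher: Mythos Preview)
Your proposal is correct and follows essentially the same route as the paper: finiteness is handled by observing that away from the finitely many height-one primes containing the denominator or $p$ the form is integral at $y$ and hence has zero residue (the paper phrases this via $\omega=f\,dg$ with $f,g\in A_y$ rather than your $\omega=\eta/s$, but the content is identical), and the vanishing sum is obtained by grouping $\sum_{Y\lhd^1A}$ as $\sum_{y\lhd^1B}\sum_{Y\mid y}$, applying Proposition~\ref{proposition_functoriality_in_complete_case}, and invoking Theorem~\ref{theorem_reciprocity_base_case}. Your explicit verification that $\operatorname{ht}(Y\cap B)=1$ via going-down (using normality of $B$) is a point the paper leaves implicit, so your write-up is in fact slightly more complete on that detail.
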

\begin{proof}
Standard divisor theory implies that any $f\in\mult{F}$ belongs to $A_y$ for all but finitely many $y\lhd\!^1A$. If $fdg$ is a nonzero element of $\Omega_{A/\roi_K}^\sub{sep}\otimes_A F$, then $\RES_Yfdg=0$ for any $Y\lhd\!^1A$ which satisfies the following conditions: $p\notin Y$ and $f,g\in A_Y$. Since all but finitely many $Y$ satisfy these conditions, we have proved the first claim.

We may now complete the proof with the usual calculation, by reducing reciprocity via the previous proposition to the already-proved reciprocity for $B$:
\begin{align*}
\sum_{Y\lhd\!^1 A}\RRES_Y\omega
	&=\sum_{y\lhd\!^1 B}\sum_{Y|y}\RRES_Y\omega\\
	&=\sum_{y\lhd\!^1 B}\RRES_y(\Tr_{F/M}\omega)\\
	&=0.
\end{align*}
\end{proof}

\subsection{Reciprocity for incomplete rings}\label{subsection_reciprocity_for_incomplete_rings}
We have thus far restricted our attention to complete local rings; we will now remove the completeness hypothesis. We do not prove reciprocity in the fullest generality, but restrict to those rings which will later arise from an arithmetic surface. Let $\roi_K$ be a discrete valuation ring of characteristic zero and with finite residue field, and $A\ge\roi_K$ a two-dimensional, normal, local ring with finite residue field of characteristic $p$; assume further that $A$ is the localisation of a finitely generated $\roi_K$-algebra.

Since $A$ is excellent, its completion $\hat{A}$ is also normal; therefore $\hat{A}$ satisfies conditions (\dag) from the start of the section, and $\hat{\roi}_K\le\hat{A}$ is the ring of integers of a local field, as has appeared in the previous subsections. Write $F=\Frac\hat{A}$ and $\hat{K}=\Frac{\hat{\roi}_K}$.

The following global-to-local isomorphism is extremely useful for explicit calculations. Since the notation can look confusing, let us mention that if $Y$ is a height one prime of $\hat{A}$, then the completion of the discrete valuation ring $(\hat{A})_Y$ is denoted $\hat{\hat{A}_Y}$.

\begin{lemma}\label{lemma_differential_forms_global_to_local}
Let $Y$ be a height one prime of $\hat{A}$; then the natural map \[\Omega_{A/\roi_K}\otimes_A\hat{\hat{A}_Y}\to\Omega_{\hat{\hat{A}_Y}/\hat{\roi}_K}^\sub{sep}\] is an isomorphism.
\end{lemma}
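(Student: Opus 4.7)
The plan is to factor the natural map as a composition of three isomorphisms corresponding to the tower $A\subseteq\hat{A}\subseteq\hat{\hat{A}_Y}$ and to the base change $\roi_K\subseteq\hat{\roi}_K$. For the first step, since $A$ is the localisation of a finitely generated $\roi_K$-algebra $A_0$ at some prime $\frak{p}$, remark \ref{remark_completion_of_fin_gen_algebra} applied to the pair $(A_0,\frak{p})$ yields the natural isomorphism $\Omega_{A/\roi_K}\otimes_A\hat{A}\isoto\Omega_{\hat{A}/\roi_K}^\sub{sep}$; in particular $\Omega_{\hat{A}/\roi_K}^\sub{sep}$ is a finitely generated $\hat{A}$-module. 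Base-changing along the flat map $\hat{A}\to\hat{\hat{A}_Y}$ gives
\[\Omega_{A/\roi_K}\otimes_A\hat{\hat{A}_Y}\isoto\Omega_{\hat{A}/\roi_K}^\sub{sep}\otimes_{\hat{A}}\hat{\hat{A}_Y}.\]

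Second, I would repeat verbatim the argument of lemma \ref{lemma_2d_complete_ring_base_case}, now with $\hat{A}$ and $Y$ in place of $B$ and $y$, to identify this module with $\Omega_{\hat{\hat{A}_Y}/\roi_K}^\sub{sep}$. The natural forward map $\phi$ exists by the universal property of differentials; to invert it one defines an $\roi_K$-derivation
\[d_1:\hat{A}_Y\To\Omega_{\hat{A}/\roi_K}^\sub{sep}\otimes_{\hat{A}}\hat{\hat{A}_Y},\qquad b/s\mapsto db\otimes s^{-1}-b\,ds\otimes s^{-2}.\]
The target is finitely generated over the complete local ring $\hat{\hat{A}_Y}$ (this is precisely where the previous step is used), hence is $Y$-adically separated and complete, so $d_1$ extends continuously to $\hat{\hat{A}_Y}$ and descends to the separated quotient. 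That the resulting map is inverse to $\phi$ follows by density of $\hat{A}_Y$ in $\hat{\hat{A}_Y}$, exactly as in the cited lemma.

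Third, one checks that the canonical surjection $\Omega_{\hat{\hat{A}_Y}/\roi_K}^\sub{sep}\onto\Omega_{\hat{\hat{A}_Y}/\hat{\roi}_K}^\sub{sep}$ is an isomorphism. The universal derivation $d:\hat{\hat{A}_Y}\to\Omega_{\hat{\hat{A}_Y}/\roi_K}^\sub{sep}$ lands in a finitely generated module over a complete Noetherian local ring, and the Leibniz rule gives $d(\frak{m}^n)\subseteq\frak{m}^{n-1}\Omega_{\hat{\hat{A}_Y}/\roi_K}^\sub{sep}$ with $\frak{m}=\frak{m}_{\hat{\hat{A}_Y}}$, so $d$ is continuous for the $\frak{m}$-adic topologies. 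The topology that $\hat{\hat{A}_Y}$ induces on $\roi_K$ is the $\pi_K$-adic one, so the closure of $\roi_K$ inside $\hat{\hat{A}_Y}$ is $\hat{\roi}_K$; by continuity $d$ therefore vanishes on $\hat{\roi}_K$, giving the desired factorisation. Composing the three isomorphisms yields the statement. I expect the bulk of the work to be in the first step; once $\Omega_{\hat{A}/\roi_K}^\sub{sep}$ is known to be finitely generated over $\hat{A}$, the remaining continuity and density arguments are essentially identical to ones already executed in the paper.
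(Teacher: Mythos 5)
Your proposal follows essentially the same route as the paper's, which is simply to run the argument of lemma \ref{lemma_2d_complete_ring_base_case} with $\hat{A}$ in place of $B$ and $A$ in place of $\roi_K[t]$, plus one extra observation about $\hat{\roi}_K$. Steps one and two of your proposal are fine. The difficulty is in step three. There you assert that the $\frak{m}_{\hat{\hat{A}_Y}}$-adic topology restricts to the $\pi_K$-adic topology on $\roi_K$, and hence that $\hat{\roi}_K$ is the closure of $\roi_K$ inside $\hat{\hat{A}_Y}$. But this is false for every height one prime $Y\lhd\hat{A}$ which does not contain $p$: for such $Y$, $\pi_K$ is a unit in $\hat{A}_Y$, hence in $\hat{\hat{A}_Y}$, so $\frak{m}_{\hat{\hat{A}_Y}}^n\cap\roi_K=0$ for all $n\ge 1$ and the induced topology on $\roi_K$ is discrete; the closure of $\roi_K$ is just $\roi_K$, not $\hat{\roi}_K$, and your continuity argument proves nothing. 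Such primes certainly occur (for instance $Y=T\hat{A}$ when $\hat{A}\cong\hat{\roi}_K[[T]]$), so the gap is real.

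The repair — and this is precisely the ``one additional observation'' the paper makes — is to perform the continuity argument one level down, at $\hat{A}$ rather than at $\hat{\hat{A}_Y}$. Since $\pi_K\in\frak{m}_A\subseteq\frak{m}_{\hat{A}}$, the $\frak{m}_{\hat{A}}$-adic topology does restrict to the $\pi_K$-adic topology on $\roi_K$ and the universal derivation $d:\hat{A}\to\Omega_{\hat{A}/\roi_K}^\sub{sep}$ vanishes on the closure $\hat{\roi}_K$ of $\roi_K$. This then propagates automatically through step two: the derivation $d_1:\hat{\hat{A}_Y}\to\Omega_{\hat{A}/\roi_K}^\sub{sep}\otimes_{\hat{A}}\hat{\hat{A}_Y}$ restricts on $\hat{A}$ to $d$ followed by the inclusion, hence kills $\hat{\roi}_K$; and since $\psi$ is injective, the universal derivation $\hat{\hat{A}_Y}\to\Omega_{\hat{\hat{A}_Y}/\roi_K}^\sub{sep}$ also kills $\hat{\roi}_K$, giving $\Omega_{\hat{\hat{A}_Y}/\roi_K}^\sub{sep}=\Omega_{\hat{\hat{A}_Y}/\hat{\roi}_K}^\sub{sep}$ without any separate continuity argument at the $\hat{\hat{A}_Y}$ level.
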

\begin{proof}
One follows the proof of lemma \ref{lemma_2d_complete_ring_base_case} almost exactly, replacing $B$ by $\hat{A}$ and $\roi_K[t]$ by $A$. The only additional observation which needs to be made is that the universal derivation $d:\hat{A}\to\Omega_{\hat{A}/\roi_K}^\sub{sep}$ must be trivial on $\hat{\roi}_K$, and therefore $\Omega_{\hat{A}/\roi_K}^\sub{sep}=\Omega_{\hat{A}/\hat{\roi}_K}^\sub{sep}$.
\end{proof}

For $Y\lhd\!^1\hat{A}$, the previous lemma gives us a natural isomorphism \[\Omega_{A/\roi_K}\otimes_{A} F_Y\isoto\Omega_{F_Y/\hat{K}}^\sub{cts},\] and we thus pull back the relative residue map of $F_Y/\hat{K}$ to get \[\RES_Y:\Omega_{\Frac{A}/K}=\Omega_{A/\roi_K}\otimes_{A}\Frac{A}\to k_Y,\] where, as usual, $k_Y$ denote the coefficient/constant field of $F_Y$.

More importantly, if $y$ is instead a height one prime of $A$, then set \[\RRES_y=\sum_{Y|y}\Tr_{k_Y/\hat{K}}\RES_Y:\Omega_{\Frac{A}/K}\to\hat{K}\] where $Y$ ranges over the finitely many height one primes of $\hat{A}$ sitting over $y$.

We need a small lemma. We shall say that a prime of $\hat{A}$ is {\em transcendental} if and only if its contraction to $A$ is zero; such a prime has height one and does not contain $p$. The transcendental primes are artificial in a sense; they have pathological properties (e.g. if $Y$ is transcendental then $\Frac{A}\le\hat{A}_Y$) and do not contain interesting information about $A$.

\begin{lemma}
Let $Y$ be a height one prime of $\hat{A}$. If $Y$ is not transcendental then it is a prime minimal over $y\hat{A}$, where $y=A\cap Y$. On the other hand, if $Y$ is transcendental, then $\RES_Y(\omega)=0$ for all $\omega\in\Omega_{\Frac{A}/K}$.
\end{lemma}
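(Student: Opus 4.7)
The plan is to handle the two assertions in turn, since they have rather different flavours.

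For (i), the input is flatness of the completion map $A\to\hat A$ together with the standard dimension formula for flat ring maps: $\operatorname{ht}(Q)=\operatorname{ht}(P)+\dim(B_Q/PB_Q)$ whenever $A\to B$ is flat and $P=A\cap Q$. Applied with $B=\hat A$, $Q=Y$, $P=y$, this reads $1=\operatorname{ht}(y)+\dim(\hat A_Y/y\hat A_Y)$. In the non-transcendental case $y\neq 0$, so $\operatorname{ht}(y)\ge 1$, which forces $\operatorname{ht}(y)=1$ and $\dim(\hat A_Y/y\hat A_Y)=0$. The latter vanishing is precisely the statement that $Y$ is minimal over $y\hat A$.

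For (ii), the key observation — which the author highlights just above the lemma — is that transcendentality of $Y$ places $\Frac A$ inside $\roi_{F_Y}$. Indeed $A\cap Y=0$ means every nonzero element of $A$ avoids $Y$ and so becomes a unit in the localization $\hat A_Y$; hence $\Frac A$ embeds into $\hat A_Y$, and thence into its completion $\hat{\hat A_Y}=\roi_{F_Y}$. In particular $\pi_K\in A$ is nonzero, so $\pi_K\notin Y$ and $\pi_K$ is a unit in $\hat A_Y$; the residue field $\res F_Y$ therefore has characteristic zero, making $F_Y$ a two-dimensional local field of equal characteristic zero, which places us in the framework of subsection \ref{subsection_equal_characteristic}.

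To finish, I would fix an isomorphism $F_Y\cong k_Y((t))$ identifying $\roi_{F_Y}$ with $k_Y[[t]]$, and write any $\omega\in\Omega_{\Frac A/K}$ as a finite sum of elementary forms $f\,dg$ with $f,g\in\Frac A\subseteq k_Y[[t]]$. By lemma \ref{lemma_differential_forms_of_K[[t]]} the image of $dg$ in $\Omega_{\roi_{F_Y}/\roi_K}^\sub{sep}$ equals $g'(t)\,dt$ with $g'(t)\in k_Y[[t]]$, and so $f\,dg$ lies in $k_Y[[t]]\,dt$, whose $t^{-1}$-coefficient is zero; consequently $\RES_{F_Y}(f\,dg)=0$ by the very definition of the equal-characteristic residue, and summing gives $\RES_Y(\omega)=0$.

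The only genuine obstacle is conceptual — one must spot that transcendentality of $Y$ is exactly the condition that embeds $\Frac A$ into $\roi_{F_Y}$. Once that inclusion is in hand, the dimension formula immediately settles (i) and the definition of the equal-characteristic residue immediately settles (ii).
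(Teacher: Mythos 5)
Your proof is correct, and you correctly identified the one observation (flagged just above the lemma) that drives part (ii): transcendentality of $Y$ embeds $\Frac A$ into $\roi_{F_Y}$. For part (ii), your argument is essentially the paper's, just spelled out rather than delegated (the paper simply refers back to the analogous step in the proof of the complete-ring reciprocity theorem, where the vanishing of $\RES_Y(f\,dg)$ for $p\notin Y$ and $f,g\in \hat A_Y$ is noted). Your unwinding via $dg=g'(t)\,dt\in\roi_{F_Y}\,dt$ is accurate.

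For part (i), your route is genuinely different. The paper uses no flatness at all: since $\hat A$ is a normal local ring (hence a domain) and $y\hat A\neq 0$ is contained in $Y$, any prime $P$ minimal over $y\hat A$ with $P\subseteq Y$ sits in a chain $0\lhd P\unlhd Y$, and $\operatorname{ht}(Y)=1$ forces $P=Y$. Your argument instead invokes the flat-fibre dimension formula $\operatorname{ht}(Y)=\operatorname{ht}(y)+\dim(\hat A_Y/y\hat A_Y)$ for the flat local map $A\to\hat A$. This is heavier machinery but is equally valid (and as a small bonus yields $\operatorname{ht}(y)=1$ directly, though that is not needed here). The paper's argument buys simplicity and transparency; yours buys generality, as the fibre-dimension formula works without needing to know $\hat A$ is a domain — though in the present excellent normal setting one gets the domain property for free, so the extra generality is not exploited.
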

\begin{proof}
Since $y=A\cap Y$ is non-zero by assumption, so is $y\hat{A}$. Since $y\hat{A}$ is contained in $Y$ there is a prime $P\lhd\hat{A}$ which is minimal over $y\hat{A}$ and which is contained in $Y$. But then $P\neq 0$ and we have a chain of primes $0\lhd P\unlhd Y$; since $Y$ has height $1$ in ${\hat{A}}$, we deduce $Y=P$, i.e. $Y$ is minimal over $y\hat{A}$.

If $fdg$ is a element of $\Omega_{\Frac{A}/K}$, then as we remarked above, $f$ and $g$ belong to $\hat{A}_Y$; therefore $\RES_Y(fdg)=0$, just as in the proof of theorem \ref{theorem_general_complete_reciprocity}.
\end{proof}

The reciprocity law for $A$ follows:

\begin{theorem}\label{theorem_incomplete_reciprocity}
For any $\omega\in\Omega_{\Frac A/K}$, the residue $\RRES_y(\omega)$ is non-zero for only finitely many $y\lhd\!^1 A$, and \[\sum_{y\lhd\!^1A}\RRES_y(\omega)=0\] in $\hat{K}$.
\end{theorem}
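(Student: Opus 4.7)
The strategy is to deduce this from the reciprocity law already established for the complete ring $\hat A$ (theorem \ref{theorem_general_complete_reciprocity}). The inclusion $A\hookrightarrow\hat A$ induces an inclusion $\Frac A\hookrightarrow F=\Frac\hat A$ (the latter field exists because excellence makes $\hat A$ normal, hence a domain), and thus any $\omega\in\Omega_{\Frac A/K}$ pushes forward to an element $\omega'\in\Omega_{\hat A/\hat\roi_K}^{\sub{sep}}\otimes_{\hat A}F$, to which the complete-case residue theory applies.

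The first step is to verify that for each non-transcendental height one prime $Y\lhd\hat A$, the local residue $\RES_Y(\omega)$ (as defined in this subsection, via the isomorphism of lemma \ref{lemma_differential_forms_global_to_local}) agrees with the complete-case local residue $\RES_Y(\omega')$. This is essentially functoriality: the map $\Omega_{A/\roi_K}\otimes_A F_Y\to\Omega_{F_Y/\hat K}^{\sub{cts}}$ factors naturally through $\Omega_{\hat A/\hat\roi_K}^{\sub{sep}}\otimes_{\hat A}F_Y$, and both sides of this factorisation are isomorphisms (the first by lemma \ref{lemma_differential_forms_global_to_local}, the second as in the complete case), so the universal property of Kähler differentials forces compatibility of the induced residue maps.

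The second step is to reorganise the sum. Since every $Y\lhd\!^1\hat A$ is either transcendental or a minimal prime over $y\hat A$ for $y=A\cap Y$, and the height one primes of $A$ are exactly the non-zero contractions of such $Y$, we may write
\[\sum_{y\lhd^1 A}\RRES_y(\omega)=\sum_{y\lhd^1 A}\sum_{Y\mid y}\RRES_Y(\omega')=\sum_{\substack{Y\lhd^1\hat A\\Y\mbox{ \scriptsize non-transc.}}}\RRES_Y(\omega').\]
The preceding lemma shows that the transcendental primes contribute zero to the right-hand side of theorem \ref{theorem_general_complete_reciprocity}, so the last sum equals $\sum_{Y\lhd^1\hat A}\RRES_Y(\omega')$, which vanishes by that theorem.

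The finiteness assertion follows in the same vein: if $\RRES_y(\omega)\neq 0$ then there must exist at least one $Y\mid y$ with $\RES_Y(\omega')\neq 0$, and since distinct primes $y$ of $A$ have disjoint fibres in $\hat A$, this associates to each such $y$ a distinct element of the finite set $\{Y\lhd^1\hat A:\RES_Y(\omega')\neq 0\}$ provided by theorem \ref{theorem_general_complete_reciprocity}. I do not anticipate a genuine obstacle here; the only mildly delicate point is the compatibility check in the first step, which is routine once the two isomorphism statements of lemma \ref{lemma_differential_forms_global_to_local} and of remark \ref{remark_completion_of_fin_gen_algebra} are in hand.
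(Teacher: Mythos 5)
Your proposal is correct and follows the same route as the paper's proof, which consists of the single sentence ``Immediate from theorem \ref{theorem_general_complete_reciprocity} and the previous lemma''; you have simply spelled out the bookkeeping that the paper treats as obvious (the compatibility of the two definitions of $\RES_Y$ via the factorisation through $\Omega_{\hat A/\hat\roi_K}^{\sub{sep}}\otimes_{\hat A}F_Y$, the reorganisation of the sum over primes of $A$ into a sum over non-transcendental primes of $\hat A$, and the finiteness transfer). One very small inaccuracy: lemma \ref{lemma_differential_forms_global_to_local} asserts that the \emph{composite} $\Omega_{A/\roi_K}\otimes_A\hat{\hat A_Y}\to\Omega^{\sub{sep}}_{\hat{\hat A_Y}/\hat\roi_K}$ is an isomorphism, not the first factor of your factorisation; but since the second factor is also an isomorphism (by the lemma preceding proposition \ref{proposition_functoriality_in_complete_case}), the first factor is forced to be one, and your conclusion stands.
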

\begin{proof}
Immediate from theorem \ref{theorem_general_complete_reciprocity} and the previous lemma.
\end{proof}

\section{A reciprocity law for arithmetic surfaces}\label{section_reciprocity_for_surfaces}
Let $\roi_K$ be a Dedekind domain of characteristic zero and with finite residue fields; denote by $K$ its field of fractions. Let $X$ be a curve over $\roi_K$; more precisely, $X$ is a normal scheme, flat and projective over $S=\Spec\roi_K$, whose generic fibre is one dimensional and irreducible. These assumptions are enough to imply that each special fibre of $X$ is equidimensional of dimension one. Let $\eta$ be the generic point of $\Spec\roi_K$; closed points will be denoted by $s$, and we set $K_s=\Frac{\hat{\roi}_{K,s}}$, which is a local field of characteristic zero. Let $\Omega_{X/S}$ denote the coherent sheaf of relative differential (one-)forms.

Let $x\in X$ be a closed point, and $y\subset X$ a curve ($=$ one-dimensional, irreducible, closed subscheme) containing $x$; let $s$ be the closed point of $S$ under $x$. Then $A=\roi_{X,x}$ satisfies the conditions at the start of subsection \ref{subsection_reciprocity_for_incomplete_rings}, and contains the discrete valuation ring $\roi_{K,s}$. Also denote by $y\lhd \roi_{X,x}$ the local equation for $y$ at $x$; then $y$ is a height one prime of $A$, and we denote by \[\RRES_{x,y}:\Omega_{K(X)/K}\to K_s\] the residue map $\RRES_y:\Omega_{\Frac A/K}\to K_s$.

The following is the reciprocity law for residues around a fixed point on $X$:

\begin{theorem}\label{theorem_reciprocity_around_a_point}
Let $\omega\in\Omega_{K(X)/K}$, and let $x\in X$ be a closed point sitting over $s\in S$. Then for all but finitely many curves $y\subset X$ containing $x$, the residue $\RRES_{x,y}(\omega)$ is zero, and \[\sum_{\substack{y\subset X\\y\ni x}}\RRES_{x,y}(\omega)=0\] in the local field $K_s$.
\end{theorem}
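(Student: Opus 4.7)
The plan is to reduce the statement directly to Theorem \ref{theorem_incomplete_reciprocity} applied to the local ring $A = \roi_{X,x}$ over the discrete valuation ring $\roi_{K,s}$. Essentially all of the work has already been done; the task is to check that the setup of subsection \ref{subsection_reciprocity_for_incomplete_rings} applies verbatim and that the residue maps match up.

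First I would verify the hypotheses on $A = \roi_{X,x}$. Since $X$ is a normal, two-dimensional, excellent scheme (flat and projective over the one-dimensional base $S$, with one-dimensional generic fibre) and $x$ is a closed point, $A$ is a two-dimensional, normal, local ring with finite residue field of characteristic $p$. Because $X$ is of finite type (indeed projective) over $\roi_K$, the ring $A$ is the localisation of a finitely generated $\roi_K$-algebra. The structure morphism $\pi:X\to S$ with $\pi(x)=s$ provides the local homomorphism $\roi_{K,s}\to\roi_{X,x}$; since $A$ is normal, this embeds $\roi_{K,s}$ as a subring of $A$. Thus $A/\roi_{K,s}$ satisfies the standing assumptions of subsection \ref{subsection_reciprocity_for_incomplete_rings}, with $\hat{K}=\Frac\hat\roi_{K,s}=K_s$.

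Next I would identify the indexing. The curves $y\subset X$ containing $x$ are, by definition, the one-dimensional irreducible closed subschemes of $X$ through $x$; these correspond bijectively to the height one primes of the two-dimensional local ring $\roi_{X,x}$, via $y\mapsto y\roi_{X,x}$. The function field $K(X)$ is $\Frac A$, and the space $\Omega_{K(X)/K}$ agrees with $\Omega_{\Frac A/K}$ appearing in Theorem \ref{theorem_incomplete_reciprocity}. By construction, the residue map $\RRES_{x,y}$ defined in the statement is nothing other than the map $\RRES_y:\Omega_{\Frac A/K}\to\hat K=K_s$ of subsection \ref{subsection_reciprocity_for_incomplete_rings} attached to the height one prime $y\lhd A$.

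With these identifications in place, Theorem \ref{theorem_incomplete_reciprocity} applied to $\omega\in\Omega_{\Frac A/K}$ yields both the finiteness assertion (only finitely many height one primes of $A$ give a nonzero contribution) and the reciprocity identity $\sum_{y\lhd^1 A}\RRES_y(\omega)=0$ in $K_s$, which is precisely the conclusion of the theorem. The only mild point worth flagging is the insistence on normality of $X$: it ensures $A$ itself is normal, so that the chain of localisations and completions used to define the two-dimensional local fields $F_{x,y}$ in the introduction is well-behaved and matches the ring-theoretic construction used to formulate $\RRES_y$. There is no genuine obstacle; the proof is purely a translation between the geometric and ring-theoretic languages.
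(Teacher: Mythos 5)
Your proof is correct and takes essentially the same approach as the paper: the paper itself states the result as a direct translation of Theorem \ref{theorem_incomplete_reciprocity} into geometric language, with the identifications (that $A=\roi_{X,x}$ satisfies the conditions of subsection \ref{subsection_reciprocity_for_incomplete_rings}, that curves through $x$ correspond to height one primes of $A$, and that $\RRES_{x,y}$ is by definition $\RRES_y$) set up in the paragraph preceding the theorem and the proof given as a one-line remark. You have simply spelled out those identifications and hypothesis checks explicitly, which is a reasonable thing to do.
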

\begin{proof}
This is simply the geometric statement of theorem \ref{theorem_incomplete_reciprocity}.
\end{proof}

\begin{remark}\label{remark_reciprocity_along_a_vertical_curve}
There is also a reciprocity law for the residue maps $(\RRES_{x,y})_x$ along any fixed vertical curve $y\subset X$. Let $\omega\in\Omega_{K(X)/K}$ and fix an irreducible component $y$ of a special fibre $X_s$. Then \[\sum_{x\in y}\RRES_{x,y}(\omega)=0\] in the local field $K_s$, where $x$ varies over all closed points of $y$. Remarkably, it is possible for infinitely many terms in the summation to be non-zero, but the sum does converge to zero in the valuation topology on $K_s$. This will be published in a later work focusing on Grothendieck duality\footnote{See \cite{Morrow2010}.}. Compare with \cite[Proposition 6]{Osipov2000}, where the same result for an algebraic variety over a smooth curve is established.
\end{remark}

\section{Explicit construction of the dualizing sheaf for arithmetic surfaces}\label{section_explicit_dualizing_sheaf_of_arith_surface}
Let $\pi:X\to S=\Spec\roi_K$ be an arithmetic surface in the sense of section \ref{section_reciprocity_for_surfaces}. In this section we will apply the theory of residues developed in sections \ref{section_local_relative_residues} and \ref{section_reciprocity_for_rings} to give an explicit description of the dualizing sheaf $\omegaa_\pi$ which is analogous to Yekutieli's result for algebraic varieties described in subsection \ref{subsection_explicit_grothendieck_duality}.

We begin by establishing an essential result in the affine setting, to which we will then reduce the main theorem \ref{main_theorem}.

\subsection{The affine case}
Let $\roi_K$ be a Dedekind domain of characteristic zero with finite residue fields; its field of fractions is $K$. We suppose that we are given a finitely generated, flat $\roi_K$-algebra $A$, which is normal and two-dimensional. Assume further that there is an intermediate ring $B$ \[\roi_K\le B\le A\] such that $B\cong\roi_K[T]$ and such that $A$ is a finitely generated $B$-module. Finally, set $F=\Frac A$, $M=\Frac B$, and note that $\Omega_{F/K}$ is a one-dimensional $F$-vector space, with basis $dT$.

If $0\lhd y\lhd x$ is a chain of primes in $A$, then $A_x$ is a two-dimensional, normal, local ring containing the discrete valuation ring $\roi_{K,s(x)}$, where $s(x)=\roi_K\cap x$. Therefore, as in section \ref{section_reciprocity_for_surfaces}, we have the residue map $\RRES_{x,y}:\Omega_{F/K}\to K_{s(x)}$ where $K_{s(x)}=\Frac\hat{\roi}_{K,s(x)}$. The situation is similar with $B$ in place of $A$.

We begin by establishing a functoriality result which we could have proved in section \ref{section_reciprocity_for_rings}:

\begin{proposition}\label{proposition_incomplete_functoriality}
Let $\omega\in\Omega_{F/K}$, and let $0\lhd y\lhd x\lhd B$ be a chain of primes in $B$. Then \[\RRES_{x,y}\Tr_{F/M}(\omega)=\sum_{x',y'}\RRES_{x',y'}(\omega)\] where the sum is taken over all chains $0\lhd y'\lhd x'\lhd A$ such that $x'$ sits over $x$ and $y'$ sits over $y$.
\end{proposition}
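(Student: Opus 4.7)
The plan is to reduce this statement to the already-proved complete-local functoriality, Proposition~\ref{proposition_functoriality_in_complete_case}, by localising and completing at $x$ on the base side. Since $B\cong\roi_K[T]$ is regular, the ring $\hat{B}_x$ satisfies condition (\dag) of section~\ref{section_reciprocity_for_rings} and is moreover $\hat\roi_{K,s(x)}$-isomorphic to $\hat\roi_{K,s(x)}[[T']]$ for a suitable $T'$. Because $A$ is a finite $B$-module and $A$ is normal and excellent, the flat base change $A\otimes_B\hat B_x$ decomposes as a finite product $\prod_{x'|x}\hat A_{x'}$ over the primes of $A$ lying above $x$, and each factor $\hat A_{x'}$ is a complete local normal two-dimensional domain satisfying (\dag) and finite over $\hat B_x$. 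Writing $F_{x'}=\Frac\hat A_{x'}$, I would use lemma~\ref{lemma_differential_forms_global_to_local} to identify $\omega\in\Omega_{F/K}$ with its image $(\omega_{x'})$ in $\prod_{x'}\Omega_{F_{x'}/K_{s(x)}}^\sub{cts}$, under which the global trace decomposes as $\Tr_{F/M}(\omega)=\sum_{x'|x}\Tr_{F_{x'}/M_x}(\omega_{x'})$ inside $\Omega_{M_x/K_{s(x)}}^\sub{cts}$.

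By the very definition of the residue $\RRES_{x,y}$ of subsection~\ref{subsection_reciprocity_for_incomplete_rings}, we have $\RRES_{x,y}=\sum_{\hat y|y,\,\hat y\lhd^1\hat B_x}\RRES_{\hat y}$. Applying this to $\Tr_{F/M}(\omega)$ and inserting the trace decomposition from the previous paragraph yields
\[
\RRES_{x,y}\Tr_{F/M}(\omega)\;=\;\sum_{x'|x}\sum_{\hat y|y}\RRES_{\hat y}\Tr_{F_{x'}/M_x}(\omega_{x'}).
\]
For each pair $(x',\hat y)$ I then invoke Proposition~\ref{proposition_functoriality_in_complete_case} for the finite extension $\hat A_{x'}/\hat B_x$ at the height-one prime $\hat y$ of $\hat B_x$, which rewrites the summand as $\sum_{\hat Y|\hat y,\,\hat Y\lhd^1\hat A_{x'}}\RRES_{\hat Y}(\omega_{x'})$. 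Collapsing the double sum $\sum_{\hat y|y}\sum_{\hat Y|\hat y}$ into a single sum over those height-one primes $\hat Y$ of $\hat A_{x'}$ with $\hat Y\cap B=y$ (the contraction being height one by incomparability in the finite integral extension $\hat A_{x'}/\hat B_x$), the expression becomes $\sum_{x'|x}\sum_{\hat Y\cap B=y}\RRES_{\hat Y}(\omega_{x'})$.

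To conclude I would identify this last expression with the right-hand side of the claimed identity. For each fixed $x'$, any $\hat Y\lhd^1\hat A_{x'}$ with $\hat Y\cap B=y\neq 0$ is automatically non-transcendental, so by the final lemma of subsection~\ref{subsection_reciprocity_for_incomplete_rings} it is one of the primes minimal over $y'\hat A_{x'}$ for a uniquely determined height-one prime $y'=\hat Y\cap A$ of $A$, which must satisfy $y\subset y'\subset x'$ and $y'\cap B=y$; conversely, the height-one primes of $\hat A_{x'}$ over such a $y'$ give back exactly these $\hat Y$. Grouping the $\hat Y$'s by $y'=\hat Y\cap A$ thus produces $\sum_{x'|x}\sum_{y'}\RRES_{x',y'}(\omega)$, as required. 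The only real obstacle is the prime-theoretic bookkeeping at this last stage, together with the preliminary application of lemma~\ref{lemma_differential_forms_global_to_local} used to justify the trace decomposition at the level of continuous differentials; no new residue-theoretic input beyond Proposition~\ref{proposition_functoriality_in_complete_case} is required.
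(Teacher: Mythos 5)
Your proof is correct and follows the same route as the paper's: decompose $A\otimes_B\hat{B_x}=\bigoplus_{x'|x}\hat{A_{x'}}$, decompose the trace accordingly, apply Proposition~\ref{proposition_functoriality_in_complete_case} to each pair $(x',\hat y)$ with $\hat y$ a height-one prime of $\hat{B_x}$ over $y$, and reindex the resulting sums. The only presentational differences are your slightly more careful bookkeeping of the prime correspondences and your appeal to lemma~\ref{lemma_differential_forms_global_to_local} for the trace decomposition, which is not really needed since $\Tr_{F/M}=\sum_{x'|x}\Tr_{\Frac\hat{A_{x'}}/\Frac\hat{B_x}}$ is just the standard base-change identity the paper quotes directly.
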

\begin{proof}
Let $x$ be a fixed maximal ideal of $B$; then $A\otimes_B\hat{B_x}=\bigoplus_{x'|x}\hat{A_{x'}}$ where $x'$ ranges over the finitely many maximal ideals of $A$ sitting over $x$. Therefore $F\otimes_M\Frac\hat{B_x}=\bigoplus_{x'|x}\Frac\hat{A_{x'}}$ and \[\Tr_{F/M}=\sum_{x'|x}\Tr_{\Frac\hat{A_{x'}}/\Frac\hat{B_x}},\] a local-global identity which is of course very well known for Dedekind domains.

Let $Y$ be a height one prime of $\hat{B_x}$. Then, for $\omega\in\Omega_{F/K}$,
\begin{align*}
\RRES_Y\Tr_{F/M}\omega
	&=\sum_{x'|x}\RRES_Y\Tr_{\Frac\hat{A_{x'}}/\Frac\hat{B_x}}\omega\\
	&\stackrel{(\ast)}{=}\sum_{x'|x}\sum_{Y'|Y}\RRES_{Y'}\omega
\end{align*}
where $Y'$ ranges over the height one primes of $\hat{A_{x'}}$ sitting over $Y$, and equality $(\ast)$ follows from  proposition \ref{proposition_functoriality_in_complete_case}. Now fix a height one prime $y$ of $B$ contained inside $x$; then
\begin{align*}
\RRES_{x,y}\Tr_{F/M}\omega
	&=\sum_{\substack{Y\lhd\hat{B_x}\\Y|y}}\RRES_Y\Tr_{F/M}\omega\\
	&=\sum_{\substack{Y\lhd\hat{B_x}\\Y|y}}\sum_{x'|x}\sum_{Y'|Y}\RRES_{Y'}\omega\\
	&=\sum_{x'|x}\sum_{\substack{y'\lhd A_{x'}\\y'|y}}\sum_{\substack{Y'\lhd\hat{A_{x'}}\\Y'|y'}}\sum_{Y'|Y}\RRES_{Y'}\omega\\
	&=\sum_{x'|x}\sum_{\substack{y'\lhd A_{x'}\\y'|y}}\RRES_{y'}\omega,
\end{align*}
which is the required result.
\end{proof}

We now introduce the following $A$-submodule of $\Omega_{F/K}$ defined in terms of residues: \[W_{A/\roi_K}=\{\omega\in\Omega_{F/K}:\RRES_{x,y}(f\omega)\in\hat{\roi}_{K,s(x)}\mbox{ for all } 0\lhd y\lhd x\lhd A \mbox{ and }f\in A_y\}.\] Similarly define $W_{B/\roi_K}$, and compare with Yekutieli's `holomorphic forms' in subsection \ref{subsection_explicit_grothendieck_duality}.

Suppose that $\omega\in W_{A/\roi_K}$ and $y\lhd x$ is a chain in $A$. We remarked at the end of the proof of theorem \ref{theorem_reciprocity_base_case} that each residue map on a two-dimensional, complete, normal local ring is continuous with respect to the adic topology on the ring and the discrete valuation topology on the local field (this is easy to prove for $\roi_K[[T]]$ and follows in the general case using functoriality). Therefore $\RRES_{x,y}(f\omega)\in\hat{\roi}_{K,s(x)}$ for all $f\in\hat{A_x}$. Another continuity argument even implies that this remains true for $f\in(\hat{A_x})_y$.

Now, $y\hat{A_x}$ is a radical ideal of $\hat{A_x}$; localising and completing with respect to this ideal obtains \[(\hat{(\hat{A_x})_y}=\bigoplus_{Y|y}\hat{(\hat{A_x})_Y}\] where $Y$ ranges over the height one primes of $\hat{A_x}$ sitting over $y$. Each $\roi_{x,Y}:=\hat{(\hat{A_x})_Y}$ is a complete discrete valuation ring whose field of fractions is a two-dimensional local field, which we will denote $F_{x,Y}$. Note that $\RRES_{x,y}=\sum_Y\RRES_{F_{x,Y}}$ by definition. 

Fix a particular height one prime $Y_0$ of $\hat{A_x}$ over $y$. Since $(\hat{A_x})_y$ is dense in $\bigoplus_{Y|y}\roi_{x,Y}$ with respect to the discrete valuation topologies, there is $h\in(\hat{A_x})_y$ which is $Y_0$-adically close to $1$ and $Y$-adically close to $0$ for $Y\neq Y_0$. More precisely, since each residue map $\RRES_{F_{x,Y}}$ is continuous with respect to the discrete valuation topologies on $F_{x,Y}$ and $K_{s(x)}$, we may take $h$ to satisfy
\begin{enumerate}
\item $\RRES_{F_{x,Y}}(h\roi_{x,Y}\omega)\subseteq\hat{\roi}_{K,s(x)}$ for $Y\neq Y_0$;
\item $\RRES_{F_{x,Y_0}}((h-1)\roi_{x,Y_0})\omega)\subseteq\hat{\roi}_{K,s(x)}$.
\end{enumerate}
Replacing $f$ by $hf$, it follows that $\RRES_{F_{x,Y_0}}(f\omega)\in\hat{\roi}_{K,s(x)}$ for all $f\in(\hat{A_x})_y$, and therefore for all $f\in\roi_{x,Y_0}$ by continuity. To summarise:

\begin{lemma}
Let $\omega\in\Omega_{F/K}$; then $\omega\in W_{A/\roi_K}$ if and only if $\RRES_{F_{x,Y}}(f\omega)\in\hat{\roi}_{K,s(x)}$ for all maximal ideals $x\lhd A$, all height one primes $Y\lhd\hat{A_x}$, and all $f\in\roi_{x,Y}$.
\end{lemma}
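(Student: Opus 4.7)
My plan is to prove the two implications separately, with the backwards direction being essentially immediate and the forward direction relying on repeated use of continuity of the individual residue maps together with weak approximation. For the backwards implication, fix a chain $0 \lhd y \lhd x \lhd A$ and $f \in A_y$. The inclusions $A_y \hookrightarrow (\hat{A_x})_y \hookrightarrow \roi_{x,Y}$ valid for each $Y \mid y$ show that $f$ lies in every $\roi_{x,Y}$, so by hypothesis each $\RRES_{F_{x,Y}}(f\omega)$ lies in $\hat{\roi}_{K,s(x)}$. Since $\RRES_{x,y} = \sum_{Y \mid y} \Tr_{k_Y/\hat{K}_{s(x)}} \circ \RES_{F_{x,Y}}$ by definition and $\hat{\roi}_{K,s(x)}$ is closed under traces from $k_Y$, the sum lies in $\hat{\roi}_{K,s(x)}$, which is exactly the defining condition of $W_{A/\roi_K}$.

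For the forward implication, I would proceed in three stages of extension. First, starting from $\omega \in W_{A/\roi_K}$, I would upgrade the integrality condition $\RRES_{x,y}(f\omega) \in \hat{\roi}_{K,s(x)}$ from $f \in A_y$ to $f \in \hat{A_x}$, and then to $f \in (\hat{A_x})_y$. In both steps the key ingredient is that each $\RRES_{F_{x,Y}}$ is continuous from the discrete valuation topology of $F_{x,Y}$ (equivalently, the $Y$-adic topology on $\roi_{x,Y}$) to the $\frak{p}_K$-adic topology on $K_{s(x)}$, combined with density of $A_y$ in $\hat{A_x}$ and of $\hat{A_x}$ in the direct sum $\bigoplus_{Y \mid y} \roi_{x,Y}$ given by localisation-and-completion. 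The closedness of $\hat{\roi}_{K,s(x)}$ then passes the integrality condition to the limit.

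Second, I would use excellence of $A$ to write $\hat{(\hat{A_x})_y} = \bigoplus_{Y \mid y} \roi_{x,Y}$, so that $\RRES_{x,y} = \sum_{Y \mid y} \RRES_{F_{x,Y}}$ (after composition with the traces). Finally, to isolate an individual summand corresponding to a chosen $Y_0 \mid y$, I would apply weak approximation: since $(\hat{A_x})_y$ is dense in the product $\prod_{Y \mid y} \roi_{x,Y}$ in the product of discrete valuation topologies, for any $f \in \roi_{x,Y_0}$ and any $N$ I can choose $h \in (\hat{A_x})_y$ that agrees with $1$ at $Y_0$ and with $0$ at each $Y \neq Y_0$ to such precision that, by continuity, $\RRES_{F_{x,Y}}(hf\omega) \in \hat{\roi}_{K,s(x)}$ for $Y \neq Y_0$ and $\RRES_{F_{x,Y_0}}(hf\omega)$ is congruent to $\RRES_{F_{x,Y_0}}(f\omega)$ modulo arbitrarily large powers of $\frak{p}_K$. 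Since the total sum $\RRES_{x,y}(hf\omega)$ lies in $\hat{\roi}_{K,s(x)}$, taking the limit forces $\RRES_{F_{x,Y_0}}(f\omega) \in \hat{\roi}_{K,s(x)}$ as well.

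The main obstacle is the continuity claim for the residue maps $\RRES_{F_{x,Y}}$ in the required topologies; this was alluded to at the end of the proof of Theorem~\ref{theorem_reciprocity_base_case} and must be verified by unwinding the definition first for standard fields of the form $k \dblecurly{t}$, where it is transparent from the explicit formula for the coefficient of $t^{-1}$, and then extended to arbitrary two-dimensional local fields via functoriality (Proposition~\ref{proposition_functoriality_for_mixed_local_fields}) and the trace on a finite extension. Once this continuity is established, the three extension/approximation steps are routine.
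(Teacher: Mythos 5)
Your proposal is correct and follows essentially the same route as the paper: the backwards direction is the trivial summing over $Y\mid y$, and the forwards direction is the same two continuity upgrades from $A_y$ to $\hat{A_x}$ to $(\hat{A_x})_y$, followed by the decomposition $\hat{(\hat{A_x})_y}=\bigoplus_{Y\mid y}\roi_{x,Y}$ and the weak-approximation argument with an idempotent-like element $h$ that is $Y_0$-adically close to $1$ and close to $0$ at the other $Y$'s. The paper's proof of this lemma is literally the sentence ``The implication $\Leftarrow$ is trivial, and we have just proved $\Rightarrow$,'' referring to exactly the chain of reasoning you have reconstructed in the paragraph preceding the lemma, including the reliance on the unproved-but-plausible continuity of the local residue maps.
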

\begin{proof}
The implication $\Leftarrow$ is trivial, and we have just proved $\Rightarrow$.
\end{proof}

Next we reduce the calculation of $W_{A/\roi_K}$ to that of $W_{B/\roi_K}$:

\begin{lemma}
Let $\omega\in\Omega_{F/K}$; then $\omega\in W_{A/\roi_K}$ if and only if $\Tr_{F/M}(g\omega)\in W_{B/\roi_K}$ for all $g\in A$.
\end{lemma}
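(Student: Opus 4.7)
The plan is as follows. For the forward direction, everything follows from Proposition \ref{proposition_incomplete_functoriality}. I would fix $g \in A$, a chain $y \lhd x$ in $B$, and $f \in B_y$; since $B_y \subseteq M$, the $M$-linearity of the trace gives $f \Tr_{F/M}(g\omega) = \Tr_{F/M}(fg\omega)$, whose residue at $(x,y)$ is then rewritten by Proposition \ref{proposition_incomplete_functoriality} as $\sum_{(x',y')} \RRES_{x',y'}(fg\omega)$, summed over chains in $A$ above $(x,y)$. Because $B \setminus y \subseteq A \setminus y'$ for every such $y'$, one has $fg \in A_{y'}$, and the hypothesis $\omega \in W_{A/\roi_K}$ forces each summand, hence the sum, to lie in $\hat{\roi}_{K,s(x)}$.

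For the converse, I would first reformulate both $W_{A/\roi_K}$ and $W_{B/\roi_K}$ by applying the preceding lemma: membership is equivalent to integrality of $\Tr_{k_Y/\hat{K}_{s(x)}} \RES_Y(f\omega)$ as $(x,Y)$ ranges over pairs of a maximal ideal and a height one prime of its completion, and as $f \in \roi_{x,Y}$. Fix such data $(x'', Y_0, f)$ for $A$, with $Y_0$ non-transcendental (the transcendental case contributes zero residue and is vacuous), and set $x := x'' \cap B$ and $Y' := Y_0 \cap \hat{B_x}$, a height one prime of $\hat{B_x}$. Residue functoriality for the finite extension of two-dimensional local fields $F_{x'', Y_0}/F^B_{x, Y'}$ (Propositions \ref{proposition_functoriality_in_equal_characteristic_case} and \ref{proposition_functoriality_for_mixed_local_fields}) lets us rewrite
\[\Tr_{k_{Y_0}/\hat{K}_{s(x)}} \RES_{Y_0}(f\omega) = \Tr_{k_{Y'}/\hat{K}_{s(x)}} \RES_{Y'}\bigl(\Tr_{F_{x'',Y_0}/F^B_{x,Y'}}(f\omega)\bigr),\]
so it suffices to prove the right hand side lies in $\hat{\roi}_{K,s(x)}$.

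The key algebraic input is the ring isomorphism
\[A \otimes_B \roi^B_{x,Y'} \;\cong\; \bigoplus_{(x''',Y) \text{ over } (x,Y')} \roi_{x''',Y},\]
obtained by combining the decomposition $A \otimes_B \hat{B_x} = \bigoplus_{x'''|x} \hat{A_{x'''}}$ with the identification of each $\hat{A_{x'''}} \otimes_{\hat{B_x}} \roi^B_{x,Y'}$ as the $Y'$-adic completion $\bigoplus_{Y|Y'} \roi_{x''',Y}$. Via this isomorphism I would express the ``delta'' element (equal to $f$ in the $(x'',Y_0)$-component and zero in all others) as a finite sum $\sum_i f'_i \otimes g_i$ with $f'_i \in \roi^B_{x,Y'}$ and $g_i \in A$. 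Decomposing the trace $\Tr_{F/M}$ across primes above $Y'$ gives, for each $i$, $f'_i \Tr_{F/M}(g_i \omega) = \sum_{(x''',Y)} \Tr_{F_{x''',Y}/F^B_{x,Y'}}(f'_i g_i \omega)$; summing over $i$ collapses the right hand side to $\Tr_{F_{x'',Y_0}/F^B_{x,Y'}}(f\omega)$. Applying $\Tr_{k_{Y'}/\hat{K}_{s(x)}} \RES_{Y'}$ to both sides and invoking the reformulated hypothesis for each pair $(f'_i, g_i)$ expresses the desired quantity as a finite sum of integers in $\hat{\roi}_{K,s(x)}$. The main obstacle, and really the heart of the argument, is recognising this tensor-product decomposition: naive weak approximation in $(B \setminus y)^{-1}A$ cannot distinguish different maximal ideals above a single height one prime of $A$, and $A$ alone is not dense in $\bigoplus \roi_{x''',Y}$ for the product of $Y$-adic topologies; tensoring with the base completion $\roi^B_{x,Y'}$ supplies exactly the missing room.
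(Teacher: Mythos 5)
Your proof is correct, but the converse direction takes a genuinely different route from the paper's. The forward implication is the same: both apply Proposition \ref{proposition_incomplete_functoriality} to $\RRES_{x,y}(\Tr_{F/M}(fg\omega))$ and observe that $fg\in A_{y'}$. For the converse, the paper starts from the sum $\sum_{x',y'}\RRES_{x',y'}(g\omega)\in\hat\roi_{K,s(x)}$ (valid for $g\in A_y$) and peels off a single summand by a two-stage topological approximation: first it chooses $h\in A$ that is $\xi$-adically close to $1$ and $x'$-adically close to $0$ to isolate the group of summands over one maximal ideal $\xi$, invoking the continuity of the residue maps to control the error; then it repeats the argument at the level of height-one primes over $y$ to isolate a single $\RRES_{\xi,y'}$. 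Your argument instead front-loads all the approximation work into the preceding lemma (which replaces the integrality condition over $A_y$ by one over the completed local rings $\roi_{x,Y}$), and then isolates the one summand purely algebraically: you realise the ``delta element'' $(f,0,\dots,0)$ as an honest element of the tensor product $A\otimes_B\roi^B_{x,Y'}\cong\bigoplus_{(x''',Y)}\roi_{x''',Y}$, write it as a finite sum $\sum_i g_i\otimes f'_i$, push the trace through the decomposition $\Tr_{F/M}=\sum_{(x''',Y)}\Tr_{F_{x''',Y}/F^B_{x,Y'}}$, and apply the reformulated hypothesis to each pair $(f'_i,g_i)$. This one-shot scheme avoids the iterated continuity estimates in the body of the proof; the trade-off is that it leans more explicitly on the preceding lemma (which is where the density arguments live) and on the structure theory of the tensor product $A\otimes_B\roi^B_{x,Y'}$. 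Both arguments ultimately rest on the same facts (excellence of $A$, the trace decomposition, and continuity of residues), but your packaging into an exact algebraic decomposition is cleaner and easier to check; the paper's iterative method is closer to the classical weak approximation pattern. (As a minor aside, the first sentence of the paper's proof labels the easy direction $\Leftarrow$, but you have correctly identified the easy direction as $\Rightarrow$ --- the published text has the arrows swapped in that first sentence.)
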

\begin{proof}
The implication $\Leftarrow$ follows from proposition \ref{proposition_incomplete_functoriality}. Let us fix a chain $y\lhd x$ in $B$ and suppose that $\RRES_{x,y}(f\Tr_{F/M}(g\omega))\in\hat{\roi}_{K,s(x)}$ for all $g\in A$, $f\in B_y$; so \begin{equation}\sum_{x',y'}\RRES_{x',y'}(g\omega)\in\hat{\roi}_{K,s(x)}\label{7}\end{equation} for all $g\in A_y$ by proposition \ref{proposition_incomplete_functoriality}. Since we have $\hat{A_x}=\oplus_{x'|x}\hat{A_{x'}}$, it follows that if $\xi$ is a fixed maximal ideal of $A$ over $x$, then there is $h\in A$ which is close to $1$ $\xi$-adically and close to $0$ $x'$-adically for any other maximal ideal $x'\neq\xi$ over $x$. More precisely, as we remarked at the end of the proof of theorem \ref{theorem_reciprocity_base_case}, each residue map on a two-dimensional, complete, normal local ring is continuous with respect to the adic topology on the ring and the discrete valuation topology on the local field (this is easy to prove for $\roi_K[[T]]$ and follows in the general case using functoriality); we may find $h$ such that
\begin{enumerate}
\item $\RRES_{x',y'}(hA_{x'}\omega)\subseteq\hat{\roi}_{K,s(x)}$ for $x'\neq\xi$ and $y'\lhd x'$ over $y$;
\item $\RRES_{\xi,y'}((h-1)A_{\xi}\omega)\subseteq\hat{\roi}_{K,s(x)}$ for $y'\lhd \xi$ over $y$.
\end{enumerate}
Replacing $g$ by $gh$ in (\ref{7}) obtains \[\sum_{\substack{y'\lhd\xi\\y'|y}}\RRES_{\xi,y'}(g\omega)\in\hat{\roi}_{K,s(x)}\] for all $g\in A$. This sum is equal to \[\sum_{\substack{y'\lhd\!^1\hat{A_\xi}\\y'|y}}\RRES_{y'}(g\omega),\] and we may now repeat the argument, similarly to how we proved the previous lemma, by completing at $y$ instead of $x$, and using the fact that the residue map on a two-dimensional local field is continuous with respect to the valuation topology. This gives $\RRES_{\xi,y'}(g\omega)\in\hat{\roi}_{K,s(x)}$ for all $g\in A_{y'}$, for any $y'\lhd\xi$ over $y$. This completes the proof.
\end{proof}

We may now establish our main result in the affine case, relating $W_{A/\roi_K}$ to the codifferent of $A/B$. A review of the codifferent and its importance in duality theory is provided in the appendix. The proof requires explicit arguments using residues, and uses the results and notation of sections \ref{section_local_relative_residues} and \ref{section_reciprocity_for_rings}.

\begin{theorem}\label{main_theorem_affine_case}
We have $W_{A/\roi_K}=\frak{C}(A/B)dT$.
\end{theorem}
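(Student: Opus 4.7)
The plan is to reduce the theorem to the identity $W_{B/\roi_K}=B\,dT$ for the base ring $B=\roi_K[T]$, and then prove that identity directly. By the preceding lemma, $\omega=h\,dT\in W_{A/\roi_K}$ iff $\Tr_{F/M}(g\omega)\in W_{B/\roi_K}$ for every $g\in A$; since $\Omega^{\sub{cts}}_{F/K}\cong\Omega^{\sub{cts}}_{M/K}\otimes_MF$ gives $\Tr_{F/M}(gh\,dT)=\Tr_{F/M}(gh)\,dT$, granting the base-ring identity the condition becomes $\Tr_{F/M}(hA)\subseteq B$, i.e.\ $h\in\frak C(A/B)$, which is the theorem.

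The inclusion $B\,dT\subseteq W_{B/\roi_K}$ is immediate from the final corollary of section \ref{section_local_relative_residues}: for $h\in B$ and $f\in B_y$, both lie in $\roi_{x,Y}$ for every chain $y\lhd x$ in $B$ and prime $Y\mid y$ of $\hat B_x$, so $fh\,dT\in\Omega^{\sub{sep}}_{\roi_{F_{x,Y}}/\hat\roi_{K,s(x)}}$; the compatibility diagram forces $\Tr_{k_Y/\hat K_{s(x)}}\RES_{F_{x,Y}}(fh\,dT)\in\hat\roi_{K,s(x)}$, and summing over $Y\mid y$ yields $\RRES_{x,y}(fh\,dT)\in\hat\roi_{K,s(x)}$.

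For the reverse inclusion, take $h\in K(T)$ with $h\,dT\in W_{B/\roi_K}$; as $\roi_K[T]$ is the intersection of its localisations at height one primes, it suffices to show $v_y(h)\ge 0$ for every such $y$. Assume instead $v_y(h)=-n<0$ and argue by cases. \emph{Vertical} case $y=\pi_KB$: take $x=(\pi_K,T)$, whence $\hat B_x=\hat\roi_{K,s}[[T]]$, the unique height one prime over $y$ is $Y=\pi_K\hat B_x$, and $F_{x,Y}\cong\hat K_s\dblecurly T$ with $k_Y=\hat K_s$. Expanding $h=\sum_i c_iT^i$ in $\hat K_s\dblecurly T$ and picking $j$ with $v_{K_s}(c_j)=-n$, the element $f:=T^{-1-j}$ lies in $B_y$ (since $T\notin\pi_KB$, every integer power of $T$ does), and a direct calculation gives $\RRES_{x,y}(fh\,dT)=-c_j\notin\hat\roi_{K,s}$, contradicting $h\,dT\in W_{B/\roi_K}$. \emph{Horizontal} case $y=\phi B$: pick any $\pi_K$ with $\phi\not\equiv 0\bmod\pi_K$, a maximal $x=(\pi_K,g)$ over $y$ where $g$ is an irreducible factor of $\bar\phi$ in $k(s)[T]$, and a prime $Y_0$ of $\hat B_x$ above $y$. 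Since $y\cap\roi_K=0$, $\pi_K\in B_y^\times$, so the integrality hypothesis applied to $\pi_K^{-m}f$ for all $m\ge 0$ and $f\in B_y$ forces $\RRES_{x,y}(fh\,dT)=0$ identically. I refute this vanishing by using the density of $B_y$ in $\bigoplus_{Y\mid y}\roi_{x,Y}$ (arising from $x$-adic density of $B$ in $\hat B_x$, hence $y$-adic density since $y\subseteq x$) to isolate the $Y_0$-contribution, together with a shift inside $F_{x,Y_0}$ by a suitable element of $B_y$ relative to a local parameter, extracting a non-zero expansion coefficient of $h$.

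The main obstacle is the horizontal case: the polynomial $\phi$ may factor in $\hat B_x$ into several primes $Y\mid y$ whose contributions to $\RRES_{x,y}$ could in principle cancel, so the density/approximation argument to separate them, combined with a careful choice of shift element lying in $B_y$ itself (and not merely the larger completion $\roi_{x,Y_0}$), is the delicate step. The vertical case is by comparison routine, the crucial observation being that $T\notin\pi_KB$ so every integer power of $T$ already lies in $B_y$.
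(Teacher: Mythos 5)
Your reduction to the statement $W_{B/\roi_K}=B\,dT$, via the lemma that precedes the theorem in the paper and the compatibility of $\Tr_{F/M}$ with $dT$, is exactly what the paper does, and your argument for $B\,dT\subseteq W_{B/\roi_K}$ is fine. But the reverse inclusion is where the two arguments diverge, and your version has a genuine gap. You attempt to detect a pole of $h$ directly at a height-one prime $y\lhd B$, fixing a convenient maximal ideal $x\supseteq y$ and trying to separate the contributions of the several primes $Y\mid y$ of $\hat{B_x}$ by approximating with elements of $B_y$. The density you invoke is false. First, the implication you assert (``$x$-adic density of $B$ in $\hat B_x$, hence $y$-adic density since $y\subseteq x$'') runs backwards: since $y^n\subseteq\frak m_x^n$, the $y$-adic topology on $\hat{B_x}$ is \emph{finer} than the $\frak m_x$-adic one, so $\frak m_x$-adic density does not yield $y$-adic density. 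Second, $B_y$ genuinely fails to be dense in $\bigoplus_{Y\mid y}\roi_{x,Y}$ for the discrete valuation topologies: every $b\in B_y$ satisfies $\nu_{Y_1}(b)=\dots=\nu_{Y_r}(b)=v_y(b)$, so one cannot approximate, say, $(t_{Y_1},1,\dots,1)$ when $r\ge 2$; and even for $r=1$ the residue field of $B_y$ is $k(y)$, which maps into the larger local field $k(Y)$ (e.g. $B=\bb Z[T]$, $y=(T^2+1)$, $x=(5,T-2)$, where $k(y)=\bb Q(i)\subsetneq\bb Q_5=k(Y)$), so the residue map $B_y\to\roi_{x,Y}/\frak p_Y$ is not onto and density fails. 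You acknowledge that isolating the $Y_0$-contribution is ``the delicate step,'' but you never actually carry it out, and the tool you propose to carry it out with does not work.

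The paper sidesteps this entirely by never trying to go prime-by-prime over $B$. It fixes a maximal ideal $x\lhd B$, writes $C=\hat{B_x}$, and works with the height-one primes of $C$ itself. For a prime $y\lhd^1 C$ not containing $\pi$, non-degeneracy of $\Tr_{k_y/\hat K_s}$ turns the $\RRES_y$-integrality condition into $\RES_y(f\omega)=0$ for all $f\in\hat{C_y}$, and then the identification $\Omega_{B/\roi_K}\otimes_B\hat{C_y}\cong\Omega^{\sub{sep}}_{\hat{C_y}/\hat\roi_{K,s}}$ shows $dT/dt$ is a unit, so the condition says precisely $h\in\hat{C_y}\cap N=C_y$. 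For the unique prime $\pi C$, the field $F_{\pi C}$ is standard with unramified constant field, and the same conclusion $h\in C_{\pi C}$ falls out. Normality of $C=\hat{B_x}$ (which uses excellence of $B$) gives $h\in\bigcap_{y\lhd^1 C}C_y=C$, hence $h\in C\cap M=B_x$, and intersecting over all $x$ gives $h\in B$. The density statement the paper \emph{does} need is for $(\hat{A_x})_y$, a semi-local Dedekind ring, inside its completion $\bigoplus_{Y\mid y}\roi_{x,Y}$; that is standard and true, unlike the density of $B_y$. If you want to keep your prime-by-prime strategy you would have to produce a different argument to isolate the $Y_0$-residue using only $f\in B_y$, which is exactly the hard part that your proposal leaves unaddressed.
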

\begin{proof}
Since $\Omega_{F/K}=F\,dT$ it is enough, by the previous lemma, to prove that $W_{B/\roi_K}=B\,dT$. Let $\omega=h\,dT\in\Omega_{M/K}$, where $h\in M$; we wish to prove $h\in B$. The following arguments only require that $B$ is smooth over $\roi_K$. Fix a maximal ideal $x\lhd B$ and write $s=s(x)$, $C=\hat{B_x}$, $N=\Frac C$ for simplicity; let $\pi\in\roi_{K,s}$ be a uniformiser at $s$.

If $y$ is a height one prime of $C$ which does not contain $\pi$, then $\pi^{-1}\in C_y$ and so \[\RRES_y(f\omega)\in\hat{\roi}_{K,s}\mbox{ for all }f\in \hat{C_y}\Longleftrightarrow\RRES_y(f\omega)=0\mbox{ for all }f\in\hat{C_y}.\] Note that in the notation earlier in this section, $\hat{C_y}=\roi_{x,y}$. Furthermore, non-degeneracy of the trace map from the coefficient field $k_y$ to $K_s$ implies \[\RRES_y(f\omega)=0\mbox{ for all }f\in\hat{C_y}\Longleftrightarrow\RES_y(f\omega)=0\mbox{ for all }f\in \hat{C_y}.\] Let $t\in C_y$ be a uniformiser at $y$; then $\omega=h\frac{dT}{dt}\,dt$ and it easily follows from the definition of the residue map on the equi-characteristic two-dimensional field $N_{y\hat{C}}=M_{x,y}\cong k_y((t))$ that \[\RES_y(f\omega)=0\mbox{ for all }f\in\hat{C_y}\Longleftrightarrow h\frac{dT}{dt}\in \hat{C_y}.\]  Finally, we have identifications \[\hat{C_y}\,dT=\Omega_{B/\roi_K}\otimes_B\hat{C_y}\cong\Omega_{\hat{C_y}/\hat{\roi}_{K,s}}^\sub{sep}=\hat{C_y}\,dt,\] with the isomorphism coming from lemma \ref{lemma_differential_forms_global_to_local}, and $dT$ corresponding to $\frac{dT}{dt}\,dt$. Hence $\frac{dT}{dt}$ is a unit in $\hat{C_y}$, and so \[\RES_y(f\omega)=0\mbox{ for all }f\in\hat{C_y}\Longleftrightarrow h\in C_y.\]

Now we consider the prime(s) containing $\pi$. The special fibre $B/\pi B$ is smooth, and so $C/\pi C$ is a complete, regular, one-dimensional local ring, i.e. a complete discrete valuation ring, and $\pi C$ is prime in $C$. Therefore $\pi C$ is the only height one prime of $C$ which contains $\pi$. Furthermore, $\pi$ is a uniformiser in the two-dimensional local field $N_{\pi A}=M_{x,\pi A}$, and therefore by corollary \ref{corollary_structure_of_standard_fields} there is an isomorphism $F_{\pi C}\cong k_{\pi C}\dblecurly{t}$, and moreover $k_{\pi C}$ is an unramified extension of $K_s$. It easily follows from the definition of the residue map in this case that \[\RES_{\pi C}(f\omega)\in\roi_{k_{\pi\hat{C}}}\mbox{ for all }f\in\hat{C_{\pi C}}\Longleftrightarrow h\in C_{\pi C}.\] The fact that the extension $k_{\pi C}/K$ of local fields is unramified now implies \[\RRES_{\pi C}(f\omega)\in\roi_{k_{\pi\hat{C}}}\mbox{ for all }f\in\hat{C_{\pi C}}\Longleftrightarrow h\in C_{\pi C}.\]

We conclude that \[\RRES_{M_{x,y}}(f\omega)\in\hat{\roi}_{K,s}\mbox{ for all }y\lhd\!^1\hat{B_x}\mbox{ and }f\in\roi_{x,y}\Longleftrightarrow h\in (\hat{B_x})_y \mbox{ for all }y\lhd\!^1\hat{B_x}.\] But $\hat{B_x}$ is normal, so $\bigcap_{y\lhd\!^1\hat{B_x}}(\hat{B_x})_y=\hat{B_x}$. We deduce that $\omega$ belongs to $W_{B/\roi_K}$ if and only if $h\in B_x$ for all $x$, which holds if and only if $h\in B$. This completes the proof.
\end{proof}

\subsection{The main result}
With the required calculation in the affine case completed, we now turn to the proof of the main theorem of this section. Let $\pi:X\to S=\Spec\roi_K$ be an arithmetic surface in the sense of section \ref{section_reciprocity_for_surfaces}. The following result is surely known, but the author could not find any reference to it:

\begin{lemma}\label{lemma_normalization_result}
Each point of $S$ has an affine open neighbourhood $U$ for which there exists a finite morphism of $S$-schemes $\rho:\pi^{-1}(U)\to\bb{P}_U^1$.
\end{lemma}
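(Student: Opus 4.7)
My plan is to reduce to a graded Noether normalisation problem for the homogeneous coordinate ring of $\pi^{-1}(U)$ over the residue field at the chosen point $s$, and then to spread the resulting fibre-finite morphism out to a Zariski open neighbourhood of $s$ in $S$.

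First I fix $s \in S$ and shrink $S$ to an affine open $U_0 = \Spec R$ containing $s$, small enough that the projectivity of $\pi$ yields a closed $U_0$-immersion $\pi^{-1}(U_0) \hookrightarrow \bb{P}_{U_0}^N$ for some $N$. Let $A = \bigoplus_{d \geq 0} A_d$ denote the associated (saturated) homogeneous coordinate ring, a finitely generated graded $R$-algebra generated in degree one. Its reduction $A \otimes_R k(s)$ is the homogeneous coordinate ring of the one-dimensional projective $k(s)$-scheme $X_s := \pi^{-1}(s)$, hence a graded $k(s)$-algebra of Krull dimension $2$.

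Over $k(s)$, I apply graded Noether normalisation to produce two homogeneous elements $\bar x_0, \bar x_1 \in (A \otimes_R k(s))_d$ of some common degree $d \geq 1$ such that $A \otimes_R k(s)$ is a finite module over $k(s)[\bar x_0, \bar x_1]$; equivalently, the resulting morphism $X_s \to \bb{P}_{k(s)}^1$ is finite. Lifting $\bar x_0, \bar x_1$ through the surjection $A_d \twoheadrightarrow (A \otimes_R k(s))_d$ gives $x_0, x_1 \in A_d$, and these define a rational $U_0$-map $\rho : \pi^{-1}(U_0) \dashrightarrow \bb{P}_{U_0}^1$ whose base locus satisfies
\[
V_+(x_0, x_1) \cap X_s = V_+(\bar x_0, \bar x_1) = \emptyset.
\]

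Let $Z$ denote the (closed, by upper semicontinuity of fibre dimension) locus in the domain of $\rho$ where $\rho$ fails to be quasi-finite, and let $W$ be the union of $V_+(x_0, x_1)$ with the closure of $Z$ in $\pi^{-1}(U_0)$. Then $W$ is closed in $\pi^{-1}(U_0)$ and disjoint from $X_s$: it misses $V_+(x_0, x_1) \cap X_s = \emptyset$ trivially, and $Z \cap X_s = \emptyset$ because $\rho|_{X_s}$ is finite by construction. Properness of $\pi$ then gives that $\pi(W)$ is closed in $U_0$ and does not contain $s$. Shrinking $U_0$ to an affine open $U$ with $s \in U \subset U_0 \setminus \pi(W)$ makes $\rho|_{\pi^{-1}(U)}$ a well-defined $U$-morphism to $\bb{P}_U^1$ which is everywhere quasi-finite; being moreover proper (as a morphism between proper $U$-schemes), it is finite, as required.

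The principal technical difficulty is the graded Noether normalisation step over $k(s)$, which may be a finite field (since $\roi_K$ has finite residue fields at its closed points). The classical argument via generic linear combinations of degree-one generators fails in this setting; the remedy, and where David E. Speyer's prime-avoidance idea credited in the acknowledgements enters, is to work in a sufficiently large degree $d$ and invoke graded prime avoidance in $(A \otimes_R k(s))_d$ — applied to the irrelevant ideal against the finitely many homogeneous height-one primes of $A \otimes_R k(s)$ cutting out the components of any initial hyperplane section — to select the required system of parameters $\bar x_0, \bar x_1$.
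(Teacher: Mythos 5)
Your proposal is correct and follows essentially the same strategy as the paper: reduce to constructing a finite map $X_s \to \bb{P}^1_{k(s)}$ on the special fibre by choosing a degree-$d$ pencil via homogeneous prime avoidance (which is exactly the Speyer idea the paper credits), lift the forms, observe the base locus misses $X_s$, and then spread out using openness of the quasi-finite locus together with properness of $\pi$ to shrink $U$. The paper simply carries out the prime avoidance in two explicit steps rather than packaging it as graded Noether normalisation, and cites Grothendieck's form of Zariski's main theorem (EGA III, 4.4.1--4.4.2) where you invoke semicontinuity and the quasi-finite-plus-proper-implies-finite criterion, but these are the same tools.
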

\begin{proof}
Let $s\in S$ be a point of the base, and write \[X^s:=X\times_S\Spec\roi_{S,s}=\operatorname{Proj}\roi_{S,s}[T_0,\dots,T_r]/J\] for some homogeneous ideal $J\lhd \roi_{S,s}[T_0,\dots,T_r]$. Therefore the fibre over $s$ is the projective variety $X_s$ over the field $k:=k(s)$ given by $X_s=\operatorname{Proj}k[T_0,\dots,T_r]/\res{J}$, where $\res{J}$ is the image of $J$ in $k[T_0,\dots,T_r]$. Let $\frak{p}_1,\dots,\frak{p}_m$ be the homogeneous prime ideals of $k[T_0,\dots,T_r]$ which are minimal over $\res{J}$, so that $\{V(\frak{p}_i)\}_i$ are the irreducible components of $X_s$. By the homogeneous prime avoidance lemma \cite[Lemma 3.3]{Eisenbud1995}, there is a homogeneous polynomial $g_0\in \langle T_0,\dots,T_r\rangle$ which is not contained in any of $\frak{p}_1,\dots,\frak{p}_m$.

Now, if $\frak{m}$ is a homogeneous prime ideal of $k[T_0,\dots,T_r]$ containing $J$ and $g_0$, then it strictly contains $\frak{p}_i$ for some $i$. Therefore $V(\frak{m})$ is strictly contained inside the irreducible curve $V(\frak{p}_i)$, whence $V(\frak{m})$ is a closed point of $X_s$. Another application of prime avoidance provides us with a homogeneous polynomial $g_1\in\langle T_0,\dots,T_r\rangle$ which is not contained inside any of the minimal homogeneous primes over $J$ and $g_0$; in other words, $V(g_1)$ does not contain any of the irreducible components of $V(g_0)\cap X_s$. Repeating the dimension argument again we see that there are no homogeneous prime ideals of $k[T_0,\dots,T_r]$ containing all of $J$, $g_0$, and $g_1$ (except for $\langle T_0,\dots,T_r\rangle$, of course); in other words, $V(g_1)\cap V(g_0)\cap X_s=\emptyset$. After replacing $g_0,g_1$ with $g_0^{\deg g_1},g_1^{\deg g_0}$ respectively, we may also assume that $\deg g_0=\deg g_1$.

The homomorphism of graded rings \[k[T_0,T_1]\to k[T_0,\dots,T_r]/\res{J},\quad T_i\mapsto g_i\quad (i=0,1)\] is injective because otherwise there would be a non-trivial relation between $g_0$ and $g_1$ of the form $h(g_0,g_1)\in \res{J}$, for some non-zero polynomial $h$; this would contradict the choice of $g_0,g_1$. Therefore there is an induced surjection \[\phi:X_s\setminus V\to \bb{P}_k^1,\] where $V=V(g_1)\cap V(g_0)\cap X_s$. But by choice of $g_0,g_1$ we know that $V=\emptyset$. Moreover, $\phi$ has finite fibres by dimension considerations, and hence, being projective, it is a finite morphism (by \cite[Proposition 4.4.2]{EGA_III_I}).

More importantly, we may also consider \[\roi_{S,s}[T_0,T_1]\to \roi_{S,s}[T_0,\dots,T_r]/J,\quad T_i\mapsto \tilde{g}_i\quad (i=0,1),\] where $\tilde{g}_i$ are lifts of $g_i$ to $\roi_{S,s}[T_0,\dots,T_r]$ (we chose the lifts to be homogeneous and of the same degree). This induces $\tilde{\phi}:X^s\setminus\tilde{V}\to\bb{P}_{\roi_{S,s}}^1$, where $\tilde{V}$ is the closed subscheme of $X^s$ defined by $\tilde{g}_0$ and $\tilde{g}_1$. We claim that $\tilde{V}=\emptyset$. Indeed, $\pi(\tilde{V})$ is a closed subset of $\Spec{\roi_{S,s}}$ which does not meet the closed point since $\tilde{V}\cap X_s=V=\emptyset$; this is only possible if $\pi(\tilde{V})$ is empty. Therefore $\tilde{\phi}$ is everywhere defined, giving a projective morphism $\tilde{\phi}:X^s\to\bb{P}_{\roi_{S,s}}$ whose restriction to the fibre $X_s$ is exactly $\phi$.

Next, extend $\tilde{\phi}$ to a morphism $\rho:\pi^{-1}(V)\to\bb{P}_V^1$, where $V\subseteq S$ is an open neighbourhood of $s$. According to A.~Grothendieck's interpretation \cite[Proposition 4.4.1]{EGA_III_I} of Zariski's main theorem, the set \[\pi^{-1}(V)_\rho:=\{x\in \pi^{-1}(V):x\mbox{ is an isolated point of the fibre } \rho^{-1}(\rho(x))\}\] is open in $\pi^{-1}(V)$; moreover, it contains all of $X_s$. Since $\pi$ is a closed mapping, it follows now that \[U:=V\setminus \pi(\pi^{-1}(V)\setminus \pi^{-1}(V)_\rho)\] is an open neighbourhood of $s$, contained inside $V$. By construction $\pi^{-1}(U)\subseteq \pi^{-1}(V)_\rho$, and so the restriction of $\rho$ to $\pi^{-1}(U)$ gives a projective morphism to $\bb{P}_U^1$ with everywhere finite fibres; hence $\rho|_{\pi^{-1}(U)}:\pi^{-1}(U)\to\bb{P}_U^1$ is a finite morphism by \cite[Proposition 4.4.2]{EGA_III_I}. We are free to replace $U$ by a smaller affine neighbourhood of $s$ if we wish, and this completes the proof.
\end{proof}

Given a morphism $f$ between suitable schemes, we will write $\omegaa_f$ for its dualizing sheaf (see subsection \ref{subsection_explicit_grothendieck_duality} for a reminder). Let $x\in X$, and pick an open affine neighbourhood $U=\Spec R\subseteq S$ of $\pi(s)$ and a finite morphism $\rho:\pi^{-1}(U)\to\bb{P}_R^1$, as provided by the previous lemma. Then $\bb{P}_R^1$ is covered by two open sets, each isomorphic to $\bb{A}_R^1$, and one of these open sets contains $\rho(x)$. Taking the preimage of this affine open set under $\rho$ gives us as affine open neighbourhood $\Spec A$ of $x$ and a finite injection $R[T]\into A$. We claim:

\begin{lemma}\label{lemma_explicit_formula_using_conductor}
$\omegaa_\pi|_{\Spec A}$, the restriction of the dualizing sheaf of $\pi$ to the open neighbourhood $\Spec A$ of $x$, may be naturally identified with the $A$-module \[\frak{C}(A/R[T])dT\subseteq\Omega_{A/R}.\]
\end{lemma}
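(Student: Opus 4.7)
The plan is to realise $\pi|_{\pi^{-1}(U)}$ as the composition of a finite morphism with a smooth projective one and apply the standard transitivity of dualizing sheaves. Shrinking $U$ as in the preceding lemma, we have a finite morphism $\tilde{\rho}: \pi^{-1}(U) \to \bb{P}_R^1$ obtained by extending $\rho$ across the complementary affine patch (the proof of lemma \ref{lemma_normalization_result} actually produces this extension). Thus $\pi|_{\pi^{-1}(U)}$ factors as
\[\pi^{-1}(U)\;\stackrel{\tilde{\rho}}{\To}\;\bb{P}_R^1\;\stackrel{q}{\To}\;\Spec R,\]
where $q$ is smooth projective of relative dimension one and $\tilde{\rho}$ is finite. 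Since the dualizing sheaf is compatible with restriction to an open subscheme of the source, and since both $\tilde\rho$ and $q$ are proper, the transitivity formula gives
\[\omegaa_\pi|_{\pi^{-1}(U)} \;\cong\; \omegaa_{\tilde{\rho}}\otimes_{\roi_{\pi^{-1}(U)}}\tilde{\rho}^{\ast}\omegaa_q.\]

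Next I would identify each factor on the affine open $\Spec A$, which lies inside $\tilde{\rho}^{-1}(\bb{A}_R^1)$. Smoothness of $q$ of relative dimension one gives $\omegaa_q = \Omega_{\bb{P}_R^1/R}^1$, whose restriction to $\bb{A}_R^1=\Spec R[T]$ is the free module $R[T]\,dT$. For the finite morphism $\tilde{\rho}$, the standard description of the dualizing sheaf for finite morphisms yields
\[\omegaa_{\tilde{\rho}}(\Spec A) = \Hom_{R[T]}(A,R[T]),\]
regarded as an $A$-module in the usual way. Combining these,
\[\omegaa_\pi(\Spec A) \;\cong\; \Hom_{R[T]}(A,R[T])\otimes_{R[T]}R[T]\,dT \;=\; \Hom_{R[T]}(A,R[T])\cdot dT.\]

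It remains to identify $\Hom_{R[T]}(A,R[T])$ with the codifferent $\frak{C}(A/R[T])$ sitting inside $F=\Frac A$. Since $F/M$ (with $M=\Frac R[T]$) is a finite separable extension, the trace pairing is non-degenerate, so the map
\[\frak{C}(A/R[T])\To\Hom_{R[T]}(A,R[T]),\quad x\mapsto(a\mapsto\Tr_{F/M}(xa)),\]
is well-defined (by definition of $\frak{C}$), $A$-linear, injective (by non-degeneracy of $\Tr_{F/M}$), and surjective (any $R[T]$-linear $\phi:A\to R[T]$ extends $M$-linearly to $F\to M$ and is therefore of the form $a\mapsto\Tr_{F/M}(xa)$ for a unique $x\in F$, which then automatically lies in $\frak{C}(A/R[T])$ because $\phi$ takes values in $R[T]$). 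This identification is reviewed in the appendix and is exactly the link between the dualizing and codifferent viewpoints. Putting everything together yields the claimed equality $\omegaa_\pi|_{\Spec A} = \frak{C}(A/R[T])\,dT$ inside $\Omega_{F/K}=F\,dT$.

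The main thing to be careful about is ensuring that the transitivity/base change formula for dualizing sheaves is being applied in the correct setting (in particular, that we are entitled to compute $\omegaa_\pi$ on $\pi^{-1}(U)$ via the proper morphisms $\tilde{\rho}$ and $q$ rather than via $\pi$ itself, and that the finite-morphism description of $\omegaa_{\tilde{\rho}}$ is compatible with restriction to the affine open $\Spec A\subseteq\tilde{\rho}^{-1}(\bb{A}_R^1)$). Once these formal compatibilities are in hand, the identification with $\frak{C}(A/R[T])\,dT$ is immediate from the trace isomorphism, and no further computation with residues is required here—the explicit residue-theoretic content has already been packaged into theorem \ref{main_theorem_affine_case}, which will be used to bridge from this lemma to the main theorem \ref{main_theorem}.
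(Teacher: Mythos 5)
Your proposal is correct and follows essentially the same route as the paper's proof: factor $\pi$ through the finite morphism $\rho$ to $\bb{P}_R^1$, apply the transitivity/base-change formula for dualizing sheaves (Liu, Proposition 6.4.26) using that $\Omega_{\bb{P}_R^1/R}$ is locally free and equals the dualizing sheaf of $\bb{P}_R^1/R$, restrict to $\Spec A$, and identify the dualizing module of the finite map $\Spec A\to\Spec R[T]$ with the codifferent via the trace pairing. The only cosmetic difference is that the paper cites its appendix for the identification $\Hom_{R[T]}(A,R[T])\cong\frak{C}(A/R[T])$ rather than spelling it out, and your remark about "extending $\rho$ across the complementary affine patch" is unnecessary since lemma \ref{lemma_normalization_result} already produces a finite morphism to $\bb{P}_U^1$.
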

\begin{proof}
This is an exercise in the functoriality properties of dualizing sheaves. Firstly, with $U=\Spec R$ as above, the restriction of $\omegaa_\pi$ to $\pi^{-1}(U)$ is the dualizing sheaf of the restriction of $\pi$ to $\pi^{-1}(U)$; or in symbols, \[(\omegaa_\pi)|_{\pi^{-1}(U)}=\omegaa_{\pi'}\] with $\pi'=\pi|_{\pi^{-1}(U)}$. This is immediate from the definition of a dualizing sheaf. So to simplify notation, we may replace $S$ by $\Spec R$ and assume that we have a finite morphism $\rho:X\to\bb{P}_R^1$.

Secondly, the dualizing sheaf of $\bb{P}_R^1\to\Spec R$ is equal to its canonical sheaf $\Omega_{\bb{P}_R^1/R}$ (e.g. \cite[Proposition 6.4.22]{Liu2002}). Since $\Omega_{\bb{P}_R^1/R}$ is locally free, it follows from \cite[Proposition 6.4.26]{Liu2002} that \[\omegaa_\pi=\omegaa_\rho\otimes_{\roi_X}\rho^*\Omega_{\bb{P}_R^1/R}.\] With $A$ and $R[T]$ as above, now restrict $\omegaa_\pi$ to $\Spec A$ to get 
\[\omegaa_\pi|_{\Spec A}=\omegaa_\rho|_{\Spec A}\otimes_{\roi_{\Spec A}}\rho^*\Omega_{\bb{A}_R^1/R}.\] Moreover, $\omegaa_\rho|_{\Spec A}$ is the dualizing sheaf of the finite morphism $\Spec A\to\Spec R[T]$, and this corresponds to the codifferent $\frak{C}(A/R[T])$ according to the discussion at the start of the appendix. Finally, $\rho^*\Omega_{\bb{A}_R^1/R}$ corresponds to the $A$-module $AdT=\Omega_{R[T]/R}\otimes_{R[T]} A$, completing the proof.
\end{proof}

All results are now in place to allow us to easily give the explicit representation of the dualizing sheaf $\omegaa_\pi$ of our arithmetic surface in terms of residues:

\begin{theorem}\label{main_theorem}
The dualizing sheaf $\omegaa_\pi$ of $X\to S$ is explicitly given by, for open $U\subseteq X$, \begin{align*}\omegaa_\pi(U)=\{\omega\in\Omega_{K(X)/K}:&\RRES_{x,y}(f\omega)\in\hat{\roi}_{K,\pi(x)}\mbox{ for}\\& \mbox{all } x\in y\subset U\mbox{ and }f\in\roi_{X,y}\}\end{align*} where $x$ runs over all closed points of $X$ inside $U$ and $y$ runs over all curves containing $x$.
\end{theorem}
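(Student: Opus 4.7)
My plan is to reduce the global statement to the affine calculation already carried out in Theorem \ref{main_theorem_affine_case}, via the geometric setup of Lemmas \ref{lemma_normalization_result} and \ref{lemma_explicit_formula_using_conductor}. First I observe that both sides of the claimed identity are subsheaves of the constant sheaf $\Omega_{K(X)/K}$. For $\omegaa_\pi$ this is because Lemma \ref{lemma_explicit_formula_using_conductor} realizes it locally as $\frak{C}(A/R[T])dT \subseteq \Frac(A)\,dT = \Omega_{K(X)/K}$; for the right-hand side this is tautological. Moreover, the right-hand side is automatically a sheaf since the membership conditions are indexed by pairs $(x,y)$ of a closed point and a curve through it, and each such condition depends only on data at that pair; thus the right-hand side satisfies the sheaf axiom as a subpresheaf of a constant sheaf. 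Consequently it suffices to verify the equality on any affine open cover of $X$.

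Next I apply Lemma \ref{lemma_normalization_result} to cover $X$ by affine opens $\Spec A \subseteq X$ each of which admits a finite $\roi_K$-morphism to some $\bb{A}_R^1 = \Spec R[T]$, where $R$ is a localization of $\roi_K$ corresponding to an affine open of $S$. On such a piece, Lemma \ref{lemma_explicit_formula_using_conductor} identifies
\[
\omegaa_\pi(\Spec A) \;=\; \frak{C}(A/R[T])\,dT \;\subseteq\; \Omega_{K(X)/K}.
\]
Theorem \ref{main_theorem_affine_case}, applied with $R$ in place of $\roi_K$ (the hypotheses there are stated for an arbitrary Dedekind domain of characteristic zero with finite residue fields), then rewrites this as
\[
\omegaa_\pi(\Spec A) \;=\; W_{A/R} \;=\; \{\omega \in \Omega_{F/K} : \RRES_{x,y}(f\omega)\in\hat{\roi}_{K,s(x)} \text{ for all chains } 0\lhd y\lhd x\lhd A,\ f\in A_y\}.
\]

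Finally I match $W_{A/R}$ with the sections over $\Spec A$ of the right-hand sheaf in the theorem. The closed points of $\Spec A$ are precisely the maximal ideals $x\lhd A$; the height-one primes $y\lhd A$ contained in such an $x$ correspond bijectively to the curves $y\subset X$ whose generic point lies in $\Spec A$ and whose closure contains $x$ (every such curve restricts to a height-one prime of $A$, and conversely the closure in $X$ of a height-one prime of $A$ is an integral curve through $x$). Under this bijection the rings $\roi_{X,y}$ and $A_y$ coincide and the local residue maps agree by construction, so the conditions defining $W_{A/R}$ are exactly those defining the right-hand side of the theorem on the open $\Spec A$. Hence the two subsheaves of $\Omega_{K(X)/K}$ agree on an affine open cover and are therefore equal.

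The main substantive input has already been supplied by Theorem \ref{main_theorem_affine_case}, so no essential new difficulty remains; the only delicate step is the dictionary in the last paragraph between chains of primes $0\lhd y\lhd x\lhd A$ and geometric pairs (closed point, curve through it) in $\Spec A \subseteq X$, and this is straightforward from the correspondence between irreducible closed subschemes and primes.
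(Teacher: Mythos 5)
Your proof is correct and follows essentially the same route as the paper's: reduce to the affine charts supplied by Lemma \ref{lemma_normalization_result}, identify $\omegaa_\pi|_{\Spec A}$ with $\frak{C}(A/R[T])\,dT$ via Lemma \ref{lemma_explicit_formula_using_conductor}, and invoke Theorem \ref{main_theorem_affine_case}. The paper dispatches this in one sentence; your version simply makes explicit the two small points it leaves tacit (that the right-hand side is a subsheaf of the constant sheaf $\Omega_{K(X)/K}$, and the dictionary between chains $0\lhd y\lhd x\lhd A$ and pairs of a closed point with a curve through it, which works because any curve through $x\in\Spec A$ has its generic point inside the open set $\Spec A$). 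No gap.
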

\begin{proof}
This reduces to the affine situation of $\Spec A\subseteq X$ as above, i.e. we have a finite morphism $R[T]\into A$, with $\Spec R$ being an affine open subset contained in $\pi(\Spec A)\subseteq S$. The theorem now follows immediately from the previous lemma combined with theorem \ref{main_theorem_affine_case}, replacing $R$ by $\roi_K$ and $R[T]$ by $B$.
\end{proof}

\begin{appendix}
\section{Finite morphisms, differents and Jacobians}\label{section_finite_morphisms_etc}
The purpose of this appendix is to prove some results pertaining to the relationship between the canonical sheaf and dualizing sheaf, especially in the case of finite morphisms. In particular, we prove that for a finite morphism which is a local complete intersection, the canonical and dualizing sheaves coincide. From this we deduce the same equality for an arithmetic surface (and the proof works in far greater generality) which is a local complete intersection. This equality for local complete intersections is well known in duality theory (e.g. \cite[Corollary 19 and Proposition 22]{Kleiman1980}, but our proof is based on more explicit local calculations. Still, this material may be known to experts and the author would not be surprised to learn if a comprehensive discussion is buried somewhere in SGA or EGA\footnote{J.-P.~Serre gave a talk at Harvard's `Basic Notions' seminar, 10 November 2003, entitled ``Writing Mathematics?'', in which he explains how to write mathematics badly. He explains that if you wish to give a reference which can not be checked by the reader, then you should ideally refer, without any page references, to the complete works of Euler, but ``if you refer to SGA or EGA, you have a good chance also''. The reader interested in verifying {\em this} reference should consult timeframe 4.11--4.20 of the video at \url{http://modular.fas.harvard.edu/edu/basic/serre/}.}.

These calculations are not necessary for the main part of the paper, but are similar in spirit (and were required in an earlier version of the paper), hence have been relegated to an appendix.

Suppose that $A/B$ is a finite extension of normal, excellent rings, with corresponding fraction fields $F/M$ (assumed to be of characteristic zero to avoid any inseparability problems). The associated {\em codifferent} is the $A$-module \[\frak{C}(A/B)=\{x\in F:\Tr_{F/M}(xA)\subseteq B\}.\] If $P$ is any $A$-module, then the natural pairing \[\Hom_A(P,\frak{C}(A/B))\times P\To\frak{C}(A/B)\stackrel{\Tr_{F/M}}{\To}B\] induces a $B$-linear map \[\Hom_A(P,\frak{C}(A/B))\to\Hom_B(P,B),\] which is easily checked to be an isomorphism (using non-degeneracy of $\Tr_{F/M}$). Thus $\frak{C}(A/B)$ is exactly the Grothendieck dualizing module of the projective morphism $\Spec A\to\Spec B$. Note also that we have a natural identification (taking $P=A$): \[\frak{C}(A/B)=\Hom_B(A,B).\]

We will prove that when $A/B$ is a local complete intersection, then the codifferent is an invertible $A$-module generated (locally) by the determinants of certain Jacobian matrices:

\begin{theorem}\label{theorem_different_and_jacobian_in_general}
Assume $A/B$ is a local complete intersection. Then the codifferent $\frak{C}(A/B)$ is an invertible $A$-module (i.e. locally free of rank $1$). More precisely, if $\frak{m}$ is a maximal ideal of $A$ and we write \[A_{\frak{m}}=B[T_1,\dots,T_m]_{\frak{n}}/\langle f_1,\dots,f_m\rangle,\] where $f_1,\dots,f_m$ is a regular sequence inside a maximal ideal $\frak{n}\lhd B[T_1,\dots,T_m]$ sitting over $\frak{m}_B$, then \[\frak{C}(A/B)_{\frak{m}}=J_\frak{m}^{-1}A_{\frak{m}},\] where $J_\frak{m}:=\det\left(\dd{f_i}{T_j}\right)_{i,j}\in A_\frak{m}$.
\end{theorem}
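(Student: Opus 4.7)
The plan is to reduce the global statement to a computation in complete discrete valuation rings and then handle the resulting cases separately.

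Firstly, I would localize: since $\Hom_B$ and the trace pairing commute with localization, $\frak{C}(A/B)$ localizes to $\frak{C}(A_\frak{m}/B_{\frak{m}\cap B})$, and the Jacobian $J_\frak{m}$ is unchanged, so we may assume $B$ is local. Next I would reduce to DVRs via reflexivity. Both sides of the claimed equality are reflexive $A$-modules: $J^{-1}A$ is invertible because $J$ is a nonzerodivisor (by the Jacobian criterion applied to the regular sequence $f_1,\dots,f_m$ in the Cohen--Macaulay ring $B[T_1,\dots,T_m]_\frak{n}$), while $\frak{C}(A/B)=\Hom_B(A,B)$ is reflexive as the $B$-dual of a finite module over a normal domain. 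Since $A$ is normal, a reflexive $A$-module is determined by its localizations at height one primes, so it suffices to prove the equality at each height one prime $\frak{p}\lhd A$. The contraction $\frak{q}=\frak{p}\cap B$ has height one (by module-finiteness between normal rings of equal dimension), so $B_\frak{q}$ is a DVR; completing further reduces the problem to a finite extension of complete DVRs still presented as a complete intersection, with both the codifferent and the Jacobian respecting the completion.

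If the residue extension $\res A/\res B$ is separable, Hensel's lemma produces $\alpha\in A$ with $A=B[\alpha]$ and minimal polynomial $h(T)\in B[T]$, and the classical different formula \[\frak{C}(A/B)=h'(\alpha)^{-1}A\] (see \cite[III.\S2]{Neukirch1999}) applies. A direct calculation with the chain rule, comparing the monogenic and the multivariable presentations via the defining relations, identifies $J$ with $h'(\alpha)$ up to a unit of $A$, which finishes this case.

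The hard part will be the inseparable residue case, where $A$ is no longer monogenic over $B$ and the complete intersection presentation must be used essentially. My plan is to invoke the Koszul resolution: since $(f_1,\dots,f_m)$ is a regular sequence in $R:=\hat{B[T_1,\dots,T_m]_\frak{n}}$, the Koszul complex $K_\bullet(f_1,\dots,f_m;R)$ is a finite free resolution of $A$, producing a canonical isomorphism $\operatorname{Ext}^m_R(A,R)\cong A$ whose generator corresponds to $J^{-1}$. Matching this $\operatorname{Ext}$-theoretic identification with the trace-based codifferent $\frak{C}(A/B)$ then reduces to an explicit residue computation in imperfect-residue-field ramification theory, for which the references in the introduction (\cite{Kurihara1987}, along with the work of Abbes--Saito, Xiao, and Zhukov cited there) provide the necessary technical tools.
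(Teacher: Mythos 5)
Your reduction to the local and then to the discrete-valuation-ring case is sound, and the reflexivity argument is in fact a cleaner route than the paper's: the paper instead proves flatness of $A/B$ (Lemma \ref{lemma_flatness_of_finite_ci}) so that both $\frak{C}(A/B)$ and $J^{-1}A$ become free $B$-modules of the same rank, and then shows the change-of-basis matrix lies in $GL_n(B)$ by checking it at every height-one prime of $B$ and intersecting. Your observation that $\frak{C}(A/B)$ and $J^{-1}A$ are both divisorial fractional $A$-ideals in the normal domain $A$, hence determined by their localizations at height-one primes, bypasses the flatness lemma entirely. Your separable-residue case is also fine: monogenicity plus the classical $\frak{D}=\langle h'(\alpha)\rangle$ formula, together with independence of the Jacobian ideal from the chosen complete-intersection presentation, gives the result.

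The genuine gap is the inseparable-residue case for complete discrete valuation rings, which is the actual content of the theorem. What you propose there is not a proof: the Koszul complex gives the \emph{fundamental local isomorphism} $\operatorname{Ext}^m_R(A,R)\cong A$ with generator "corresponding to $J^{-1}$," but identifying this Ext-theoretic dualizing module with the trace-theoretic codifferent $\frak{C}(A/B)$ for a wildly ramified extension with inseparable residue field is precisely the statement you are trying to prove. Pointing to Kurihara, Abbes--Saito, Xiao, and Zhukov for "an explicit residue computation" defers exactly the step that needs to be supplied. The idea you are missing is the paper's \emph{d\'evissage through towers of monogenic extensions} (Theorem \ref{theorem_different_and_jacobian_for_cdvr} and the two lemmas preceding it): one first establishes multiplicativity of both $\frak{D}$ and $\frak{J}$ in towers $F'/F/M$ (for $\frak{J}$ this is an easy block-triangular Jacobian computation after lifting a presentation of $\roi_{F'}/\roi_F$ over $\roi_M$); then one observes that a finite Galois extension $F/M$ of CDVFs factors as $F/F_1/F_0/M$ with $F_0/M$ unramified, $F_1/F_0$ tame, and $\Gal(F/F_1)$ a $p$-group, hence nilpotent, so $F/F_1$ is a tower of degree-$p$ extensions. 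Each step in this filtration is monogenic (lift of a residue generator, a uniformiser, or a degree-$p$ step that is either totally ramified or has prime residue degree), so the classical formula applies to each, and multiplicativity gives the Galois case. The non-Galois case then follows by passing to the normal closure $F'$ and comparing the two product formulas $\nu_{F'}(\frak{D}(\roi_{F'}/\roi_M))=e_{F'/F}\nu_F(\frak{D}(\roi_F/\roi_M))+\nu_{F'}(\frak{D}(\roi_{F'}/\roi_F))$ and its analogue for $\frak{J}$. Without this devissage (or an equivalent substitute), your proof of the central case is incomplete.
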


Before the proof, we collect together some corollaries:

\begin{corollary}\label{corollary_dualizing_is_canonical_for_finite_morphism}
Assume $A/B$ is a local complete intersection. Then the dualizing module $\frak{C}(A/B)$ is naturally isomorphic to the canonical module $\omega_{A/B}$.
\end{corollary}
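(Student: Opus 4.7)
The plan is to match local generators on each side. By Theorem A.1, $\frak{C}(A/B)$ is an invertible $A$-module, with canonical local generator $J_\frak{m}^{-1}$ relative to any presentation $A_\frak{m}=P_\frak{n}/\langle f_1,\ldots,f_m\rangle$, where $P=B[T_1,\ldots,T_m]$ and $f_1,\ldots,f_m$ is a regular sequence. For a local complete intersection the canonical module is, by definition, the invertible $A$-module whose value at $\frak{m}$ is
\[
\omega_{A/B}\big|_\frak{m}=\Hom_{A_\frak{m}}\!\big(\wedge^m(I/I^2),\,\wedge^m\Omega_{P/B}\otimes_PA_\frak{m}\big),
\]
free of rank one on the symbol
\[
e_{f,T}:=(f_1\wedge\cdots\wedge f_m)^\vee\otimes(dT_1\wedge\cdots\wedge dT_m).
\]
The strategy is to define a local map sending $e_{f,T}$ to $J_\frak{m}^{-1}$, verify it does not depend on the presentation, and then glue.

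First I would fix $\frak{m}\lhd A$ together with a presentation $(f,T)$ and set
\[
\phi_{f,T}:\omega_{A/B}\big|_\frak{m}\isoto\frak{C}(A/B)_\frak{m},\qquad e_{f,T}\mapsto J_\frak{m}^{-1}.
\]
Both sides being free of rank one on these generators, $\phi_{f,T}$ is tautologically an isomorphism of $A_\frak{m}$-modules.

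The substantive step is the independence check. If $g_1,\ldots,g_m$ is another regular sequence generating $I$, write $g_i=\sum_jc_{ij}f_j$ for some $C=(c_{ij})\in M_m(P_\frak{n})$ that is invertible modulo $I$. Then in $\wedge^m(I/I^2)$ one has $g_1\wedge\cdots\wedge g_m=\det(C)\cdot f_1\wedge\cdots\wedge f_m$, so $e_{g,T}=\det(C)^{-1}e_{f,T}$. On the other hand, differentiating $g_i=\sum_jc_{ij}f_j$ with respect to $T_k$ and reducing modulo $I$ kills the $f_j\,\partial c_{ij}/\partial T_k$ summands, yielding $\partial g_i/\partial T_k\equiv\sum_jc_{ij}\,\partial f_j/\partial T_k\pmod I$; taking determinants gives $J'\equiv\det(C)\,J\pmod I$, and hence $(J')^{-1}=\det(C)^{-1}J^{-1}$ in $A_\frak{m}$. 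Therefore $\phi_{g,T}=\phi_{f,T}$. A chain-rule computation, using that $\wedge^m\Omega_{P/B}$ transforms by the Jacobian of the coordinate change while $J$ scales by the same factor, handles a change of polynomial variables $T\mapsto T'$; these two operations combined cover an arbitrary change of presentation.

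The main (mild) obstacle is precisely this naturality check --- its content being the elementary but crucial observation that under a change of defining equations the wedge $f_1\wedge\cdots\wedge f_m$ and the Jacobian $J$ scale by the same determinant, so that the ratio $e_{f,T}/J^{-1}$ is an invariant of the l.c.i.~datum. Once this is in hand, the canonical local isomorphisms $\phi_{f,T}$ patch (automatically, since both modules are invertible of rank one and the $\phi_{f,T}$ agree on overlaps) to a global natural isomorphism $\omega_{A/B}\isoto\frak{C}(A/B)$, which is what the corollary asserts.
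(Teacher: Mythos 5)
Your proof is correct and follows essentially the same route as the paper: both send the generator $e_{f,T}=\ep$ to $J_\frak{m}^{-1}$ and invoke Theorem \ref{theorem_different_and_jacobian_in_general} to identify the resulting fractional ideal $J_\frak{m}^{-1}A_\frak{m}$ with $\frak{C}(A/B)_\frak{m}$. The only difference in packaging is that the paper makes presentation-independence automatic by exhibiting $\delta^{\wedge m}=J\ep$ as a canonical element of $\omega_{A/B}$, whereas you verify by hand that $e_{f,T}$ and $J^{-1}$ transform by the same determinant under a change of defining equations --- the same fact, phrased from the generator side rather than from the canonical-element side.
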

\begin{proof}
As soon as we recall the definition and main properties of the canonical module this will be clear. For simplicity we initially assume that $A/B$ is a global complete intersection of the form \[A=B[T_1,\dots,T_m]/I=B[\underline{T}]/I,\] where $I$ is an ideal generated by a regular sequence $f_1,\dots,f_m$. In general we will only have such a representation of $A/B$ locally.

The $A$-module $I/I^2$ is free of rank $m$, with basis $f_1,\dots,f_m$ (or rather, the images of these mod $I^2$), and there is a natural exact sequence of $A$-modules \[I/I^2\stackrel{\delta}{\to}\Omega_{B[\underline{T}]/B}\otimes_{B[\underline{T}]}A\to\Omega_{A/B}\to0,\] where $\delta$ is given by \[\delta:I/I^2\to\Omega_{B[\underline{T}]/B}\otimes_{B[\underline{T}]}A,\quad b\mbox{ mod}{I}\mapsto db.\] Since $\delta(f_i)=\sum_{j=1}^m\dd{f_i}{T_j}dT_j$, the matrix of $\delta$ with respect to the bases $f_1,\dots,f_m$ and $dT_1,\dots,dT_m$ is the Jacobian matrix \[\cal{J}:=\left(\dd{f_i}{T_j}\right)_{\substack{1\le i\le m\\1\le j\le m}}\] (more precisely, the image of this matrix in $A$). Since $L/F$ is separable the Jacobian condition for smoothness asserts that $\mbox{rank}\,\cal{J}=m$. That is, $\cal{J}$ is invertible (in $GL_m(F)$) and so $\delta$ is actually injective: \[0\to I/I^2\stackrel{\delta}{\to}\Omega_{B[\underline{T}]/B}\otimes_{B[\underline{T}]}A\to\Omega_{A/B}\to0.\] Put $J=\det\cal{J}\neq 0$.

The {\em relative canonical module} $\omegaa_{A/B}$ is the invertible $A$-module \begin{align*}\omegaa_{A/B}&=\Hom_A(\det I/I^2,A)\otimes_A\det(\Omega_{B[\underline{T}]/B}\otimes_{B[\underline{T}]}A)\\&=\Hom_A(\det I/I^2,\det(\Omega_{B[\underline{T}]/B}\otimes_{B[\underline{T}]}A))\end{align*} where $\det I/I^2=\bigwedge_A^mI/I^2$ and $\det(\Omega_{B[\underline{T}]/B}\otimes_{B[\underline{T}]}A)=\bigwedge_A^m(\Omega_{B[\underline{T}]/B}\otimes_{B[\underline{T}]}A)$. The $A$-module $\omegaa_{A/B}$ is generated by the morphism $\ep$ characterised by $\ep(f_1\wedge\dots\wedge f_m)=dT_1\wedge\dots\wedge dT_m$, but it also contains a canonical element $\delta^{\wedge m}\in\omegaa_{A/B}$. Moreover, since \[\delta^{\wedge m}(f_1\wedge\dots\wedge f_m)=J dT_1\wedge\dots\wedge dT_m,\] we have the relation $\delta^{\wedge m}=J\ep$. In other words, there is a canonical isomorphism \[\omegaa_{A/B}\cong J^{-1}A,\quad a\delta^{\wedge m}\mapsto a.\] For more details, see \cite[\S6.4]{Liu2002}.

When $A$ cannot necessarily be written as a global complete intersection over $A$, then $\omegaa_{A/B}$ is defined by patching the previous construction on sufficiently small open subsets of $\Spec A$. The isomorphism just described is sufficiently canonical to patch together to give an embedding $\omegaa_{A/B}\into F$. If $\frak{m}$ is a maximal ideal of $A$ and we write $A_{\frak{m}}$ as a complete intersection over $A$, then under this identification we have $\omegaa_{A/B}A_\frak{m}=J_\frak{m}^{-1}A_{\frak{m}}$, where $J_\frak{m}$ is the determinant of the Jacobian matrix corresponding to the complete intersection. The previous theorem implies that this is nothing other than $\frak{C}(A/B)$.
\end{proof}

\begin{corollary}
Let $\pi:X\to S$ be an arithmetic surface in the sense of section \ref{section_reciprocity_for_surfaces}. Then the dualizing sheaf $\omegaa_\pi$ and relative canonical sheaf $\omegaa_{X/S}$ are isomorphic.
\end{corollary}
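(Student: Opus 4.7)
The plan is to reduce to a local affine calculation where Corollary \ref{corollary_dualizing_is_canonical_for_finite_morphism} applies. First I would, just as in the proof of Theorem \ref{main_theorem}, invoke Lemma \ref{lemma_normalization_result} to cover $X$ by affine open sets $\Spec A$, each sitting inside some $\pi^{-1}(U)$ for $U=\Spec R\subseteq S$ an affine open, and each admitting a finite injection $R[T]\hookrightarrow A$ realising $\Spec A$ as an open piece of an $\bb{A}^1_R$-finite cover of a neighbourhood of $x$. The local complete intersection assumption on $\pi$ (implicit here, since the analogue of the corollary only makes sense in that setting) means that after shrinking $\Spec A$ we may assume $A$ is an LCI finite $R[T]$-algebra of the form considered in Theorem \ref{theorem_different_and_jacobian_in_general}.

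Next I would compare the two sheaves on $\Spec A$ by expressing each as a tensor product along the factorisation $\pi|_{\Spec A}\colon \Spec A \xrightarrow{\rho} \bb{A}^1_R \to \Spec R$. On the dualizing side, Lemma \ref{lemma_explicit_formula_using_conductor} already gives
\[
\omegaa_\pi|_{\Spec A}\;\cong\;\omegaa_\rho|_{\Spec A}\otimes_{\roi_{\Spec A}}\rho^{\ast}\Omega^{1}_{\bb{A}^1_R/R}
\;=\;\frak{C}(A/R[T])\,dT.
\]
On the canonical side, the standard tower formula for canonical modules of a local complete intersection composed with a smooth morphism gives
\[
\omegaa_{X/S}|_{\Spec A}\;\cong\;\omegaa_{A/R[T]}\otimes_{A}\bigl(\Omega^{1}_{R[T]/R}\otimes_{R[T]}A\bigr)\;=\;\omegaa_{A/R[T]}\,dT.
\]
Corollary \ref{corollary_dualizing_is_canonical_for_finite_morphism} identifies $\omegaa_{A/R[T]}$ with $\frak{C}(A/R[T])$ inside $\Frac A$, and hence provides a local isomorphism $\omegaa_{X/S}|_{\Spec A}\cong\omegaa_\pi|_{\Spec A}$.

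The main obstacle will be ensuring that these local isomorphisms glue globally. For this I would exploit that both sheaves embed canonically into the constant sheaf $\Omega^{1}_{K(X)/K}$ on $X$: the sheaf $\omegaa_\pi$ by Theorem \ref{main_theorem}, and $\omegaa_{X/S}$ via the canonical map $\omegaa_{X/S}\to \Omega^{1}_{X/S}\otimes_{\roi_X}K(X)=\Omega^{1}_{K(X)/K}$, which is injective because $X$ is generically smooth over $S$ (the generic fibre is a smooth curve) and $\omegaa_{X/S}$ is locally free of rank one on the LCI $X$, hence torsion-free. Under both embeddings the local identification coincides with the submodule $\frak{C}(A/R[T])\,dT\subseteq \Omega^{1}_{K(X)/K}$, so the isomorphisms patch tautologically into a global isomorphism $\omegaa_{X/S}\isoto\omegaa_\pi$.
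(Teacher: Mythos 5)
Your argument follows the same route as the paper: reduce via Lemma \ref{lemma_normalization_result} to a finite LCI factorisation through an affine or projective line, apply the tower formula for canonical sheaves together with Lemma \ref{lemma_explicit_formula_using_conductor}, and conclude from Corollary \ref{corollary_dualizing_is_canonical_for_finite_morphism}; your use of the canonical embeddings of both sheaves into the constant sheaf $\Omega^1_{K(X)/K}$ to glue the local identifications is a welcome explicit version of a point the paper leaves implicit. The one step you elide that the paper spells out is why the finite morphism $\rho$ inherits the LCI property from $\pi$ --- the paper produces a regular embedding of an open piece of $X$ into a smooth $\bb{P}^1_S$-scheme by composing a regular immersion into $\bb{A}^r_S$ with a section $S\to\bb{P}^1_S$, a detail worth including since LCI over the base does not formally imply LCI over $\bb{A}^1_R$ without it.
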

\begin{proof}
Using lemma \ref{lemma_normalization_result} we may suppose that we have a finite morphism $\rho:X\to\bb{P}_S^1$. Functoriality of the canonical sheaf implies that $\omegaa_{X/S}\cong \omegaa_{X/\bb{P}_S^1}\otimes_{\roi_X} \rho^*\Omega_{\bb{P}_S^1/S}$ (see e.g. \cite[Theorem 6.4.9a]{Liu2002}). Since $\pi$ is a local complete intersection, so is $\rho$. Indeed, we may pick a section $\sigma:S\to\bb{P}_S^1$ which is a regular embedding, and then any regular immersion $U\to \bb{A}_S^r$, with $U$ an open subset of $X$, induces a regular embedding \[U\to\bb{A}_S^r=\bb{A}_S\times_SS\to\bb{A}_S^r\times_S\bb{P}_S^1\] (since regular embeddings are closed under composition and flat base change); this gives a regular embedding of $U$ into a smooth $\bb{P}_S^1$-scheme, as required.

The previous corollary now implies that the canonical sheaf $\omegaa_{X/\bb{P}_S^1}$ coincides with the dualizing sheaf $\omegaa_\rho$. But we already saw in lemma \ref{lemma_explicit_formula_using_conductor} that $\omegaa_\pi\cong \omegaa_\rho\otimes_{\roi_X} \rho^*\Omega_{\bb{P}_S^1/S}$, which completes the proof.
\end{proof}

\begin{remark}
The previous result requires little more than combining lemma \ref{lemma_explicit_formula_using_conductor} with corollary \ref{corollary_dualizing_is_canonical_for_finite_morphism} and some basic results on the functoriality of canonical and dualizing sheaves. It seems to hold in the generality of having a projective local complete intersection between normal, excellent schemes, and thus provides a proof in this case that the canonical and dualizing sheaves coincide without resorting to the methods of higher duality theory used in \cite{Kleiman1980}.
\end{remark}

\subsection{The case of complete discrete valuation rings}
To prove theorem \ref{theorem_different_and_jacobian_in_general}, we begin by treating the case of complete discrete valuation rings. Let $F/M$ be a finite, separable extension of complete discrete valuation fields, with rings of integers $\roi_F/\roi_M$. In place of the codifferent, one usually considers the {\em different} $\frak{D}(\roi_F/\roi_M)$, which is the $\roi_F$-fractional ideal defined by \[\frak{C}(\roi_F/\roi_M)\,\frak{D}(\roi_F/\roi_M)=\roi_F\] i.e. the complement of the codifferent. Since $\roi_F/\roi_M$ is a finite extension of regular, local rings, it is a complete intersection \[\roi_F=\roi_M[T_1,\dots,T_m]_\frak{n}/\langle f_1,\dots,f_m\rangle,\] where $f_1,\dots,f_m$ is a regular sequence contained inside a maximal ideal $\frak{n}\lhd \roi_M[T_1,\dots,T_m]$ which sits over the maximal ideal of $\roi_M$. We set \[\frak{J}(\roi_F/\roi_M)=\det\left(\dd{f_i}{T_j}\right)_{i,j}\roi_F,\] which we may as well call the {\em Jacobian ideal}. The fact that $F/M$ is separable implies that the Jacobian ideal is non-zero, and as argued in corollary \ref{corollary_dualizing_is_canonical_for_finite_morphism}, we have an exact sequence \[0\to \langle f_1,\dots,f_m\rangle/\langle f_1,\dots,f_m\rangle^2\stackrel{\delta}{\to}\Omega_{\roi_M[\underline{T}]/\roi_M}\otimes_{\roi_M[\underline{T}]}\roi_F\to\Omega_{\roi_F/\roi_M}\to0.\] The matrix of $\delta$ with respect to the bases $f_1,\dots,f_m$ and $dT_1,\dots,dT_m$ is the Jacobian matrix, and it easily follows (using the Iwasawa decomposition of $GL_m(F)$) that $\frak{J}(\roi_F/\roi_M)=\frak{p}_F^l$, where $l=\mbox{length}_{\roi_F}\Omega_{\roi_F/\roi_M}$; in particular, the Jacobian ideal does not depend on how we write $\roi_F$ as a complete intersection over $\roi_M$.

We are going to prove that \begin{equation}\frak{J}(\roi_F/\roi_M)=\frak{D}(\roi_F/\roi_M).\label{8}\end{equation} When $F/M$ is monogenic (i.e. we may write $\roi_F=\roi_M[\al]$ for some $\al\in\roi_F$), which is the case whenever the residue field extension of $F/M$ is separable, the equality (\ref{8}) is well-known; it states that $\frak{D}(\roi_F/\roi_M)=g'(\al)$, where $g$ is the minimal polynomial of $\al$ over $M$. A proof may be found in \cite[III.2]{Neukirch1999} (this reference assumes throughout that the residue field extensions are separable, but the proof remains valid in the general case).

Several easy lemmas are required, firstly a product formula:

\begin{lemma}
Let $F'$ be a finite, separable extension of $F$; then \[\frak{D}(\roi_{F'}/\roi_M)=\frak{D}(\roi_F/\roi_M)\frak{D}(\roi_{F'}/\roi_F)\] and \[\frak{J}(\roi_{F'}/\roi_M)=\frak{J}(\roi_F/\roi_M)\frak{J}(\roi_{F'}/\roi_F).\]
\end{lemma}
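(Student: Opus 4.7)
The two formulas are essentially independent and I would treat them separately.

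For the different, I would work directly with the codifferent and convert at the end via $\frak{D}=\frak{C}^{-1}$. The key input is the transitivity of trace, $\Tr_{F'/M}=\Tr_{F/M}\circ\Tr_{F'/F}$. Given $x\in F'$, observe the chain of equivalences: $x\in\frak{C}(\roi_{F'}/\roi_M)$ iff $\Tr_{F'/F}(x\roi_{F'})\subseteq\frak{C}(\roi_F/\roi_M)$ iff $\Tr_{F'/F}(xz\roi_{F'})\subseteq\roi_F$ for every $z\in\frak{D}(\roi_F/\roi_M)$ (using $F$-linearity of $\Tr_{F'/F}$ together with $\frak{C}(\roi_F/\roi_M)=\frak{D}(\roi_F/\roi_M)^{-1}$), iff $x\frak{D}(\roi_F/\roi_M)\subseteq\frak{C}(\roi_{F'}/\roi_F)$. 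Thus $\frak{C}(\roi_{F'}/\roi_M)=\frak{C}(\roi_{F'}/\roi_F)\frak{C}(\roi_F/\roi_M)$, and inverting gives the desired product formula for differents.

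For the Jacobian ideal, I would combine the complete intersection presentations. Writing
\[\roi_F=\roi_M[T_1,\dots,T_m]_{\frak{n}_F}/\langle f_1,\dots,f_m\rangle,\qquad \roi_{F'}=\roi_F[S_1,\dots,S_n]_{\frak{n}_{F'}}/\langle g_1,\dots,g_n\rangle,\]
lift each $g_i$ to $\tilde g_i\in\roi_M[\underline T,\underline S]$ and pull $\frak{n}_{F'}$ back to a maximal ideal $\frak{n}''$ of $\roi_M[\underline T,\underline S]$. A short check shows
\[\roi_{F'}=\roi_M[\underline T,\underline S]_{\frak{n}''}/\langle f_1,\dots,f_m,\tilde g_1,\dots,\tilde g_n\rangle,\]
which is again a complete intersection of the right codimension. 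Since the $f_i$ do not involve any $S_j$, the corresponding Jacobian matrix is block lower-triangular:
\[\begin{pmatrix}\partial f_i/\partial T_j & 0\\[2pt] \partial \tilde g_i/\partial T_j & \partial \tilde g_i/\partial S_j\end{pmatrix}.\]
Its determinant is $\det(\partial f_i/\partial T_j)\cdot\det(\partial \tilde g_i/\partial S_j)$; reducing modulo $\langle\underline f\rangle$ the second factor becomes $\det(\partial g_i/\partial S_j)$. Interpreting the three determinants inside $\roi_{F'}$ gives generators of $\frak{J}(\roi_{F'}/\roi_M)$, $\frak{J}(\roi_F/\roi_M)\roi_{F'}$ and $\frak{J}(\roi_{F'}/\roi_F)$ respectively, yielding the product formula.

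There is no serious obstacle: the different identity is a purely formal manipulation and the Jacobian identity is a block-matrix computation. The only mild subtlety is verifying that the two presentations really do compose to a complete intersection presentation of $\roi_{F'}/\roi_M$ with maximal ideal $\frak{n}''$ sitting correctly over $\frak{m}_M$, but this is immediate from the fact that the lengths (equivalently codimensions) add, and the independence of $\frak{J}(\cdot/\cdot)$ from the chosen presentation, already recorded before the lemma, guarantees that the identity of ideals obtained from this particular choice is the identity that was to be proved.
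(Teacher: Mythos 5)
Your proposal is correct and follows essentially the same route as the paper: for the Jacobian identity the paper uses exactly your block lower-triangular composition of the two complete-intersection presentations. For the different identity the paper simply cites Neukirch [III.2] as well known, whereas you supply a short direct proof via the codifferent and transitivity of the trace; this is a minor but self-contained addition, not a change of method.
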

\begin{proof}
The different result is well-known; see e.g. \cite[III.2]{Neukirch1999}. We will prove the Jacobian result. Write $\roi_{F'}$ as a complete intersection over $\roi_F$: \[\roi_{F'}=\roi_F[T_{m+1},\dots,T_{m+n}]_{\frak{n}'}/\langle f_{m+1},\dots,f_{m+n}\rangle\] (so here $f_{m+1},\dots,f_{m+n}$ is a regular sequence in a maximal ideal $\frak{n}'\lhd\roi_F[T_{m+1},\dots,T_{m+n}]$ which sits over the maximal ideal of $\roi_F$), and denote by $\tilde{f_i}$ a lift of the $\roi_F$ polynomials $f_i$ to $\roi_M[T_1,\dots,T_{m+n}]$, for $i=m+1,\dots,m+n$. Then \[\roi_{F'}=\roi_M[T_1,\dots,T_{m+n}]_{\frak{n}''}/\langle f_1,\dots,f_m,\tilde{f}_{m+1},\dots,\tilde{f}_{m+n}\rangle\] represents $\roi_{F'}$ as a complete intersection over $\roi_M$; here $\frak{n}''$ is the pull-back of $\frak{n}'$ via $\roi_M[T_1,\dots,T_{m+n}]\to\roi_F[T_{m+1},\dots,T_{m+n}]$. The Jacobian matrix in $\roi_{F'}$ associated to this complete intersection is
\[\left(\begin{array}{ll}
\left(\dd{f_i}{T_j}\right)_{i,j=1,\dots,m} & \quad\quad\quad0\\
\left(\dd{f_i}{T_j}\right)_{\substack{i=m+1,\dots,m+n\\j=1,\dots,m}} & \left(\dd{f_i}{T_j}\right)_{i,j=m+1,\dots,m+n}\\
\end{array}\right).\] Since the determinant of this is the product of the determinants of the two square matrices, the proof is complete.
\end{proof}

\begin{lemma}
Suppose additionally that $F/M$ is Galois. Then there exists a sequence of intermediate extensions $F=F_s>\dots>F_{-1}=M$ such that each extension $F_i/F_{i-1}$ is monogenic.
\end{lemma}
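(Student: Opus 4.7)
The plan is to construct the tower by Galois correspondence, relying on the classical criterion that a finite extension $F'/M'$ of complete discrete valuation rings is monogenic, i.e.\ $\roi_{F'}=\roi_{M'}[\alpha]$ for some $\alpha$, if and only if the residue extension $\res{F}'/\res{M}'$ is simple (generated by a single element as a field extension). I would first recall this criterion: the forward direction is immediate on passing to residues, while the converse is obtained by lifting a generator of $\res{F}'/\res{M}'$ to $\beta\in\roi_{F'}$, taking $\alpha:=\beta+\pi$ for a uniformizer $\pi$ of $F'$, and applying Nakayama's lemma to conclude that $\roi_{M'}[\alpha]=\roi_{F'}$ (the point being that $\res{\alpha}=\res{\beta}$ generates the residue ring, while suitable polynomial combinations of $\alpha$ recover $\pi$ modulo $\pi_{M'}\roi_{F'}$).

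Given the criterion, the first reduction is to dispatch the ``unramified-like'' portion of $F/M$ using the inertia subgroup. Let $I\subseteq G=\Gal(F/M)$ be the kernel of the natural action of $G$ on $\res{F}$, so that $\res{F^I}/\res{M}$ is the maximal separable subextension of $\res{F}/\res{M}$. Then $\res{F^I}/\res{M}$ is simple by the primitive element theorem, so $F^I/M$ is monogenic by the criterion and may be inserted as the first segment of the tower. Replacing $M$ by $F^I$, we may assume $G=I$ and $\res{F}/\res{M}$ is purely inseparable.

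For the remaining case I would extract the rest of the tower from a subnormal series of $G=I$ with prime-order quotients. Choosing $\{1\}=G_s\triangleleft G_{s-1}\triangleleft\cdots\triangleleft G_0=I$ with each $G_{i-1}/G_i$ cyclic of prime order, and setting $F_i=F^{G_{s-i}}$, each step $F_i/F_{i-1}$ has prime degree. By the relation $ef=[F_i:F_{i-1}]$, such a step is either totally ramified (with $e$ equal to the prime and $f=1$, so monogenic via any uniformizer) or has residue degree equal to the prime (with $e=1$; the residue extension is then simple as a prime-degree field extension, and hence monogenic by the criterion). Every step of the tower is therefore monogenic.

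The main obstacle is the existence of a prime-order subnormal series for $I$, i.e., the solvability of the inertia group. In the settings of the appendix this is essentially automatic: the ramification filtration yields $I/I_1$ cyclic of order prime to the residue characteristic $p$ and $I_1$ a pro-$p$ group whose successive ramification quotients are elementary abelian, so $I$ is solvable. In fully general complete discrete valuation rings one may need to refine a maximal subgroup chain of $G$ (not necessarily normal) and argue step-by-step, with the Galois hypothesis on $F/M$ ensuring, via Galois correspondence, that sufficiently many intermediate fields are available to realize the required decomposition of the residue extension into simple pieces.
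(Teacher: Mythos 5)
Your overall strategy matches the paper's (peel off an initial unramified piece, then build a tower of prime-degree steps and note that each prime-degree step is monogenic because either $e=1$ or $f=1$), but there is a genuine gap in how you obtain the tower, and a smaller issue in the monogenicity ``criterion'' you invoke.

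The gap is the solvability of the inertia group $I=\Gal(F/F^I)$. You justify it via ``the ramification filtration yields $I/I_1$ cyclic prime to $p$ and $I_1$ a pro-$p$ group with elementary abelian quotients.'' But the classical lower-numbering ramification filtration of Serre's \emph{Corps Locaux} is defined using a generator $\alpha$ with $\roi_F=\roi_M[\alpha]$, and its existence is proved there under the hypothesis that the residue extension is \emph{separable} --- which is precisely the hypothesis this lemma is designed to avoid, and which fails in the setting of the appendix. So appealing to the classical filtration here is essentially circular (or requires the modern Abbes--Saito/Zhukov theory, which is far heavier machinery and which you do not set up). Your last paragraph acknowledges the obstacle but does not actually close it. The paper sidesteps the filtration entirely: it peels off not only the maximal unramified subextension $F_0$ but also the maximal tamely ramified subextension $F_1$; the step $F_1/F_0$ is monogenic via a uniformizer, and then $[F:F_1]=e(F/F_1)f(F/F_1)$ is a power of $p$ because both factors are, so $\Gal(F/F_1)$ is a $p$-group and hence nilpotent. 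That is the one idea you are missing, and it replaces the dubious ramification-filtration argument with something elementary.

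A secondary issue: your stated criterion ``$\roi_{F'}/\roi_{M'}$ is monogenic iff $\res{F}'/\res{M}'$ is simple,'' proved by lifting a generator $\beta$ and taking $\alpha=\beta+\pi$, only works as argued when the residue extension is \emph{separable}: producing a uniformizer in $\roi_{M'}[\alpha]$ uses that $g'(\beta)$ is a unit for $g$ a lift of the residue minimal polynomial, which fails when that polynomial is inseparable. This does not invalidate your applications (for $F^I/M$ the residue extension is separable; for the prime-degree steps one has $e=1$ or $f=1$ and Nakayama applies directly, as the paper observes), but the criterion should not be stated in the generality you give it.
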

\begin{proof}
Let $F_0$ denote the maximal unramified subextension of $M$ inside $F$, and $F_1$ the maximal tamely ramified subextension (and set $F_{-1}=M$). Then $F/F_1$ is an extension whose residue field extension is purely inseparable, and whose ramification degree is a power of $p$ ($=$ the residue characteristic, which we assume is $>0$, for else $F_1=F$); therefore $\Gal(F/F_1)$ is a $p$-group, hence nilpotent, and so there is a sequence of intermediate fields $F=F_m>\dots>F_1$ such that each $F_i$ is a normal extension of $F_1$ and such that each step is a degree $p$ extension.

Then $\roi_{F_0}=\roi_{F_{-1}}[\theta]$ where $\theta\in\roi_{F_0}$ is a lift of a generator of $\res{F}_0/\res{M}$. Also, $\roi_{F_1}=\roi_{F_0}[\pi]$ where $\pi$ is a uniformiser of $F_1$. It remains to observe that any extension of prime degree $F_i/F_{i-1}$ is monogenic. Indeed, it is either totally ramified in which case $\roi_{F_i}=\roi_{F_{i-1}}[\pi']$ where $\pi'$ is a uniformiser of $F_i$; or else the ramification degree is $1$ and $\roi_{F_i}=\roi_{F_{i-1}}[\theta']$ where $\theta'\in\roi_{F_i}$ is a lift of a generator of the degree $p$ extension $\res{F}_i/\res{F}_{i-1}$ (which may be inseparable).
\end{proof}

Combining the previous two lemmas with the validity of (\ref{8}) in the monogenic case, we have proved that (\ref{8}) holds for any finite, Galois extension $F/M$. Now suppose that $F/M$ is finite and separable, but not necessarily normal, and let $F'$ be the normal closure of $F$ over $M$. The product formula gives us \[\nu_{F'}(\frak{D}(\roi_{F'}/\roi_M))=e_{F'/F}\nu_F(\frak{D}(\roi_F/\roi_M))+\nu_{F'}(\frak{D}(\roi_{F'}/\roi_F)),\] and similarly for $\frak{J}$. But the Galois case implies that (\ref{8}) is true for $F'/M$ and $F'/F$. We deduce $\frak{J}(\roi_F/\roi_M)=\frak{D}(\roi_F/\roi_M)$, which establishes our desired result. To summarise:

\begin{theorem}\label{theorem_different_and_jacobian_for_cdvr}
Let $F/M$ be a finite, separable extension of complete discrete valuation fields. Write $\roi_F$ as a complete intersection over $\roi_M$ as above, and let $J\in \roi_F$ be the determinant of the Jacobian matrix. Then $J\neq 0$ and \[\frak{C}(\roi_F/\roi_M)=J^{-1}\roi_F.\]
\end{theorem}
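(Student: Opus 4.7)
The statement is equivalent to the equality of fractional ideals $\frak{D}(\roi_F/\roi_M)=\frak{J}(\roi_F/\roi_M)$, where $\frak{D}$ is the different and $\frak{J}=J\roi_F$ is the Jacobian ideal from any chosen presentation of $\roi_F$ as a complete intersection over $\roi_M$. Non-vanishing of $J$, and hence invertibility of $\frak{J}$, follows from separability of $F/M$: the conormal map $\delta$ in the sequence
\[0\to \langle f_1,\dots,f_m\rangle/\langle f_1,\dots,f_m\rangle^2\stackrel{\delta}{\to}\Omega_{\roi_M[\underline{T}]/\roi_M}\otimes_{\roi_M[\underline{T}]}\roi_F\to\Omega_{\roi_F/\roi_M}\to 0\]
has matrix the Jacobian, and its generic rank equals $m$ precisely because $\Omega_{F/M}=0$. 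The same sequence shows $\frak{J}=\frak{p}_F^{\operatorname{length}\Omega_{\roi_F/\roi_M}}$, so $\frak{J}$ is independent of the presentation. Thus the whole theorem reduces to the identity $\frak{D}=\frak{J}$.

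The starting point is the classical monogenic case: when $\roi_F=\roi_M[\alpha]$ for some $\alpha$ with minimal polynomial $g(X)\in\roi_M[X]$, then $g(\alpha)$ is a regular sequence of length one, so $J=g'(\alpha)$, and the classical formula of Dedekind (valid even when the residue extension is inseparable, see \cite[III.2]{Neukirch1999}) gives $\frak{D}(\roi_F/\roi_M)=g'(\alpha)\roi_F$. This settles the monogenic case completely.

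To bootstrap from this to arbitrary finite separable $F/M$, the plan is to prove multiplicativity in towers for both invariants and then exhibit every extension as an iterated monogenic tower. The product formula for $\frak{D}$ is standard, and the corresponding product formula for $\frak{J}$ follows from the block-triangular shape of the Jacobian matrix obtained by lifting a presentation of $\roi_{F'}$ over $\roi_F$ to one of $\roi_{F'}$ over $\roi_M$ (whose determinant factors as the product of the two diagonal blocks). Given these, one first treats the Galois case: decompose $F/M$ as $M\subseteq F_0\subseteq F_1\subseteq F$ with $F_0/M$ maximal unramified, $F_1/F_0$ maximal tame, and $\Gal(F/F_1)$ a $p$-group (here $p$ is the residue characteristic). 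Each of $F_0/M$ and $F_1/F_0$ is monogenic (the former by lifting a generator of the residue extension, the latter by a uniformiser). Since $p$-groups are nilpotent, $F/F_1$ is filtered by normal extensions of prime degree $p$, each of which is monogenic (by either a uniformiser in the totally ramified step, or a lift of a residue generator in the residue-inseparable step). Multiplicativity then propagates the monogenic equality $\frak{J}=\frak{D}$ up the tower. For a non-Galois extension $F/M$, apply the product formula to the Galois closure $F'/M$ and the Galois result to both $F'/M$ and $F'/F$; subtracting valuations yields $\frak{J}(\roi_F/\roi_M)=\frak{D}(\roi_F/\roi_M)$.

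The main obstacle is the wild, residue-inseparable step of prime degree $p$, where the classical arguments assuming separable residue extensions do not immediately apply. This is circumvented precisely by invoking Dedekind's formula in the monogenic case — which holds without separability of the residue extension — combined with the observation that every prime-degree step in the tower above is monogenic, thanks to the structure theorem for complete discrete valuation fields.
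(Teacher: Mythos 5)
Your proposal is correct and follows essentially the same route as the paper: reduce to $\frak{D}=\frak{J}$, establish the monogenic case via Dedekind's formula (valid without residue separability), prove multiplicativity in towers for both ideals (the Jacobian via the block-triangular matrix), decompose a Galois extension into monogenic steps through the unramified/tame/wild filtration with the $p$-group refined into prime-degree normal steps, and pass to the Galois closure with a valuation comparison for the non-Galois case. One cosmetic slip: the regular sequence in the monogenic case is $g(X)$, not $g(\alpha)$ (which is zero), but this does not affect the argument.
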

\begin{proof}
Replacing $\frak{C}(\roi_F/\roi_M)$ by its complementary ideal $\frak{D}(\roi_F/\roi_M)$, this is what we have just proved.
\end{proof}

The previous theorem is an elementary result concerning the ramification theory of complete discrete valuation fields with imperfect residue fields \cite{Abbes2002} \cite{Abbes2003} \cite{Xiao2007} \cite{Xiao2008a} \cite{Xiao2008b} \cite{Zhukov2000} \cite{Zhukov2003}.

\subsection{The higher dimensional case}
We now generalise, in several stages, from complete discrete valuation rings to the general case. Let $A/B$ be a finite extension of normal, excellent rings as at the start of the appendix, with $A/B$ a local complete intersection. For a height one prime $y\lhd B$, the localisation $B_y$ is a discrete valuation ring, and we set $M_y=\Frac\hat{B_y}$; use similar notation for $A$.

To proceed further, we need the following result, which I learned from \cite[exercise 6.3.5]{Liu2002}:

\begin{lemma}\label{lemma_flatness_of_finite_ci}
$A$ is flat over $B$.
\end{lemma}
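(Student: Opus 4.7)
The plan is to verify flatness locally via the local criterion, using the Koszul complex associated to the complete intersection presentation. Since flatness for finitely generated modules over a Noetherian ring may be checked at each maximal ideal, I would fix a maximal ideal $\frak{m}\lhd A$, set $\frak{m}_B=\frak{m}\cap B$ (which is maximal in $B$ since $A$ is integral over $B$), and aim to show that the localisation $A_{\frak{m}}$ is flat over $B_{\frak{m}_B}$. By the local complete intersection hypothesis, we may write
\[ A_{\frak{m}}=C/\langle f_1,\dots,f_m\rangle,\qquad C:=B[T_1,\dots,T_m]_{\frak{n}}, \]
where $f_1,\dots,f_m$ is a regular sequence in $C$ lying in a maximal ideal $\frak{n}$ sitting over $\frak{m}_B$. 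The intermediate ring $C$ is obviously $B$-flat, being a localisation of a polynomial ring.

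The key step is to pass this flatness along the quotient. Because $(f_1,\dots,f_m)$ is a regular sequence in $C$, the Koszul complex $K_\bullet(f_1,\dots,f_m;C)$ is a free $C$-resolution of $A_\frak{m}$. Using $B$-flatness of $C$, this yields
\[ \Tor_i^B(A_\frak{m},B/\frak{m}_B)=H_i\bigl(K_\bullet(\bar f_1,\dots,\bar f_m;\bar C)\bigr), \]
where $\bar C=C/\frak{m}_BC$ and $\bar f_i$ denotes the image of $f_i$. By the local criterion of flatness, it suffices to show that this Tor vanishes in degree one, and for that it is enough that $(\bar f_1,\dots,\bar f_m)$ remain a regular sequence in $\bar C$.

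This last point is the heart of the argument and is where regular sequences can in general fail to descend to a fibre; however, a dimension count saves us. Since $\frak{m}_B$ is maximal, $B/\frak{m}_B$ is a field, so $\bar C=(B/\frak{m}_B)[T_1,\dots,T_m]_{\bar\frak{n}}$ is a regular local ring of dimension $m$ (all maximal ideals of a polynomial ring over a field have height equal to the number of variables). On the other hand, $\bar C/(\bar f_1,\dots,\bar f_m)=A_\frak{m}/\frak{m}_BA_\frak{m}$ is a finite-dimensional $B/\frak{m}_B$-algebra because $A$ is finite over $B$, hence zero-dimensional. Thus $m$ elements of a regular local ring of dimension $m$ cut out a zero-dimensional quotient, meaning they form a system of parameters, and in a Cohen--Macaulay (here regular) local ring any system of parameters is automatically a regular sequence. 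Combining these steps across all maximal ideals $\frak{m}\lhd A$ gives the flatness of $A$ over $B$. The only subtlety worth flagging is the stability of the regular sequence under reduction modulo $\frak{m}_B$; everything else is a routine application of the Koszul and local flatness machinery.
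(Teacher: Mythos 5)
Your argument is correct and follows essentially the same path as the paper: both localise at a maximal ideal of $A$, reduce to showing that the images $\bar f_1,\dots,\bar f_m$ form a regular sequence in the fibre ring $\bar C=k(\frak{m}_B)[T_1,\dots,T_m]_{\bar{\frak{n}}}$, and then invoke the local criterion of flatness. The differences are ones of packaging: you prove the regular-sequence claim directly by noting that the $\bar f_i$ cut out a zero-dimensional quotient of the $m$-dimensional regular (hence Cohen--Macaulay) local ring $\bar C$ and therefore form a system of parameters, whereas the paper argues by contradiction (taking $f_s$ to be the first zero-divisor, observing that the intermediate quotient is Cohen--Macaulay so that killing $f_s$ does not drop dimension, and deriving $\dim\bar C\le m-1$); and you compute $\Tor_1^B(A_{\frak m},B/\frak{m}_B)$ in one shot via the Koszul resolution, while the paper applies \cite[Theorem 22.5]{Matsumura1989} one variable at a time. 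Both arguments rest on the identical dimension count and the Cohen--Macaulay property of $\bar C$; yours is arguably the cleaner packaging, though one should note explicitly (as you gesture at) that the Koszul complex on $C$ is a \emph{flat} $B$-resolution of $A_{\frak m}$ precisely because $C$ is $B$-flat, which is what licenses using it to compute $\Tor^B$.
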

\begin{proof}
Let $\frak{q}\lhd A$ be a maximal ideal of $A$, and set $\frak{p}=B\cap\frak{q}$. Since $A$ is a local complete intersection over $B$, we may therefore write \[A_{\frak{q}}=B_{\frak{p}}[T_1,\dots,T_m]_{\frak{n}}/\langle f_1,\dots,f_m\rangle,\] where $\frak{n}$ is a maximal ideal of $B[T_1,\dots,T_m]$ which sits over $\frak{p}$, and where $f_1,\dots,f_m\in B_{\frak{p}}[T_1,\dots,T_m]$ is a regular sequence inside $\frak{n}$. We will also write $\frak{n}$ for the image of $\frak{n}$ in $k(\frak{p})[T_1,\dots,T_m]$.

We will first prove that (the images of) $f_1,\dots,f_m$ form a regular sequence in $k(\frak{p})[T_1,\dots,T_m]_{\frak{n}}$. Well, if they do not, then pick $s$ minimally so that $f_s$ is a zero divisor in \[k(\frak{p})[T_1,\dots,T_m]_{\frak{n}}/\langle f_1,\dots,f_{s-1}\rangle.\] This latter ring (call it $R$) is the quotient of a regular, local ring by a regular sequence (by minimality of $s$), and hence is Cohen-Macaulay \cite[\S21]{Matsumura1989}. Any Cohen-Macaulay local ring contains no embedded primes (and so the zero-divisor $f_s$ belongs to a minimal prime of $R$) and is equi-dimensional \cite[Corollaries 18.10 and 18.11]{Eisenbud1995}; together these imply that $\dim R=\dim R/\langle f_s\rangle$. Quotienting out by any other $f_i$ drops the dimension by at most one (by Krull's principal ideal theorem), so we deduce \[\dim k(\frak{p})[T_1,\dots,T_m]_{\frak{n}}/\langle f_1,\dots,f_m\rangle\ge\dim k(\frak{p})[T_1,\dots,T_m]_{\frak{n}}-(m-1).\] But the ring on the left is a localisation of the fibre $A\otimes_B k(\frak{p})$, which is a finite dimensional $k(\frak{p})$-algebra, and so is zero-dimensional. Hence $\dim k(\frak{p})[T_1,\dots,T_m]_{\frak{n}}\le m-1$, contradicting the fact that ${\frak{n}}$ is a maximal ideal of $k(\frak{p})[T_1,\dots,T_m]$.

Secondly, since $B_{\frak{p}}\to B[T_1,\dots,T_m]_{\frak{n}}$ is a flat map of local rings, and $f_1$ is not a zero-divisor in $k(\frak{p})[T_1,\dots,T_m]_{\frak{n}}$, a standard criterion (e.g. \cite[Theorem 22.5]{Matsumura1989}) implies that $B_{\frak{p}}\to B[T_1,\dots,T_m]_{\frak{n}}/\langle f_1 \rangle$ is flat. Applying this criterion another $m-1$ times, we deduce that $B_{\frak{p}}\to B[T_1,\dots,T_m]_{\frak{n}}/\langle f_1,\dots,f_m \rangle=A_{\frak{q}}$ is flat.

It is enough to check flatness at the maximal ideals of $A$, so we have finished.
\end{proof}

As we saw at the start of the appendix, there is a natural isomorphism \[\frak{C}(A/B)\isoto\Hom_B(A,B),\quad x\mapsto \Tr_{F/M}(x\,\cdot).\] For any maximal ideal $\frak{m}\lhd B$, the localisation $A_{\frak{m}}$ is a flat (by the previous lemma), hence free, $B_{\frak{m}}$-module of rank $n=|F:M|$; the importance of this is that it implies, using the above isomorphism, that $\frak{C}(A/B)_{\frak{m}}$ is a free $B_{\frak{m}}$-module of rank $n$.

Let us now temporarily suppose that $A$, $B$ are both {\em local} rings. Then we may write \[A=B[T_1,\dots,T_m]_{\frak{n}}/\langle f_1,\dots,f_m\rangle,\] where $\frak{n}$ is a maximal ideal of $B[T_1,\dots,T_m]$ which sits over $\frak{m}_B$, and where $f_1,\dots,f_m\in \frak{n}$ is a regular sequence. In this case, we have:

\begin{lemma}
Suppose that $A$, $B$ are local rings. Let $J\in A$ be the determinant of the Jacobian matrix $\left(\dd{f_i}{T_j}\right)_{i,j}$. Then \[\frak{C}(A/B)=J^{-1}A.\]
\end{lemma}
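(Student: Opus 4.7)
The approach is to reduce to the complete discrete valuation ring case already handled in theorem \ref{theorem_different_and_jacobian_for_cdvr}. I would view both $\frak{C}(A/B)$ and $J^{-1}A$ as fractional $A$-ideals inside $F$, show that each is a reflexive rank-one $A$-module, and then verify the equality after localization at every height-one prime $\frak{p}\lhd A$.

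First I would verify reflexivity of each side. For $J^{-1}A$ this is immediate: it is principal as a fractional ideal, provided $J\neq 0$ in $F$, and the latter follows from the Jacobian criterion applied to the separable extension $F/M$, exactly as recorded in the proof of corollary \ref{corollary_dualizing_is_canonical_for_finite_morphism}. For $\frak{C}(A/B)\cong\Hom_B(A,B)$ I would use lemma \ref{lemma_flatness_of_finite_ci}: $A$ is $B$-flat and, being finite, projective over $B$, so $\Hom_B(A,B)$ is a finitely generated projective $B$-module. Since $A/B$ is a finite ring map, depth computed over $A$ agrees with depth computed over $B$, so this $A$-module is $S_2$, hence reflexive over the normal domain $A$.

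Next, fix a height-one prime $\frak{p}\lhd A$ and set $\frak{q}=B\cap\frak{p}$, which is height one in $B$ by going-up/down for the finite extension $A/B$ of normal domains. I would pass to the flat base change $B\to\hat{B}_\frak{q}$. Since $A$ is finite $B$-flat, $\Hom$ commutes with this base change, giving
\[
\frak{C}(A/B)\otimes_B\hat{B}_\frak{q}=\frak{C}\bigl(A\otimes_B\hat{B}_\frak{q}\,/\,\hat{B}_\frak{q}\bigr).
\]
The ring $A\otimes_B\hat{B}_\frak{q}$ is finite over the CDVR $\hat{B}_\frak{q}$, hence decomposes as a finite product $\prod_{\frak{p}'|\frak{q}}\hat{A}_{\frak{p}'}$ of CDVRs, and the codifferent splits accordingly. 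The complete intersection presentation of $A$ over $B$ base-changes to a CI presentation of $A\otimes_B\hat{B}_\frak{q}$ with the same defining equations $f_1,\dots,f_m$, and localizing/completing at each $\frak{p}'$ produces a CI presentation of $\hat{A}_{\frak{p}'}$ over $\hat{B}_\frak{q}$ whose Jacobian is the image of the original $J$. Theorem \ref{theorem_different_and_jacobian_for_cdvr} then yields $\frak{C}(\hat{A}_{\frak{p}'}/\hat{B}_\frak{q})=J^{-1}\hat{A}_{\frak{p}'}$ in each factor, and faithful flatness of completion lets me contract this equality back to $A_\frak{p}$. Reflexivity then upgrades agreement at all height-one primes to the global equality $\frak{C}(A/B)=J^{-1}A$.

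\textbf{Main obstacle.} The main technical point is tracking the Jacobian through the flat base change and the subsequent decomposition into a product of CDVRs: one must check that $A\otimes_B\hat{B}_\frak{q}$ is genuinely presented as a complete intersection $\hat{B}_\frak{q}[T_1,\dots,T_m]_{\tilde{\frak{n}}}/\langle f_1,\dots,f_m\rangle$ with the same equations (so that partial derivatives, and hence $J$, are preserved), and that the further localization/completion at each maximal ideal $\frak{p}'$ of the semi-local ring still presents $\hat{A}_{\frak{p}'}$ as a complete intersection whose Jacobian is $J$ up to a unit. The commutation of $\Hom$ with the flat base change and the $S_2$/reflexivity descent are the other two places where care is needed, but both are standard once the setup is in place.
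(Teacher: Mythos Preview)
Your approach is correct and follows the same core strategy as the paper: reduce to the complete discrete valuation ring case (theorem \ref{theorem_different_and_jacobian_for_cdvr}) by localizing at height-one primes, then globalize using normality.

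The only notable difference is in the globalization step. The paper localizes at height-one primes $y\lhd^1 B$ rather than $\frak{p}\lhd^1 A$ (equivalent, since $A/B$ is finite between normal domains), obtains $\frak{C}(A/B)A_y = J^{-1}A_y$, and then uses the fact that both $\frak{C}(A/B)$ and $J^{-1}A$ are \emph{free $B$-modules of rank $n=|F:M|$} (from lemma \ref{lemma_flatness_of_finite_ci} and the $\Hom$ description of the codifferent): choosing bases yields $\tau\in GL_n(M)$ with $\tau\frak{C}(A/B)=J^{-1}A$, the height-one equality forces $\tau\in GL_n(B_y)$ for all $y$, and normality of $B$ gives $\tau\in GL_n(B)$. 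Your reflexivity/$S_2$ argument over $A$ is the dual version of the same idea and is equally valid; the paper's matrix argument is perhaps more elementary in that it avoids the depth-transfer step you flag, while your phrasing makes clearer why the result is intrinsic to $A$ rather than to the choice of $B$. Your ``main obstacle'' about tracking the Jacobian through base change is also present in the paper's proof (hidden in the line ``by theorem \ref{theorem_different_and_jacobian_for_cdvr}, $d_{Y/y}=\nu_Y(J)$'') and is handled the same way: the CI presentation base-changes verbatim.
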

\begin{proof}
First note that since $F/M$ is separable, the extension $A/B$ is generically smooth and therefore $J\neq 0$. For any $y\lhd\!^1B$, it is clear that $\frak{C}(A_y/B_y)=\frak{C}(A/B)A_y$ where $A_y=(B\setminus y)^{-1}A$, which is a Dedekind domain. A standard formula for finite extensions of Dedekind domains \cite{Neukirch1999} states \[\frak{C}(A_y/B_y)=\prod_{0\neq Y\lhd A_y}Y^{-d_{Y/y}},\] where $d_{Y/y}=\nu_Y(\frak{D}(\hat{A}_Y/\hat{B}_y))$ (here $\nu_Y$ denotes the discrete valuation on $F_Y$). But by theorem \ref{theorem_different_and_jacobian_for_cdvr}, $d_{Y/y}=\nu_Y(J)$. Therefore \[\frak{C}(A_y/B_y)=J^{-1}A_y.\] 

According to the remarks above, both $\frak{C}(A/B)$ and $J^{-1}A$ are free $B$-modules of rank $n=|F:M|$, so if we pick a basis for $F$ as a vector space over $M$ and interpret both these free modules as submodules of $M^n$, then there exists a $\tau\in GL_n(M)$ such that $\tau\frak{C}(A/B)=J^{-1}A$. However, for any height one-prime $y\lhd B$, we have just seen that $\frak{C}(A/B)A_y=J^{-1}A_y$, implying that $\tau\in GL_n(B_y)$. Since $B$ was assumed to be normal, $B=\bigcap_{y\lhd\!^1B}B_y$ and so $\tau\in GL_n(B)$; therefore $\tau\frak{C}(A/B)=\frak{C}(A/B)$, which completes the proof.
\end{proof}

Let us now continue to suppose that $B$, but not necessarily $A$, is local. Being a finite extension of the local ring $B$, $A$ is at least a semi-local ring (i.e. has only finitely many maximal ideals), and $A\otimes_B\hat{B}=\bigoplus_{\frak{m}}\hat{A_{\frak{m}}}$, where $\frak{m}$ varies over the maximal ideals of $A$. Each completion $\hat{A_{\frak{m}}}$ is a finite extension of $\hat{B}$.

Since $\hat{B}$ is flat over $B$, we have
\begin{align*}
\Hom_B(A,B)\otimes_B\hat{B}
	&=\Hom_{\hat{B}}(A\otimes_B\hat{B},\hat{B})\\
	&=\Hom_{\hat{B}}(\bigoplus_{\frak{m}}\hat{A_{\frak{m}}},\hat{B})\\
	&=\bigoplus_{\frak{m}}\Hom_{\hat{B}}(\hat{A_{\frak{m}}},\hat{B}).
\end{align*}
Replacing each module of homomorphisms by the relevant codifferent gives us a natural isomorphism of $\hat{A}=A\otimes_B\hat{B}$-modules: \begin{equation}\frak{C}(A/B)\otimes_B\hat{B}\cong\bigoplus_{\frak{m}} \frak{C}(\hat{A_{\frak{m}}}/\hat{B}).\label{9}\end{equation} Now fix one such maximal ideal $\frak{m'}$ of $A$. For any other $\frak{m}\neq\frak{m}'$, we have $\frak{C}(\hat{A_{\frak{m}}}/\hat{B})\otimes_{\hat{A}}\hat{A_{\frak{m'}}}=0$. Indeed, there exists $s\in\frak{m'}\setminus\frak{m}$; then multiplication by $s$ is an automorphism of the finitely generated $\hat{A}$-module $\frak{C}(\hat{A_{\frak{m}}}/\hat{B})$, so Nakayama's lemma implies $\frak{C}(\hat{A_{\frak{m}}}/\hat{B})\otimes_{\hat{A}}\hat{A_{\frak{m'}}}=0$. Tensoring (\ref{9}) by $-\otimes_A A_{\frak{m'}}$ now reveals that $\frak{C}(A/B)\otimes_A\hat{A_{\frak{m'}}}=\frak{C}(\hat{A_{\frak{m'}}}/\hat{B})$.

We are now sufficiently prepared to prove our main `different$=$Jacobian' result; we return to the setting of having a finite extension $A/B$ of excellent, normal rings which is a local complete intersection:

\begin{proof}[Proof of theorem \ref{theorem_different_and_jacobian_in_general}]
We use the notation from the statement of the theorem. Let $\frak{m}$ be a fixed maximal ideal of $A$; we are free to localise $B$ at $B\cap \frak{m}$, so henceforth we will assume $B$ is local. Then \[\hat{A_{\frak{m}}}=\hat{B}[T_1,\dots,T_m]_{\frak{n}}/\langle f_1,\dots,f_m\rangle\] represents $\hat{A_{\frak{m}}}$ as a finite complete intersection over $\hat{B}$ and so we may apply the previous lemma to deduce $\frak{C}(\hat{A_{\frak{m}}}/B)=J_{\frak{m}}^{-1}\hat{A_{\frak{m}}}$.

By the discussion above ($\frak{m}'$ is now $\frak{m}$), $\frak{C}(\hat{A_{\frak{m}}}/B)=\frak{C}(A/B)\otimes_A\hat{A_{\frak{m}}}$, and so from the faithful flatness of $\hat{A_{\frak{m}}}$ over $A_\frak{m}$, we deduce $\frak{C}(A/B)_{\frak{m}}=J_{\frak{m}}^{-1}A_\frak{m}$, as required.
\end{proof}
\end{appendix}

\end{document}